    \newtheorem{definition}{Definition}
    \newtheorem{lemma}[definition]{Lemma}
    \newtheorem{assumption}[definition]{Assumption}
    \newtheorem{theorem}[definition]{Theorem}
    \newtheorem{proposition}[definition]{Proposition}
    \newtheorem{corollary}[definition]{Corollary}
    \theoremstyle{remark}
    \newtheorem{example}[definition]{Example}
    \newtheorem{remark}[definition]{Remark}
    \def\thm@space@setup{\thm@preskip=0.5cm   \thm@postskip=0.5cm}
    \newcommand{\N}{\mathbb{N}}
    \newcommand{\R}{\mathbb{R}}
    \newcommand{\C}{\mathbb{C}}
    \newcommand{\E}{\mathrm{e}}
    \newcommand{\I}{\mathrm{i}}
    \newcommand{\esper}{\mathbb{E}}
    \newcommand{\proba}{\mathbb{P}}
    \newcommand{\leb}{\mathrm{L}}
    \newcommand{\dkol}{d_\mathrm{Kol}}
    \newcommand{\testf}{\mathscr{T}}
    \newcommand{\card}{\mathrm{card}}
    \newcommand{\XEC}{\mathbf{X}}
    \newcommand{\YEC}{\mathbf{Y}}
    \newcommand{\XIEC}{\boldsymbol{\xi}}
    \newcommand{\hypergeom}[3]{{}_2F_1\binom{{#1},{#2}}{#3}}
    \newcommand{\eps}{\varepsilon}
    \newcommand{\spa}{\mathfrak{X}}
    \newcommand{\gauss}{\mathcal{N}_{\R}(0,1)}
    \newcommand{\lle}{\left[\!\left[} 
    \newcommand{\rre}{\right]\!\right]} 
    \newcommand{\scal}[2]{\left\langle #1\vphantom{#2}\,\right |\left.#2 \vphantom{#1}\right\rangle}
    \newcommand{\DD}[1]{\,d\hspace*{-0.3mm}{#1}}
    \DeclareMathOperator{\Var}{Var}
\setlist[enumerate]{itemsep=10pt,topsep=10pt}
\setlist[itemize]{itemsep=5pt,topsep=5pt}
\title{Local limit theorems and mod-$\phi$ convergence}
\author{Martina Dal Borgo \and Pierre-Loïc Méliot \and Ashkan Nikeghbali}
\begin{document}

\begin{abstract}
We prove local limit theorems for mod-$\phi$ convergent sequences of random variables, $\phi$ being a stable distribution. In particular, we give two new proofs of the local limit theorem stated in \cite{DKN15}: one proof based on the notion of \emph{zone of control} introduced in \cite{FMN17}, and one proof based on the notion of \emph{mod-$\phi$ convergence in $\leb^1(\I\R)$}. These new approaches allow us to identify the infinitesimal scales at which the stable approximation is valid. We complete our analysis with a large variety of examples to which our results apply, and which stem from random matrix theory, number theory, combinatorics or statistical mechanics.
\end{abstract}

\maketitle

\tableofcontents

\clearpage

\section{Introduction}

Let $(Y_n)_{n \in \N}$ be a sequence of random variables which admit a limit in law $Y_\infty$ as $n$ goes to infinity. We assume that the distribution of $Y_\infty$ is absolutely continuous with respect to the Lebesgue measure; thus, there is a density $p(x)$ such that 
$$\proba[Y_\infty\in (a,b)] = \int_a^b p(x)\DD{x}.$$
The convergence in law $Y_n \rightharpoonup Y_\infty$ amounts then to
$$\lim_{n \to \infty} \proba[Y_n\in (a,b)] = \int_a^b p(x)\DD{x}.$$
for any $a<b$. In this setting, a \emph{local limit theorem} for the sequence $(Y_n)_{n \in \N}$ is a statement of the following form: for some sequence $(s_n)_{n \in \N}$ going to $+\infty$, and any $x,a,b \in \R$,
\begin{equation}
\lim_{n \to \infty} s_n\,\,\proba\!\left[Y_n-x \in \frac{1}{s_n}(a,b)\right] =  p(x)\,(b-a).\label{eq:locallimit1}
\end{equation}
Thus, we are interested in the asymptotics of the probability for $Y_n$ to fall in a small interval of size $(s_n)^{-1}$. More generally, given a bounded measurable subset $B$ whose boundary $\partial B$ has Lebesgue measure $m(\partial B)=0$, we want to prove that for some scales $s_n \to +\infty$,
\begin{equation}
\lim_{n \to \infty} s_n\,\,\proba\!\left[Y_n-x \in \frac{1}{s_n}\,B\right] =  p(x)\,m(B).\label{eq:locallimit2}
\end{equation}
Notice that the convergence in law $Y_n \rightharpoonup Y_\infty$ does not imply this kind of result. Besides, for many convergent sequences $(Y_n)_{n\in \N}$, there exist some scales $(s_n)_{n\in \N}$ for which the probability on the left-hand side of Equation \eqref{eq:locallimit2} cannot be equivalent to $p(x)\,m(B)/s_n$. For instance, if $Y_n = \frac{N_n}{s_n}$ is a renormalisation of an \emph{integer-valued} statistics $N_n$, then at the scale $s_n$, Equation \eqref{eq:locallimit1} cannot be true, because $a,b \mapsto \proba[N_n \in (a,b)]$ is not continuous in $a$ and $b$. The goal of this paper is to show that in the setting of mod-$\phi$ convergent sequences, there is a large range of scales $(s_n)_{n \in \N}$ for which the local limit theorem is satisfied.

\subsection{Mod-\texorpdfstring{$\phi$}{phi} convergence}
We start by recalling the notion of mod-$\phi$ convergent sequences, which has been introduced in \cite{JKN11,DKN15,FMN16}.

\begin{definition}
Let $D\subseteq \C$ be a subset of the complex plane containing $0$, and $(X_n)_{n\in\N}$ be a sequence of real-valued random variables whose moment generating functions $\esper[\E^{zX_n}]$ are well defined over $D$. We also fix a non-constant infinitely divisible distribution $\phi$ whose Laplace transform is well defined over $D$ and has L\'evy exponent $\eta$:
\begin{equation*}
\forall z\in D,\qquad \int_{\R}\E^{zx}\,\phi(\!\DD{x})=\E^{\eta(z)}.
\end{equation*}
We then say that $(X_n)_{n\in\N}$ converges mod-$\phi$ over $D$ with parameters $(t_n)_{n \in \N}$ and limiting function $\psi:D\rightarrow \C$ if, locally uniformly on $D$,
\begin{equation*}
\psi_n(z):=\esper\!\left[\E^{zX_n}\right]\,\E^{-t_n\eta(z)}\longrightarrow\psi(z).
\end{equation*}
Here, $(t_n)_{n\in\N}$ is some deterministic sequence of positive numbers with $\lim_{n \to \infty} t_n =+\infty$, and  $\psi(z)$ is a continuous function on $D$ such that $\psi(0)=1$. 
\end{definition}

Let us comment a bit this definition with respect to the choice of the domain. If $D=\I\R$, then we are looking at Fourier transforms, or ratios thereof. So, there is no problem of definition of these quantities (recall that the Fourier transform $\E^{\eta(\I \xi)}$ of an infinitely divisible distribution does not vanish, see \cite[Lemma 7.5]{Sato99}). We shall then simply speak of mod-$\phi$ convergence (or mod-$\phi$ convergence in the Fourier sense if we want to be precise), and denote
$$\theta_n(\xi):= \esper\!\left[\E^{\I \xi X_n}\right]\,\E^{-t_n\eta(\I \xi)}$$
and $\theta(\xi) = \lim_{n \to \infty} \theta_n(\xi)$. On the other hand, if $D$ is an open subset containing $0$, then the Laplace transforms must be analytic functions on this domain. Mod-$\phi$ convergence on such a domain, or even on the whole complex plane $\C$ occurs often when the reference law $\phi$ is the standard Gaussian distribution, with density 
$$\frac{1}{\sqrt{2\pi}}\,\E^{-\frac{x^2}{2}}$$
and with Lévy exponent $\eta(z)=\frac{z^2}{2}$. In this paper, we shall consider domains of convergence $D$ equal to $\I \R$, or $\C$, or $\R$ (in Section \ref{subsec:curieweiss}). In some cases, we shall also require that the local uniform convergence of the residues $\psi_n \to \psi$ or $\theta_n \to \theta$ occurs in $\leb^1(D)$; see Definitions \ref{def:L1modphi} and \ref{def:L1R}. 
\medskip

\subsection{Stable distributions on the real line}
In this article, we shall only be interested in the case where $\phi$ is a stable distribution:
\begin{definition}
Let $c>0$, $\alpha \in (0,2]$ and $\beta \in [-1,1]$. 
The stable distribution of parameters $(c,\alpha,\beta)$ is the infinite divisible law $\phi=\phi_{c,\alpha,\beta}$ whose Fourier transform $\int_{\R}\E^{\I \xi x}\phi(dx)=\E^{\eta(\I \xi)}$
has L\'evy exponent $\eta=\eta_{c,\alpha,\beta}$  given by
$$
\eta_{c,\alpha,\beta}(\I\xi)=-\left|c\xi\right|^\alpha\left(1-\I\beta h(\alpha,\xi)\,\mathrm{sgn}(\xi)\right),
$$
where $\mathrm{sgn}(\xi)$ is the sign of $\xi$, and
\begin{equation*}
 h(\alpha,\xi)=\begin{cases}
 \tan\left(\frac{\pi \alpha}{2}\right),\qquad \text{if}\ \alpha\not =1,\\
 -\frac{2}{\pi}\log\left|\xi\right|,\quad\ \text{if}\ \alpha=1.
 \end{cases}
\end{equation*}
\end{definition}

We refer to \cite[Chapter 3]{Sato99} for details on stable distributions, see in particular Theorem 14.15 in \emph{loc.~cit.}~for the formula for the Lévy exponents. Using the above definition, one sees that the Lévy exponent of a stable law satisfies the scaling property:
\begin{equation*}
t\eta_{c,\alpha,\beta}\left(\frac{\I\xi}{t^{1/\alpha}}\right)=\begin{cases}
 \eta_{c,\alpha,\beta}(\I\xi)&\text{if } \alpha\neq 1,\\
 \eta_{c,\alpha,\beta}(\I\xi)-\left(\frac{2c\beta}{\pi}\log t\right)\I\xi &\text{if } \alpha=1.
 \end{cases}
\end{equation*}
Stable distributions include:
\begin{itemize}
    \item the standard Gaussian law, corresponding to the triplet $(c,\alpha,\beta)=\left (\frac{1}{\sqrt{2}},2,0\right)$;
    \item the standard Cauchy law, corresponding to the triplet  $(c,\alpha,\beta)=\left (1,1,0\right)$ ;
    \item the standard L\'evy law, corresponding to the triplet  $(c,\alpha,\beta)=\left (1,\frac{1}{2},1\right)$.
\end{itemize}
Note that, since $|\E^{\eta_{c,\alpha,\beta}(\I\xi)}|= \E^{-\left|c\xi\right|^\alpha}$ is integrable, any stable law  is absolutely continuous with the respect to the Lebesgue measure. Throughout this article, we denote by $p_{c,\alpha,\beta}(x)\DD{x}$ the density of the stable distribution $\phi_{c,\alpha,\beta}$. On the other hand, it is an easy exercise to show that the definition of mod-stable convergence together with the scaling property of the L\'evy exponent imply the following proposition (see \cite[Proposition 1.3]{FMN17}).

\begin{proposition}\label{prop:convlaw}
If $(X_n)_{n\in\N}$ converges mod-$\phi_{c,\alpha,\beta}$ (in the Fourier sense on $D=\I \R$) with parameters $(t_n)_{n \in \N}$ and limiting function $\theta$, then
\begin{equation*}
Y_n:=\begin{cases}
\frac{X_n}{(t_n)^{1/\alpha}}& \text{if } \alpha\neq 1,\\
\frac{X_n}{t_n}-\frac{2c\beta}{\pi}\log t_n&\text{if } \alpha=1
\end{cases}
\end{equation*}
converges in law towards $\phi_{c,\alpha,\beta}$.
\end{proposition}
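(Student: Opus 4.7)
The plan is to apply Lévy's continuity theorem: it suffices to check that the characteristic function of $Y_n$ tends pointwise on $\R$ to $\E^{\eta_{c,\alpha,\beta}(\I\xi)}$, which is the characteristic function of $\phi_{c,\alpha,\beta}$.

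Starting from the defining identity of mod-$\phi$ convergence, $\esper[\E^{\I\xi X_n}] = \theta_n(\xi)\,\E^{t_n\eta(\I\xi)}$, I would substitute $\xi \mapsto \xi/(t_n)^{1/\alpha}$ to access the characteristic function of $X_n/(t_n)^{1/\alpha}$. In the case $\alpha \neq 1$ this yields directly
\begin{equation*}
\esper[\E^{\I\xi Y_n}] \;=\; \theta_n\!\left(\tfrac{\xi}{(t_n)^{1/\alpha}}\right) \, \E^{t_n\eta(\I\xi/(t_n)^{1/\alpha})} \;=\; \theta_n\!\left(\tfrac{\xi}{(t_n)^{1/\alpha}}\right) \E^{\eta(\I\xi)},
\end{equation*}
where the last equality is the scaling property of the Lévy exponent. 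In the case $\alpha = 1$, the analogous computation produces an additional factor involving $\I\xi \log t_n$, and the explicit recentering by $-\frac{2c\beta}{\pi}\log t_n$ built into the definition of $Y_n$ is chosen precisely so as to cancel this logarithmic drift against the non-trivial term appearing in the scaling identity for $\alpha = 1$. After this cancellation, the exponential factor is once again reduced to $\E^{\eta(\I\xi)}$.

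It then remains only to verify that the prefactor $\theta_n(\xi/(t_n)^{1/\alpha})$ tends to $1$. Since $t_n \to +\infty$, the argument $\xi/(t_n)^{1/\alpha}$ tends to $0$ for any fixed $\xi$; combined with the local uniform convergence $\theta_n \to \theta$ on a neighbourhood of the origin and the normalization $\theta(0)=1$, this yields the required convergence. Putting the pieces together, $\esper[\E^{\I\xi Y_n}] \to \E^{\eta(\I\xi)}$ pointwise, and Lévy's continuity theorem concludes.

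The main (indeed only) subtlety is the bookkeeping of the logarithmic drift in the $\alpha = 1$ case; the rest is essentially mechanical manipulation of the mod-$\phi$ identity together with the scaling of $\eta_{c,\alpha,\beta}$. I do not expect any serious obstacle.
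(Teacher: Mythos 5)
Your approach is the standard (and essentially the only) one, and it is correct; the paper does not include its own proof, referring instead to~\cite{FMN17}, and what you describe is exactly the argument used there. Two remarks, though.

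First, your assertion that the $-\frac{2c\beta}{\pi}\log t_n$ shift ``is chosen precisely so as to cancel'' the drift term is correct, but you should actually carry out that bookkeeping rather than just assert it, because it is the one step where a sign error would be fatal. In fact the paper's displayed scaling identity for $\alpha=1$ has a sign typo: working from $\eta(\I\xi)=-|c\xi|\bigl(1+\I\beta\tfrac{2}{\pi}\log|\xi|\,\mathrm{sgn}(\xi)\bigr)$, one computes
\begin{equation*}
t\,\eta\!\left(\tfrac{\I\xi}{t}\right)=\eta(\I\xi)+\frac{2c\beta}{\pi}(\log t)\,\I\xi,
\end{equation*}
with a \emph{plus} sign, not minus. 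It is this $+$ that the $\E^{-\I\xi\frac{2c\beta}{\pi}\log t_n}$ prefactor cancels; if you had mechanically applied the paper's displayed identity the two logarithmic terms would add rather than cancel. So the claim in your last paragraph that this case is ``essentially mechanical'' understates the point: this is precisely the place where the manipulation has to be verified, not taken on trust.

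Second, in the final step you should make the use of \emph{local uniform} convergence explicit: you are evaluating $\theta_n$ along a sequence of points $\xi/(t_n)^{1/\alpha}\to 0$, and pointwise convergence $\theta_n\to\theta$ alone would not justify $\theta_n\bigl(\xi/(t_n)^{1/\alpha}\bigr)\to\theta(0)=1$. The locally uniform convergence on a neighbourhood of $0$ together with the continuity of $\theta$ is what closes the gap; you mention both ingredients, so the argument is sound, but it is worth stating the triangle-inequality estimate rather than leaving it implicit.
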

\medskip

The stable laws are well known to be the attractors of the laws of sums of independent and identically distributed random variables. More precisely, fix a cumulative distribution function $F: \R \to [0,1]$, and consider a sequence $(X_n)_{n \in \N}$ of independent random variables with $\proba[X_n \leq x] = F(x)$ for any $n$. If the scaled sum 
$$\frac{S_n-A_n}{B_n} = \frac{X_1+X_2+\cdots+X_n - A_n}{B_n}$$
admits a limiting distribution for some choice of parameters $A_n$ and $B_n$, then this limiting distribution is necessarily a stable law $\phi_{c,\alpha,\beta}$ (up to a translation if $A_n$ is not chosen correctly); see \cite[Chapter 7]{GK68}. One then says that $F$ belongs to the domain of attraction of the stable law of parameter $(\alpha,\beta)$ ($c$ can be chosen by changing $B_n$). Necessary and sufficient conditions on $F$ for belonging to the domain of attraction of a stable distribution $\phi_{c,\alpha,\beta}$ are given in \cite[Chapter 7, \S35]{GK68} and \cite[Chapter 2, \S6]{IL71}. In terms of Fourier transforms, one criterion is the following \cite[Theorem 2.6.5]{IL71}: a probability measure $\mu$ belongs to the domain of attraction of a stable law of parameter $(\alpha,\beta)$ if and only if its Fourier transform writes in the neighborhood of the origin as
$$\widehat{\mu}(\xi) = \E^{\I m \xi - |c\xi|^\alpha (1-\I \beta h(\alpha,\xi)\,\mathrm{sgn}(\xi))\,s(\xi)\,(1+\eps(\xi))},$$
where $\lim_{\xi \to 0}\eps(\xi)=0$, and $\xi \mapsto s(\xi)$ is a slowly varying function at $0$ in the sense of Karamata (see \cite{BGT87}), meaning that $s$ is positive and
$$\forall a \neq 0,\,\,\,\lim_{\xi \to 0} \frac{s(a\xi)}{s(\xi)} = 1.$$
In this representation, $s(\xi)$ is positive real valued, whereas the function $\eps(\xi)$ can be complex.
The central limit theorem for random variables in the domain of attraction of a stable law is completed by a local limit theorem \cite{Shepp64,Stone65,Feller67}. In this paper, we shall revisit this theorem by showing that it is a simple consequence of a result of approximation by smooth test functions (Section \ref{subsec:stonefeller}).
\medskip

\subsection{Main results and outline of the paper}

We fix a stable law $\phi_{c,\alpha,\beta}$, and we consider a sequence $(X_n)_{n \in \N}$ of random variables that is mod-$\phi_{c,\alpha,\beta}$ convergent over some domain $D$, with parameters $(t_n)_{n \in \N}$ and limiting function $\psi$. The goal of this paper is to show that the central limit theorem of Proposition \ref{prop:convlaw} goes together with a local limit theorem
\begin{equation*}
\lim_{n\to\infty}(s_n)\,\proba[Y_n - x\in (s_n)^{-1}B] =p_{c,\alpha,\beta}(x)\,m(B),
\end{equation*}
at scales $(s_n)_{n\in \N}$ that are determined by the quality of the convergence of the residues $\theta_n \to \theta$ (or $\psi_n \to \psi$ if $D = \C$ or $D=\R$). To be more precise, with $(X_n)_{n \in \N}$ and $(Y_n)_{n\in \N}$ as in Proposition \ref{prop:convlaw}, there are two possible situations:
\begin{enumerate}
\item There exists a minimal scale $s_n \to +\infty$ such that:
\begin{itemize}
     \item if $s_n' =o(s_n)$ and $s_n'\to+\infty$, then $\proba[Y_n-x \in (s_n')^{-1}B]$ can be approximated by the probability for the stable reference law;
     \item on the contrary, $\proba[Y_n-x \in (s_n)^{-1}B]$ cannot be approximated by the stable distribution, because of combinatorial or arithmetic properties of the underlying random model (for instance, if $Y_n$ comes from a lattice-valued random variable).
\end{itemize}
In this case, the theory of zones of control introduced in \cite{FMN17} will enable us to determine the scale $s_n$, or at least a sequence $r_n =O(s_n)$ up to which the stable approximation holds. This is the content of Theorem \ref{thm:locallimit1}.
\item The stable approximation of the probability $\proba[Y_n-y \in (s_n)^{-1}B]$ holds as soon as $s_n \to +\infty$. Hence, for \emph{any} infinitesimal scale $\eps_n \to 0$, the probability of $Y_n$ falling in an interval with this scale $\eps_n$ is asymptotically equivalent to the probability given by the stable reference law. Theorem \ref{thm:locallimit2} gives a sufficient condition which relies on the notion of mod-$\phi$ convergence in $\leb^1(\I\R)$. Note that this cannot occur if the $X_n$'s are lattice valued.
\end{enumerate}

Both approaches extend the results of the paper \cite{DKN15}, which gave a set of conditions (\emph{cf.}~the hypotheses H1-H3 in \emph{loc.~cit.}) that implied a local limit theorem with respect to a symmetric stable law ($\beta=0$). In many cases, the results of our paper improve the range of validity of this local limit theorem. On the other hand, both approaches rely on estimates for the differences
$$\esper[g_n(Y_n)]-\esper[g_n(Y)],$$
where $Y\sim \phi_{c,\alpha,\beta}$ and where the $g_n$'s are integrable test functions whose Fourier transforms have compact support. These test functions already played an essential role in \cite{FMN17} when studying the speed of convergence in Proposition \ref{prop:convlaw}; we recall their main properties in Section \ref{sec:testfunction}. Note that recently, other techniques have been developed in order to obtain local limit theorems: for integer-valued random variables, let us mention in particular the use of Landau--Kolmogorov type inequalities \cite{RR15}, the use of translated Poisson random variables instead of normal variables, and estimates coming from Stein's method \cite{BRR17}.
\medskip

Let us now detail the content of the paper. The theoretical results are given in Sections \ref{subsec:compactlysupportedfourier}, \ref{sec:zone} and \ref{subsec:l1}. The other sections are devoted to a large variety of examples and applications:
\begin{itemize}
    \item In Section \ref{subsec:stonefeller}, we first look at sequences $S_n=\sum_{i=1}^nA_{i}$, where the $A_i$'s are i.i.d.~random variables. In this case, we show that our results on test functions imply the well-known local limit theorems for distributions in the domain of attraction of a stable law. Namely, we recover the theorem of Shepp \cite{Shepp64} for laws with finite variance, and the generalizations of Stone and Feller \cite{Stone65,Feller67} for the cases $\alpha\in (0,2)$.


    \item Section \ref{sec:sum} is devoted to the analysis of sums of random variables which are not identically distributed, or not independent. We start with random variables that can be represented in law by sums or series of independent random variables: 
    \begin{itemize}
        \item the size of a random integer partition or a random plane partition chosen with probability proportional to $q^{|\lambda|}$ (Section \ref{subsec:partition});
        \item the number of zeroes of a random analytic series in a disc around the origin, and more generally the number of points of a determinantal point process that fall in a compact subset (Section \ref{subsec:zeroes});
        \item the random zeta functions $$\log\left(\prod_{p\leq N} \frac{1}{1-\frac{X_p}{\sqrt{p}}}\right),$$
         where the $X_p$'s are labeled by prime numbers, and are independent uniform random variables on the unit circle (Section \ref{subsec:randomzeta}).
    \end{itemize}
    The first example was kindly suggested to us by A.~Borodin. On the other hand, the random zeta functions have already been studied in \cite[Section 3, Example 2]{KN12} and \cite[Section 3.5]{DKN15}, in connection to Ramachandra's conjecture on the denseness in the complex plane of the values of the Riemann $\zeta$ function on the critical line. For these three examples, we establish the mod-Gaussian convergence with an adequate zone of control, and we deduce from it a local limit theorem.

    \item More generally, we can work with sums $S_n = \sum_{v\in V_n} A_v$ of random variables which have a dependency structure encoded in a dependency graph or in a weight\-ed dependency graph. In \cite[Theorem 9.1.8]{FMN16} and \cite[Proposition 5.3]{FMN17}, we proved that these hypotheses imply uniform bounds on the cumulants of $S_n$. From these bounds, it is usually possible to establish a zone of control for a renormalization of $(S_n)_{n\in\N}$, and the local limit theorem is then a straightforward application of Theorem \ref{thm:locallimit1}. In Sections \ref{subsec:dependencygraph} and \ref{subsec:markov}, we study in particular the subgraph counts in random Erdös--Rényi graphs, and the number of visits of a finite ergodic Markov chain.

    \item The next applications (Section \ref{sec:matrix}) are based on the results in \cite{DHR18,DHR19}, where mod-Gauss\-ian convergence has been proven for sequences stemming either from random matrices or from the Coulomb gas context. For all these examples, one can compute a large zone of control, which combined with our main result (Theorem \ref{thm:locallimit1}) provides a local limit theorem. The precise models that we shall study are:
    \begin{itemize}
        \item in Section \ref{subsec:charge}, the charge ensembles proposed in \cite{RSX13,SS14}, which consist of charge $1$ and charge $2$ particles located on the real line or the circle and interacting via their pairwise logarithmic repulsion, and with an harmonic attraction towards the origin in the real case. In the regime where the two types of particles have the same magnitude, the asymptotic behavior of the number of particles with charge $1$ was studied by Dal Borgo, Hovhannisyan and Rouault in \cite{DHR18}.
        \item in Section \ref{subsec:gue}, the logarithm of the determinant of a random matrix of the Gaussian Unitary Ensemble. The central limit theorem for this quantity was shown by Delannay and Le Ca\"er in \cite{DLC00}, and moderate deviations and Berry--Esseen bounds were established by D\"oring and Eichelsbacher in \cite{DE13}.
        \item in Section \ref{subsec:beta}, the logarithm of the determinant of a random matrix of the $\beta$-Laguerre, the $\beta$-Jacobi and the $\beta$-uniform Gram ensemble, for general $\beta>0$. The corresponding central limit theorems were established by Rouault in \cite{Rou07}.
        \item last, in Section \ref{subsec:circular}, the logarithm of the characteristic polynomial of a random matrix of the $\beta$-circular Jacobi ensemble, for general $\beta>0$. An asymptotic study of these quantities relying on the theory of deformed Verblunsky coefficients was proposed by Bourgade, Nikeghbali and Rouault in \cite[Section 5]{BNR09}.
    \end{itemize}
    To the best of our knowledge, the local limit theorems are new for all these examples. Using the polynomial structure of the partition function and applying an argument of Bender \cite[Theorem 2]{Bender}, Forrester gave in \cite[Section 7.10]{For10} a local limit theorem for a two-component Coulomb gas model on the circle with charge ratio $2:1$. This model is analogous to the ensemble proposed in \cite{SS14} and studied in our Section \ref{subsec:charge}, but different.

    \item In our last Section \ref{sec:l1mod}, we consider examples that correspond to the second case of the alternative previously described. In \cite{DKN15,FMN17}, it has been proved that the winding number of the planar Brownian motion starting at $z=1$ converges in the mod-Cauchy sense, with a large zone of control. We show in \S\ref{subsec:brownian} that we have in fact mod-Cauchy convergence in $\leb^1(\I \R)$, and therefore a local limit theorem that holds at any infinitesimal scale. On the other hand, in \cite[Section 3]{MN15}, a notion of mod-Gaussian convergence in $\leb^1(\R)$ was introduced, leading to a simple proof of classical results of Ellis and Newman \cite{EN78} on the magnetisation of the Curie--Weiss model at critical temperature. In Section \ref{subsec:curieweiss}, we extend and generalise \cite[Theorem 22]{MN15}, by proving a local limit theorem for this magnetisation, which holds for more scales than in \emph{loc.~cit.}
\end{itemize}
\bigskip

\section{Smooth test functions}\label{sec:testfunction}
In this section, we introduce the main tool for the proof of local limit theorems, namely, a space of smooth test functions $\testf_0(\R)$. This functional space already appeared in \cite{FMN17} when studying estimates of the speed of convergence in central limit theorems. We state in \S\ref{subsec:compactlysupportedfourier} an approximation lemma which will enable the proof of our local limit theorems, and in Section \ref{subsec:stonefeller}, we explain how to recover quickly the Stone--Feller local limit theorem by using the space of test functions.
\medskip

\subsection{Functions with compactly supported Fourier transforms}\label{subsec:compactlysupportedfourier}
In this section, all the spaces of functions are spaces of complex functions on the real line $f : \R \to \C$. We note
\begin{itemize}
     \item $\mathscr{C}^\infty(\R)$ (respectively, $\mathscr{D}(\R)$) the space of infinitely differentiable functions on $\R$ (respectively, infinitely differentiable and compactly supported).
     \item $\leb^1(\R)$ the space of measurable functions on $\R$ that are integrable with respect to the Lebesgue measure.
 \end{itemize}  
On the other hand, if $f \in \leb^1(\R)$, its Fourier transform is the continous function
$$\widehat{f}(\xi) = \int_{\R} f(x)\,\E^{\I x \xi}\DD{x}.$$

\begin{definition}
A smooth test function is a function $f \in \leb^1(\R)$ whose Fourier transform is compactly supported:
$$\exists K\geq 0,\,\,\,\forall \xi \notin [-K,K],\,\,\,\widehat{f}(\xi) =0.$$
\end{definition}
We denote by $\testf_0(\R)$ the space of smooth test functions; it is a subspace of $\mathscr{C}^\infty(\R)$, and if $f \in \testf_0(\R)$, then $f$ and all its derivatives tend to $0$ at infinity, and $f$ satisfies the Plancherel inversion formula
$$f(x) = \frac{1}{2\pi}\int_{-K}^K \widehat{f}(\xi)\,\E^{-\I \xi x}\DD{\xi},$$
where $[-K,K]$ is a support for $\widehat{f}$. We refer to \cite[Section 2.2]{FMN17} for details on the functional space $\testf_0(\R)$. An essential property of $\testf_0(\R)$ is the following approximation result, proven in \cite[Theorem 4]{DKN15}:

\begin{theorem}\label{thm:approx}
Let $f \in \mathscr{D}(\R)$. For any $\eta >0$, there exists two smooth test functions $g_1,g_2 \in \testf_0(\R)$ such that $g_1 \leq f \leq g_2$ and
$$\int_{\R} (g_2(x)-g_1(x)) \DD{x}\leq \eta.$$
\end{theorem}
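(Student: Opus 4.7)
My strategy is to construct $g_1$ and $g_2$ as finite real linear combinations of Beurling--Selberg extremal functions attached to a step-function approximation of $f$.

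First I would invoke the classical Beurling--Selberg construction: for every closed interval $[a,b]\subset\R$ and every $T>0$ there exist real-valued functions $B^\pm_{[a,b],T}\in\testf_0(\R)$, with Fourier support contained in $[-T,T]$, satisfying the pointwise inequalities $B^-_{[a,b],T}(x)\le\mathbf{1}_{[a,b]}(x)\le B^+_{[a,b],T}(x)$ for all $x\in\R$, together with $\int_\R(B^+_{[a,b],T}-B^-_{[a,b],T})\DD{x}\le C/T$ for some absolute constant $C$. In particular, $B^-_{[a,b],T}\le 0$ off the interval $[a,b]$. These are the classical optimal one-sided Paley--Wiener approximants of an interval indicator.

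It is then enough to treat the case $f\ge 0$: writing $f=(f+k\psi)-k\psi$ for $\psi\in\mathscr{D}(\R)$ a nonnegative bump identically equal to $1$ on $\mathrm{supp}(f)$ and $k\ge\|f\|_\infty$ expresses $f$ as a difference of two nonnegative test functions, so sandwiches of each with $\leb^1$-gap at most $\eta/2$ combine linearly into a sandwich of $f$ with gap at most $\eta$. Assuming now $f\ge 0$ with $\mathrm{supp}(f)\subseteq[-A,A]$, and fixing $\epsilon>0$, uniform continuity of $f$ yields a finite partition of $[-A,A]$ into closed intervals $\overline{J_i}$ ($1\le i\le N$) on whose closures $f$ oscillates by at most $\epsilon$; I also choose slightly shrunk, pairwise disjoint closed subintervals $\tilde J_i\subseteq\overline{J_i}$, separated by narrow gaps of arbitrarily small total measure. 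I then set
\[
g_2:=\sum_{i=1}^N\Bigl(\sup_{\overline{J_i}}f\Bigr)B^+_{\overline{J_i},T},\qquad g_1:=\sum_{i=1}^N\Bigl(\inf_{\tilde J_i}f\Bigr)B^-_{\tilde J_i,T},
\]
both of which lie in $\testf_0(\R)$ as finite real linear combinations with Fourier support in $[-T,T]$. The Selberg inequalities together with the nonnegativity of all coefficients give $g_2\ge f$ (covering $\overline{J_i}$'s suffice to dominate $f$) and $g_1\le f$ (pairwise disjointness of the $\tilde J_i$ is used here, so that for $x\in\tilde J_{i_0}$ the contributions $m_j^-B^-_{\tilde J_j,T}(x)\le 0$ for $j\neq i_0$ are not double-counted). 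The $\leb^1$-gap is then bounded by
\[
\int_\R(g_2-g_1)\DD{x}\;\le\;O(\epsilon)+\frac{2NC\|f\|_\infty}{T},
\]
where the $O(\epsilon)$ absorbs the step-function error and the contribution of the gaps. Choosing $\epsilon$ small first (which fixes $N=N(\epsilon)$) and then $T$ large makes this bound at most $\eta$.

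The main obstacle is the existence of the Beurling--Selberg extremal functions; this is a classical but nontrivial input from analytic number theory. Once it is imported, the rest of the argument is elementary, and the smoothness of $f$ enters only through its uniform continuity, which keeps the number $N$ of intervals finite for any prescribed step size $\epsilon$.
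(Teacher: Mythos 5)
Your proof is correct. The paper does not reproduce a proof of this lemma; it simply cites \cite[Theorem 4]{DKN15}, and the argument given there also rests on the Beurling--Selberg extremal majorants and minorants of interval indicators, so you have essentially reconstructed the intended argument. The key points all check out: the Selberg functions $B^\pm_{[a,b],T}$ decay like $x^{-2}$, hence lie in $\leb^1(\R)$, and have Fourier support in $[-T,T]$ by Paley--Wiener, so any finite real linear combination lies in $\testf_0(\R)$; the reduction to $f\ge 0$ via $f=(f+k\psi)-k\psi$ is legitimate because $\testf_0(\R)$ is a real vector space and sandwiching errors add; nonnegativity of the coefficients together with $B^+\ge\mathbf{1}$ gives $g_2\ge f$; and the pairwise disjointness of the $\tilde J_i$, combined with $B^-\le\mathbf{1}_{[a,b]}\le 0$ off $[a,b]$, is exactly what prevents over-counting and gives $g_1\le f$. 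The final bookkeeping of the $\leb^1$-gap into an $O(\eps)$ step-function/gap error plus an $O(N/T)$ Selberg error, sent to zero by choosing $\eps$ first and then $T$, is also right. One small remark: you never use smoothness of $f$ beyond uniform continuity and boundedness with compact support, so your argument in fact proves the stronger statement that any Riemann-integrable compactly supported function can be sandwiched, which is essentially the content of Corollary~\ref{cor:approx} in the paper; it is worth noting that the passage through step functions makes the theorem and its corollary a single statement.
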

\noindent Using approximations by smooth functions in $\mathscr{D}(\R)$, one can extend Theorem \ref{thm:approx} to other functions. More precisely:
\begin{corollary}\label{cor:approx}
Let $B$ be a bounded measurable subset of $\R$ such that $\partial B$ has zero Lebesgue measure. For any $\eta>0$, there exists two smooth test functions $g_1,g_2 \in \testf_0(\R)$ such that $g_1 \leq 1_B \leq g_2$ and
$$\int_{\R}(g_2(x)-g_1(x))\DD{x}\leq \eta.$$
\end{corollary}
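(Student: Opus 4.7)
The plan is to reduce Corollary~\ref{cor:approx} to Theorem~\ref{thm:approx} by first sandwiching $1_B$ between two functions in $\mathscr{D}(\R)$ whose integrals differ by at most $\eta/2$, and then bracketing those two functions by elements of $\testf_0(\R)$ using Theorem~\ref{thm:approx} with tolerance $\eta/4$ each.

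First I would exploit the hypothesis $m(\partial B)=0$ via the regularity of Lebesgue measure. Since $B$ is bounded, $\overline{B}$ is compact and $m(\overline{B})=m(\mathrm{int}\,B)=m(B)$. By outer regularity, I can choose a bounded open set $U\supset\overline{B}$ with $m(U\setminus\overline{B})\leq \eta/4$, and by inner regularity a compact set $K\subset\mathrm{int}\,B$ with $m(\mathrm{int}\,B\setminus K)\leq \eta/4$. A standard smooth Urysohn construction (convolving indicator functions of suitable neighborhoods with a compactly supported mollifier) then yields $f_1,f_2\in\mathscr{D}(\R)$ with
\[
1_K\leq f_1\leq 1_{\mathrm{int}\,B}\leq 1_B\leq 1_{\overline{B}}\leq f_2\leq 1_U,
\]
so that $f_1\leq 1_B\leq f_2$ and $\int_{\R}(f_2-f_1)\DD{x}\leq m(U)-m(K)\leq \eta/2$.

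Next, applying Theorem~\ref{thm:approx} to $f_1\in\mathscr{D}(\R)$ with tolerance $\eta/4$, I obtain $g_1,h_1\in\testf_0(\R)$ with $g_1\leq f_1\leq h_1$ and $\int_{\R}(h_1-g_1)\DD{x}\leq \eta/4$; in particular $\int_{\R}(f_1-g_1)\DD{x}\leq \eta/4$. Analogously, applying Theorem~\ref{thm:approx} to $f_2$ yields $h_2,g_2\in\testf_0(\R)$ with $h_2\leq f_2\leq g_2$ and $\int_{\R}(g_2-f_2)\DD{x}\leq \eta/4$. Keeping only $g_1$ and $g_2$, one has $g_1\leq f_1\leq 1_B\leq f_2\leq g_2$, and
\[
\int_{\R}(g_2-g_1)\DD{x}=\int_{\R}(g_2-f_2)\DD{x}+\int_{\R}(f_2-f_1)\DD{x}+\int_{\R}(f_1-g_1)\DD{x}\leq \tfrac{\eta}{4}+\tfrac{\eta}{2}+\tfrac{\eta}{4}=\eta.
\]

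The scheme is almost entirely routine. The only delicate point is Step~1, where one must use $m(\partial B)=0$ together with the regularity of Lebesgue measure to produce $\mathscr{D}(\R)$-brackets at tolerance $\eta/2$; without the hypothesis on $\partial B$ there would be a gap of size $m(\partial B)$ that no smooth bracketing could close. Once that reduction is in place, Theorem~\ref{thm:approx} applied separately to the upper and lower smooth bracket handles the passage from $\mathscr{D}(\R)$ to $\testf_0(\R)$.
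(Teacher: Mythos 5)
Your proof is correct, but it follows a genuinely different route than the paper's. The paper invokes the Lebesgue criterion for Riemann integrability: since $1_B$ is bounded and, thanks to $m(\partial B)=0$, continuous almost everywhere, it is Riemann integrable, hence can be bracketed between two step functions with arbitrarily small integral gap; these step functions are then smoothed into $\mathscr{D}(\R)$-functions before Theorem~\ref{thm:approx} is applied. You instead bypass the step-function stage entirely, using outer and inner regularity of Lebesgue measure (together with $m(\overline{B})=m(\mathrm{int}\,B)$, which is where $m(\partial B)=0$ enters) to produce the compact/open sandwich $K\subset\mathrm{int}\,B\subset\overline{B}\subset U$, and then a smooth Urysohn/mollification argument to pass directly to $\mathscr{D}(\R)$-brackets. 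The two arguments have the same skeleton (reduce to $\mathscr{D}(\R)$, then apply Theorem~\ref{thm:approx}), but the paper's version packages the Jordan-measurability hypothesis conceptually as ``$1_B$ is Riemann integrable,'' which explains the terminology, while yours is arguably more self-contained since it does not rely on the equivalence between a.e.~continuity and Riemann integrability. One small stylistic remark: you apply Theorem~\ref{thm:approx} twice and discard half the output ($h_1$ and $h_2$); this is fine, but of course applying it once to $f_1$ (keeping the lower bracket) and once to $f_2$ (keeping the upper bracket) is exactly what the paper does implicitly as well.
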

\begin{proof}
The function $x \mapsto 1_B(x)$ is bounded, and since $m(\partial B)=0$, it is almost everywhere continuous. Therefore, it is Riemann integrable, and one can frame it between two step functions $f_1$ and $f_2$ (locally constant functions with a finite number of values). In turn, one can classically approximate these two step functions by smooth compactly supported functions in $\mathscr{D}(\R)$, and finally one can use Theorem \ref{thm:approx} to replace the smooth compactly supported functions in $\mathscr{D}(\R)$ by smooth test functions in $\testf_0(\R)$.
\end{proof}
In the sequel, a bounded measurable subset $B \subset \R$ whose boundary $\partial B$ has zero Lebesgue measure will be called a \emph{Jordan measurable subset}.
\medskip

\subsection{Stone--Feller local limit theorem}\label{subsec:stonefeller}
In the next section, we shall use the approximation theorem \ref{thm:approx} to prove local limit theorems in the mod-$\phi$ setting. As a warm-up, let us explain how to recover the Stone--Feller local limit theorem for sums of i.i.d.~random variables in the attraction domain of a stable law.
\begin{theorem}[Stone, Feller]
Let $\mu$ be a non-lattice distributed probability measure which is in the attraction domain of $\phi_{c,\alpha,\beta}$, and which has its Fourier transform that writes as
$$\widehat{\mu}(\xi) = \E^{\I m \xi - |c\xi|^\alpha (1-\I \beta h(\alpha,\xi)\,\mathrm{sgn}(\xi))\,s(\xi)\,(1+\eps(\xi))},$$
with $s$ slowly varying at $0$ and $\lim_{\xi \to 0} \eps(\xi) = 0$. We assume to simplify that we are not in the case $\alpha = 1,\beta \neq 0$. We consider a sum $S_n = X_1+\cdots+X_n$ of i.i.d.~random variables with law $\mu$, and we define $A_n$ and $B_n$ by
$$A_n = nm  \qquad;\qquad (B_n)^\alpha = n\,s\left(\frac{1}{B_n}\right).$$
Assume that $(B_n)_{n \in \N}$ goes to $+\infty$. Then, for any $x \in \R$ and any Jordan measurable subset $C$ with $m(C)>0$,
$$\lim_{n \to \infty} B_n\,\proba\!\left[S_n \in A_n+B_nx+C\right] = p_{c,\alpha,\beta}(x)\,m(C) ,$$
where $p_{c,\alpha,\beta}(x) $ is the density at $x$ of the stable law $\phi_{c,\alpha,\beta}$.
\end{theorem}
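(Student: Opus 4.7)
The plan is to deduce the theorem from the smooth test function approach of Theorem~\ref{thm:approx}, following the same route that will later be used in the mod-$\phi$ setting. Writing $Y_n = (S_n - A_n)/B_n$, the event $\{S_n \in A_n + B_n x + C\}$ coincides with $\{B_n(Y_n - x) \in C\}$, so the goal becomes
\begin{equation*}
\lim_{n \to \infty} B_n \, \esper\!\left[1_C\bigl(B_n(Y_n - x)\bigr)\right] = p_{c,\alpha,\beta}(x)\, m(C).
\end{equation*}
Given $\eta > 0$, Corollary~\ref{cor:approx} produces $g_1, g_2 \in \testf_0(\R)$ with $g_1 \le 1_C \le g_2$ and $\int_{\R}(g_2 - g_1) \le \eta$. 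Sandwiching $1_C$ between $g_1$ and $g_2$ reduces the problem to the following claim: for every $g \in \testf_0(\R)$,
\begin{equation*}
\lim_{n \to \infty} B_n \, \esper\!\left[g\bigl(B_n(Y_n - x)\bigr)\right] = p_{c,\alpha,\beta}(x) \int_{\R} g(y)\,dy.
\end{equation*}

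To prove this claim, I would use Fourier inversion. With $\widehat g$ supported on $[-K, K]$, the Plancherel formula and the change of variable $u = B_n \xi$ give
\begin{equation*}
B_n \, \esper\!\left[g\bigl(B_n(Y_n - x)\bigr)\right] = \frac{1}{2\pi} \int_{-KB_n}^{KB_n} \widehat g(u/B_n)\, \E^{\I ux}\, \chi_n(u)\, du,
\end{equation*}
where $\chi_n(u) = \esper[\E^{-\I u Y_n}] = \E^{\I u m n/B_n}\,(\widehat\mu(-u/B_n))^n$. Injecting the hypothesized representation of $\widehat\mu$ near the origin and using the scaling $(B_n)^\alpha = n\, s(1/B_n)$ together with Karamata's slow variation of $s$, a direct computation yields the pointwise limit $\chi_n(u) \to \E^{\eta_{c,\alpha,\beta}(-\I u)}$. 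Since moreover $\widehat g(u/B_n) \to \widehat g(0)$, the integrand converges pointwise to $\widehat g(0)\, \E^{\I ux + \eta_{c,\alpha,\beta}(-\I u)}$, whose integral equals $2\pi\, \widehat g(0)\, p_{c,\alpha,\beta}(x)$ by Fourier inversion of the stable density.

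To justify passing to the limit under the integral, I would partition $[-KB_n, KB_n]$ into three zones. On $\{|u| \le R\}$ for fixed $R$, bounded convergence suffices, since the integrand is dominated by $\|\widehat g\|_\infty$ on a bounded interval. On the intermediate zone $\{R < |u| \le \delta B_n\}$, with $\delta$ small enough that the asymptotic expansion of $\widehat\mu$ is accurate, the Potter bounds on the slowly varying function $s$ together with $\eps(u/B_n) \to 0$ yield a uniform estimate of the form $|\chi_n(u)| \le \E^{-|cu|^\alpha/2}$, which is integrable in $u$ independently of $n$. On the high-frequency zone $\{\delta B_n < |u| \le KB_n\}$, the ratio $v = u/B_n$ lies in the compact set $\{\delta \le |v| \le K\}$, and the non-lattice hypothesis yields $\kappa := \sup_{\delta \le |v| \le K}|\widehat\mu(v)| < 1$; hence $|\chi_n(u)| \le \kappa^n$, and the contribution of this zone is bounded by $2K\,B_n\, \|\widehat g\|_\infty\, \kappa^n$, which tends to $0$ since $B_n$ grows at most polynomially in $n$.

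The hard part is this high-frequency zone: it is exactly where the non-lattice assumption enters, and the estimate depends crucially on the compact support $[-K, K]$ of $\widehat g$, which truncates the integral at $|u| \le KB_n$ and thereby sidesteps the very high frequencies $|u| \gg B_n$ that would otherwise require stronger integrability assumptions on $\widehat\mu$. One then sends $n \to \infty$ first at fixed $R$, and afterwards $R \to \infty$, using the $n$-uniform integrable majorant on the intermediate zone to pass to the limit. This establishes the claim for each $g \in \testf_0(\R)$; applying it to $g_1$ and $g_2$ and letting $\eta \to 0$ closes the sandwich and produces the theorem.
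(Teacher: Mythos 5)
Your proposal follows the same blueprint as the paper's proof: reduce $1_C$ to smooth test functions via Corollary~\ref{cor:approx}, apply Parseval and a change of variable so the integral runs over $[-KB_n,KB_n]$, identify the pointwise limit of the integrand, and then justify passing to the limit. The only substantive difference is in the last step. The paper produces a single $n$-uniform integrable majorant over the whole of $[-KB_n,KB_n]$: it extracts from Karamata's representation theorem the lower bound $s(t/B_n)/s(1/B_n)\geq C_2\,|t|^{-\alpha/2}$, combines it with a positive lower bound $C_1$ on $\mathrm{Re}\bigl[(1-\I\beta h(\alpha,\xi)\,\mathrm{sgn}(\xi))(1+\eps(\xi))\bigr]$ valid on $[-K,K]$ thanks to the non-lattice hypothesis, and concludes by dominated convergence with the majorant $\|\widehat f\|_\infty\,\E^{-C_1C_2|t|^{\alpha/2}}$. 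Your version instead splits $[-KB_n,KB_n]$ into three zones: bounded convergence on a fixed compact $\{|u|\le R\}$, Potter bounds (equivalent in strength to Karamata's representation) on $\{R<|u|\le\delta B_n\}$, and a compactness argument $\kappa := \sup_{\delta\le|v|\le K}|\widehat\mu(v)|<1$ with $B_n$ at most polynomial in $n$ on the outer zone. The two arguments are morally interchangeable, but your decomposition has the small advantage of making explicit that the asymptotic form of $\widehat\mu$ is only needed near the origin, the outer zone being disposed of purely by compactness and the non-lattice hypothesis; the paper's single-majorant argument implicitly assumes the representation is usable on $[-K,K]$ for whatever $K$ the approximation lemma happens to produce. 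One minor imprecision: Potter bounds give $s(u/B_n)/s(1/B_n)\ge u^{-\delta}/A_\delta$ for any prescribed $\delta>0$, so the intermediate-zone majorant is of the form $\exp(-\mathrm{const}\cdot|u|^{\alpha-\delta})$ rather than $\E^{-|cu|^\alpha/2}$; choosing $\delta<\alpha$ keeps it integrable, so the conclusion stands.
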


\begin{remark}
The assumption $B_n \to + \infty$ is in fact a consequence of the other hypotheses, see \cite[Lemma in \S29]{GK68}; besides, in practice one can usually compute $B_n$ or an estimate of it. On the other hand, under the assumptions of the theorem, for any $\xi$ fixed in $\R$,
\begin{align*}
\esper[\E^{\I \xi \frac{S_n-A_n}{B_n}}] &= \exp\left(-|c\xi|^\alpha (1-\I \beta\, h(\alpha,\xi)\,\mathrm{sgn}(\xi))\,\frac{s\!\left(\frac{\xi}{B_n}\right)}{s\!\left(\frac{1}{B_n}\right)}\left(1+\eps\!\left(\frac{\xi}{B_n}\right)\right)\right) \\
&\to_{n \to \infty} \E^{-|c\xi|^\alpha (1-\I \beta h(\alpha,\xi)\,\mathrm{sgn}(\xi))}
\end{align*}
so we have the central limit theorem $\frac{S_n-A_n}{B_n} \rightharpoonup_{n \to \infty} \phi_{c,\alpha,\beta}$. The Stone--Feller theorem is a local version of this limiting result.
\end{remark}

\begin{proof}
If $Y_n = S_n-A_n-B_nx$, then we are interested in the asymptotics of the quantity $\proba[Y_n \in C]=\esper[1_{C}(Y_n)]$. By Corollary \ref{cor:approx}, it suffices to prove that for any $f \in \testf_0(\R)$, 
\begin{equation}
     \lim_{n\to \infty} B_n\,\esper[f(Y_n)] = \left(\int_{\R} f(y) \DD{y}\right)\,p_{c,\alpha,\beta}(x) \label{eq:stonefeller}
 \end{equation} 
for $f \in \testf_0(\R)$; the same result will then hold for $f=1_{C}$, hence the theorem.  We compute the left-hand side of \eqref{eq:stonefeller}, denoting $[-K,K]$ a support for $\widehat{f}$:
\begin{align*}
B_n\,\esper[f(Y_n)] &= \frac{B_n}{2\pi}\int_{\R} \widehat{f}(\xi)\,(\widehat{\mu}(-\xi))^n\, \E^{A_n\I\xi} \E^{B_n\I \xi x}\DD{\xi} \\
 &= \frac{B_n}{2\pi}\int_{-K}^K \widehat{f}(\xi)\,\E^{-n|c\xi|^\alpha (1-\I\beta h(\alpha,-\xi)\,\mathrm{sgn}(-\xi))\,s(-\xi)\,(1+\eps(-\xi))} \E^{ (A_n -nm)\I \xi +B_n\I \xi x}\DD{\xi} \\
 &= \frac{1}{2\pi}\int_{-KB_n}^{KB_n} \widehat{f}\left(-\frac{t}{B_n}\right)\,\E^{-|ct|^\alpha (1-\I\beta h(\alpha,t)\,\mathrm{sgn}(t))\,\frac{s(\frac{t}{B_n})}{s(\frac{1}{B_n})}\,(1+\eps(\frac{t}{B_n}))}\E^{-\I x t}\DD{t}.
 \end{align*}
Since $s$ is slowly varying around $0$, the pointwise limit as $n$ goes to infinity of the function in the integral is
$$\widehat{f}(0)\,\E^{-|ct|^\alpha (1-\I\beta h(\alpha,t)\,\mathrm{sgn}(t))}\,\E^{-\I x t}.$$
Let us explain why we can use the dominated convergence theorem. As $\mu$ is non-lattice distributed, the function $\widehat{\mu}(\xi)$ has modulus $1$ only for $\xi =0$, and therefore, the real part of the function $\xi \mapsto (1-\I \beta h(\alpha,\xi)\,\mathrm{sgn}(\xi))(1+\eps(\xi))$ does not vanish on $\R$. In particular, this real part stays bounded from below by a positive constant $C_1$ on some interval $[-K,K]$. On the other hand, in order to evaluate the ratio $s(\frac{t}{B_n})/s(\frac{1}{B_n})$, we use Karamata's representation theorem, which states that for $\xi \leq K$,
\begin{equation}
     s(\xi) = \exp\left(\eta_1(\xi) + \int_\xi^K \eta_2(u)\,\frac{\!\DD{u}}{u}\right) \label{eq:karamata}
 \end{equation}
with $\eta_1$ bounded measurable function admitting a limit $\eta_1(0)$ when $\xi \to 0$, and $\eta_2$ is a bounded measurable function with $\lim_{\xi \to 0} \eta_2(\xi)=0$; see \cite[Section 1.3]{BGT87}. Up to a modification of the pair $(\eta_1,\eta_2)$, we can assume that $|\eta_2|\leq \frac{\alpha}{2}$ for any $\xi \leq K$. Then, 
\begin{align*}
\frac{s\!\left(\frac{t}{B_n}\right)}{s\!\left(\frac{1}{B_n}\right)} &= \exp\!\left( \eta_1\!\left(\frac{t}{B_n} \right)-\eta_1\!\left(\frac{1}{B_n} \right)+\int_{\frac{t}{B_n}}^{\frac{1}{B_n}} \eta_2(u)\,\frac{\!\DD{u}}{u}\right)
\\
&\geq \exp\!\left(O(1) - \frac{\alpha}{2}\log t\right) \geq \frac{C_2}{t^\frac{\alpha}{2}}
\end{align*}
for some constant $C_2>0$, and any $t \leq KB_n$. Therefore, on the zone of integration,
$$\mathrm{Re}\left(|t|^\alpha (1-\I\beta h(\alpha,t)\,\mathrm{sgn}(t))\,\frac{s(\frac{t}{B_n})}{s(\frac{1}{B_n})}\left(1+\eps\!\left(\frac{t}{B_n}\right)\right)\right) \geq C_1C_2 |t|^{\frac{\alpha}{2}}.$$
This lower bound allows one to use the dominated convergence theorem, which shows that:
\begin{align*}
\lim_{n \to \infty} B_n \,\esper[f(Y_n)] &= \frac{1}{2\pi} \int_\R \widehat{f}(0)\,\E^{-|ct|^\alpha (1-\I\beta h(\alpha,t)\,\mathrm{sgn}(t))}\,\E^{-\I xt}\DD{t} \\
&= \left(\int_{\R} f(y) \DD{y}\right)\,p_{c,\alpha,\beta}(x).
\qedhere
\end{align*}
\end{proof}
\medskip

The proof adapts readily to the case $\alpha=1$, up to a modification of the parameters $A_n$ and $B_n$ when $\beta\neq 0$. Notice that our result of approximation by smooth test functions has reduced the notoriously difficult proof of the local limit theorem of Stone and Feller (due to Shepp in the case $\alpha=2$, for random variables with finite variance) to an application of Parseval's formula and of Karamata's representation theorem.
\bigskip

\section{Zones of control and local limit theorems}\label{sec:zone}
In this section, we explain how to obtain local limit theorems in the setting of mod-stable convergent sequences. The main idea is that the scales at which the stable approximation of a mod-$\phi$ convergent sequence is valid are dictated by:
\begin{enumerate}
    \item the behavior of the residues $\theta_n(\xi)$ and $\theta(\xi)$ around $0$;
    \item the maximal size of a zone on which the growth of these residues can be controlled.
\end{enumerate}
In the following, we fix a sequence of real-valued random variables $(X_n)_{n \in \N}$, a sequence of parameters $t_n \to +\infty$ and a reference stable law $\phi_{\alpha,\beta,c}$. In Section \ref{subsec:zone}, we recall the definition of zone of control, which is in some sense an improvement of the definition of mod-stable convergence. In Section \ref{subsec:llt_modstable}, we prove local limit theorems under this hypothesis of zone of control. 

\subsection{The notion of zone of control}\label{subsec:zone}
In \cite[Section 2.1]{FMN17}, the rate of convergence in Proposition \ref{prop:convlaw} was determined by using the notion of zone of control, which we recall here:
\begin{definition}\label{def:zone}
Let $(X_n)_{n \in \N}$ be a sequence of real-valued random variables, $\phi_{c,\alpha,\beta}$ be a stable law, and $t_n \to +\infty$. We set $\theta_n(\xi) = \esper[\E^{\I \xi X_n}]\,\E^{-t_n\eta_{c,\alpha,\beta}(\I \xi)}$. Consider the following assertions:
\begin{enumerate}[label=(Z\arabic*)]
\item\label{hyp:zone1} Fix $\nu>0,\ \omega>0$ and $\gamma\in\R$. There exists positive constants $K$, $K_1$ and $K_2$ that are independent of $n$ and such that, for all $\xi$ in the zone  $\left[-K\left(t_n\right)^\gamma,K\left(t_n\right)^\gamma\right]$,
$$
\left|\theta_n(\xi)-1\right|\le K_1\left|\xi\right|^\nu \exp\left(K_2\left|\xi\right|^\omega\right).
$$
\item\label{hyp:zone2} One has
$$
\alpha\le \omega, \qquad -\frac{1}{\alpha}< \gamma\le\frac{1}{\omega-\alpha},\qquad 0<K\le\left(\frac{c^\alpha}{2K_2}\right)^{\frac{1}{\omega-\alpha}}.
$$
\end{enumerate}
Note that if Condition \ref{hyp:zone1} holds for some parameters $\gamma>-\frac{1}{\alpha}$ and $\nu,\omega,K,K_1,K_2$, then \ref{hyp:zone2} can always be forced by increasing $\omega$, and then decreasing $K$ and $\gamma$. If Conditions \ref{hyp:zone1} and \ref{hyp:zone2} are satisfied, then we say that we have a zone of control $\left[-K\left(t_n\right)^\gamma,K\left(t_n\right)^\gamma\right]$ with index of control $(\nu,\omega)$.
\end{definition}

Let us make a few remarks on this definition. First, Conditions \ref{hyp:zone1} and \ref{hyp:zone2} imply that if $(Y_n)_{n \in \N}$ is defined in terms of $(X_n)_{n \in \N}$ in the same way as in Proposition \ref{prop:convlaw}, then one has the convergence in law $Y_n \rightharpoonup \phi_{c,\alpha,\beta}$ \cite[Proposition 2.3]{FMN17}. On the other hand, the mod-$\phi_{c,\alpha,\beta}$ convergence implies the existence of a zone of control $[-K,K]$ with $\gamma = 0$, with index $(\nu=0,\omega=\alpha)$ and with $K$ as large as wanted (and $K_2=0$). Therefore, Definition \ref{def:zone} is a generalisation of the notion of mod-stable convergence. Conversely, a zone of control does not imply the mod-stable convergence, even if $\gamma\geq 0$. However, in all the examples that we are going to present, it will always be the case that the sequence under consideration converges mod-$\phi_{c,\alpha,\beta}$ with the same parameters $(t_n)_{n \in \N}$ as for the notion of zone of control. 
\medskip

\subsection{Local limit theorems for mod-stable random variables}\label{subsec:llt_modstable}
We can now state our main result:

\begin{theorem}\label{thm:locallimit1}
Let  $(X_n)_{n \in \N}$ be a sequence of real-valued random variables, $\phi_{\alpha,\beta,c}$ a stable reference law, $(t_n)_{n \in \N}$ a sequence growing to infinity, and 
$$\theta_n(\xi) = \esper[\E^{\I \xi X_n}]\,\E^{-t_n\eta_{c,\alpha,\beta}(\I \xi)}.$$ We assume that there is a zone of control $\left[-K\left(t_n\right)^\gamma,K\left(t_n\right)^\gamma\right]$ with index $(\nu,\omega)$, and we denote $(Y_n)_{n \in \N}$ the renormalisation of $(X_n)_{n \in \N}$ given by Proposition \ref{prop:convlaw}. Let $x \in \R$ and $B$ be a fixed Jordan measurable subset with $m(B)>0$. Then, for every exponent $\delta \in (0,\gamma+\frac{1}{\alpha})$, 
$$ \lim_{n \to \infty} (t_n)^{\delta}\,\,\proba\!\left[Y_n - x \in \frac{1}{(t_n)^\delta}\,B\right] = p_{c,\alpha,\beta}(x)\,m(B).$$
\end{theorem}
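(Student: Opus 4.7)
The plan is to reduce the statement to a convergence result on smooth test functions in $\testf_0(\R)$, and then to establish that convergence by Fourier inversion and dominated convergence, using the zone-of-control estimate to produce a uniformly integrable majorant.

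First, I would apply Corollary \ref{cor:approx} to frame the indicator $1_B$ between two smooth test functions $g_1 \le 1_B \le g_2$ in $\testf_0(\R)$ satisfying $\int_\R (g_2-g_1)\DD y \le \eta$. Writing
\begin{equation*}
\proba\!\left[Y_n - x \in \tfrac{1}{(t_n)^\delta} B\right] = \esper\!\left[1_B\!\left((t_n)^\delta (Y_n - x)\right)\right]
\end{equation*}
and sandwiching, the theorem reduces to proving that, for every $f \in \testf_0(\R)$,
\begin{equation*}
\lim_{n\to\infty} (t_n)^\delta\, \esper\!\left[f\!\left((t_n)^\delta(Y_n-x)\right)\right] = \left(\int_\R f(y)\DD y\right) p_{c,\alpha,\beta}(x);
\end{equation*}
letting $\eta \to 0$ after applying this to $g_1$ and $g_2$ then pins the liminf and limsup of the target quantity at $m(B)\,p_{c,\alpha,\beta}(x)$.

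Second, with $\widehat f$ supported in $[-L,L]$, I would use the Plancherel inversion formula and change variable $u = \xi (t_n)^\delta$ to obtain
\begin{equation*}
(t_n)^\delta\, \esper\!\left[f\!\left((t_n)^\delta(Y_n-x)\right)\right] = \frac{1}{2\pi}\int_{-L(t_n)^\delta}^{L(t_n)^\delta} \widehat f\!\left(\tfrac{u}{(t_n)^\delta}\right) \E^{\I u x}\, \esper\!\left[\E^{-\I u Y_n}\right] \DD u.
\end{equation*}
The scaling property of the stable Lévy exponent, combined with the centering prescribed in Proposition \ref{prop:convlaw} (whose logarithmic shift in the case $\alpha = 1$ is designed precisely to absorb the corresponding phase), collapses the Fourier transform to
\begin{equation*}
\esper\!\left[\E^{-\I u Y_n}\right] = \E^{\eta_{c,\alpha,\beta}(-\I u)}\, \theta_n\!\left(-u/(t_n)^{1/\alpha}\right).
\end{equation*}
For each fixed $u$, continuity of $\widehat f$ yields $\widehat f(u/(t_n)^\delta) \to \widehat f(0)= \int_\R f$, and the zone-of-control bound applied at $\xi_n = -u/(t_n)^{1/\alpha}$, together with $\nu>0$, gives $\theta_n(\xi_n)\to 1$. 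Extending the integral to all of $\R$ (the integrand vanishes outside $[-L(t_n)^\delta, L(t_n)^\delta]$), the candidate limit is
\begin{equation*}
\frac{\widehat f(0)}{2\pi}\int_\R \E^{\eta_{c,\alpha,\beta}(-\I u)}\E^{\I u x}\DD u = \widehat f(0)\,p_{c,\alpha,\beta}(x),
\end{equation*}
by Fourier inversion of the stable density, which is the right-hand side we seek.

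The main technical obstacle is the construction of an $n$-uniform integrable dominant in order to justify dominated convergence. Using $|\E^{\eta_{c,\alpha,\beta}(-\I u)}| = \E^{-|cu|^\alpha}$ and $|\theta_n(\xi)|\le 1 + K_1|\xi|^\nu\exp(K_2|\xi|^\omega)$ on the zone of control, the modulus of the integrand is bounded by
\begin{equation*}
\|\widehat f\|_\infty \Bigl( \E^{-|cu|^\alpha} + K_1 \tfrac{|u|^\nu}{(t_n)^{\nu/\alpha}} \exp\bigl(-|cu|^\alpha + K_2 \tfrac{|u|^\omega}{(t_n)^{\omega/\alpha}}\bigr)\Bigr).
\end{equation*}
On $|u|\le L(t_n)^\delta$ the correction $K_2|u|^\omega/(t_n)^{\omega/\alpha}$ is controlled by $K_2 L^{\omega-\alpha} (t_n)^{\delta(\omega-\alpha) - \omega/\alpha}$. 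The constraint $\delta < \gamma + 1/\alpha$, combined with condition \ref{hyp:zone2} (which forces $\gamma + 1/\alpha \le \omega/(\alpha(\omega-\alpha))$ when $\omega>\alpha$; the case $\omega = \alpha$ is easier still), makes the exponent $\delta(\omega-\alpha) - \omega/\alpha$ strictly negative, so for $n$ large the correction is dominated by $\tfrac12|cu|^\alpha$. This produces the $n$-uniform integrable majorant $\|\widehat f\|_\infty(\E^{-|cu|^\alpha} + K_1|u|^\nu \E^{-|cu|^\alpha/2})$, and dominated convergence concludes. The crux is exactly the choice of the threshold $\gamma+1/\alpha$: it is the critical exponent at which the zone-of-control correction can still be absorbed into the stable decay $\E^{-|cu|^\alpha}$, and the argument breaks down at any larger scale.
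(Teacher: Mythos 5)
Your proof is correct, and it takes a route that is conceptually parallel to the paper's but considerably more self-contained. The paper, after the same reduction to test functions in $\testf_0(\R)$ via Corollary~\ref{cor:approx}, appeals directly to Lemma~\ref{lem:estimatetestfunction} (Proposition~2.12 of \cite{FMN17}), which yields the quantitative bound
$$\left|\esper[f_n(Y_n)] - \int_{\R} f_n\,\DD\phi_{c,\alpha,\beta}\right| \leq C\,K_1\,\frac{\|f_n\|_{\leb^1}}{(t_n)^{\nu/\alpha}},$$
and then handles the remaining term $(t_n)^\delta\int_\R f_n\,\DD\phi_{c,\alpha,\beta}=\int_\R f(u)\,p_{c,\alpha,\beta}(x+u\,(t_n)^{-\delta})\DD u$ by dominated convergence using boundedness of the stable density. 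Your proof effectively re-derives that cited lemma by working directly with Fourier inversion and the zone-of-control hypothesis: you produce the $n$-uniform integrable majorant $\|\widehat f\|_\infty(\E^{-|cu|^\alpha} + K_1|u|^\nu\E^{-|cu|^\alpha/2})$ by absorbing the correction $K_2|u|^\omega/(t_n)^{\omega/\alpha}$ into the stable decay, and you correctly identify that the constraints $\delta < \gamma+1/\alpha$ and Condition~\ref{hyp:zone2} (which gives $\gamma + 1/\alpha \le \omega/(\alpha(\omega-\alpha))$) are precisely what makes $\delta(\omega-\alpha) - \omega/\alpha < 0$, hence the absorption. What the paper's route buys is a quantitative error term $O((t_n)^{-\nu/\alpha})$ and brevity; what yours buys is a self-contained argument that makes the role of the threshold $\gamma + 1/\alpha$ transparent. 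One small slip in wording: you write that the correction $K_2|u|^\omega/(t_n)^{\omega/\alpha}$ is ``controlled by $K_2 L^{\omega-\alpha}(t_n)^{\delta(\omega-\alpha)-\omega/\alpha}$'' --- this is the bound on the \emph{ratio} of the correction to $|u|^\alpha$, not on the correction itself, but the conclusion you draw from it (absorbing it into $\tfrac12|cu|^\alpha$) is correct. Also, you should say a word about why $\xi_n = -u/(t_n)^{1/\alpha}$ lies in the zone $[-K(t_n)^\gamma,K(t_n)^\gamma]$ for $n$ large when $|u|\le L(t_n)^\delta$: this is again exactly $\delta < \gamma + 1/\alpha$, and is worth making explicit since it is the point where the zone-of-control hypothesis enters.
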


Before proving Theorem \ref{thm:locallimit1}, let us make a few comments. First, since this is an asymptotic result, we actually only need a zone of control on the residues $\theta_n$ for $n$ large enough. Secondly, for exponents $\delta \in (0,\frac{1}{\alpha}]$, this local limit theorem was proven in \cite[Theorem 5, Propositions 1 and 2]{DKN15}. Theorem \ref{thm:locallimit1} improves on these previous results by showing that the stable approximation holds at scales $(t_n)^{-\delta}$:
\begin{itemize}
     \item which can be smaller than in \cite{DKN15},
     \item and which are directly connected to the size of the zone of control.
\end{itemize}  
 
\begin{lemma}\label{lem:estimatetestfunction}
Consider a sequence $(X_n)_{n\in \N}$ that satisfies the assumptions of Theorem \ref{thm:locallimit1}. Let $f_n \in \testf_0(\R)$ be a smooth test function whose Fourier transform $\widehat{f}_n$ has its support included in the zone $\left[-K\left(t_n\right)^{\gamma+1/\alpha},K\left(t_n\right)^{\gamma+1/\alpha}\right]$. There exists a constant $C(c,\alpha,\nu)$ such that 
$$\left|\esper[f_n(Y_n)] - \int_{\R} f_n(y)\,\phi_{c,\alpha,\beta}(\!\DD{y}) \right| \leq C(c,\alpha,\nu)\,K_1\,\frac{\|f_n\|_{\leb^1}}{(t_n)^{\nu/\alpha}}.$$
\end{lemma}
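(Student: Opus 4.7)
The plan is to express both $\esper[f_n(Y_n)]$ and $\esper[f_n(Y)]$ (with $Y\sim\phi_{c,\alpha,\beta}$) via Fourier inversion, so that their difference becomes an integral against $\widehat{f}_n$ of the factor $\theta_n(\cdot/(t_n)^{1/\alpha}) - 1$, which is then controlled using \ref{hyp:zone1}. Since $f_n\in\testf_0(\R)$ satisfies the Plancherel inversion formula and $\widehat{f}_n$ is supported in $[-T_n,T_n]$ with $T_n := K(t_n)^{\gamma+1/\alpha}$, I would first write
$$
\esper[f_n(Y_n)] - \esper[f_n(Y)] \;=\; \frac{1}{2\pi}\int_{-T_n}^{T_n} \widehat{f}_n(\xi)\left(\esper[\E^{-\I\xi Y_n}] - \E^{\eta_{c,\alpha,\beta}(-\I\xi)}\right)\DD{\xi}.
$$
Using the scaling property of the L\'evy exponent combined with the definition of $Y_n$ in Proposition~\ref{prop:convlaw} (the logarithmic correction in the $\alpha=1$ case is designed precisely to absorb the extra linear term coming from the scaling of $\eta$), a short verification gives in both cases the clean factorisation $\esper[\E^{-\I\xi Y_n}] = \theta_n(-\xi/(t_n)^{1/\alpha})\,\E^{\eta_{c,\alpha,\beta}(-\I\xi)}$.

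The crucial observation is that for $\xi\in[-T_n,T_n]$, the rescaled argument $-\xi/(t_n)^{1/\alpha}$ lies exactly in the zone of control $[-K(t_n)^\gamma, K(t_n)^\gamma]$, so \ref{hyp:zone1} applies. Combined with $|\widehat{f}_n(\xi)|\leq\|f_n\|_{\leb^1}$ and $|\E^{\eta_{c,\alpha,\beta}(-\I\xi)}|=\E^{-|c\xi|^\alpha}$, this will yield
$$
\left|\esper[f_n(Y_n)] - \esper[f_n(Y)]\right| \;\leq\; \frac{K_1\,\|f_n\|_{\leb^1}}{2\pi\,(t_n)^{\nu/\alpha}} \int_{-T_n}^{T_n} |\xi|^\nu \exp\!\left(\frac{K_2|\xi|^\omega}{(t_n)^{\omega/\alpha}} - |c\xi|^\alpha\right)\DD{\xi},
$$
so the entire claim reduces to showing that this remaining integral is bounded by a constant depending only on $c,\alpha,\nu$.

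The main obstacle is precisely this last step, where one must use condition \ref{hyp:zone2} in full. For $|\xi|\leq T_n$, the key computation is
$$
\frac{K_2|\xi|^\omega}{(t_n)^{\omega/\alpha}} \;=\; K_2\left(\frac{|\xi|}{(t_n)^{1/\alpha}}\right)^{\!\omega-\alpha}\!|\xi|^\alpha \;\leq\; K_2\,K^{\omega-\alpha}\,(t_n)^{\gamma(\omega-\alpha)-1}\,|\xi|^\alpha \;\leq\; \frac{c^\alpha}{2}\,|\xi|^\alpha,
$$
where the last two inequalities rest respectively on $\omega\geq\alpha$ with $\gamma\leq 1/(\omega-\alpha)$ (which forces $(t_n)^{\gamma(\omega-\alpha)-1}\leq 1$), and on $K^{\omega-\alpha}\leq c^\alpha/(2K_2)$. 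The exponent in the integrand is therefore bounded above by $-\frac{c^\alpha}{2}|\xi|^\alpha$ throughout the zone of integration, so the integrand is dominated on $\R$ by $|\xi|^\nu\,\E^{-c^\alpha|\xi|^\alpha/2}$, whose integral is a finite constant $C(c,\alpha,\nu)$. Plugging this bound back gives the announced estimate.
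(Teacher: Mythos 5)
Your proof is correct and follows the natural Parseval-plus-zone-of-control route; the paper itself simply cites \cite[Proposition~2.12]{FMN17} for this lemma, and your reconstruction is consistent with the Parseval technique the paper uses elsewhere (e.g.\ in its proof of the Stone--Feller theorem). One small algebraic slip: in your chain of estimates for $K_2|\xi|^\omega/(t_n)^{\omega/\alpha}$, the first equality is missing a factor $1/t_n$ --- the correct identity is
$$
\frac{K_2|\xi|^\omega}{(t_n)^{\omega/\alpha}} = \frac{K_2}{t_n}\left(\frac{|\xi|}{(t_n)^{1/\alpha}}\right)^{\!\omega-\alpha}|\xi|^\alpha,
$$
and it is this $1/t_n$ that, combined with $|\xi|/(t_n)^{1/\alpha}\le K(t_n)^\gamma$, produces the exponent $\gamma(\omega-\alpha)-1\le 0$ appearing (correctly) in your next inequality. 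The final bound and the conclusion are unaffected.
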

\begin{proof}
See \cite[Proposition 2.12]{FMN17}.
\end{proof}

\begin{proof}[Proof of Theorem \ref{thm:locallimit1}]
We fix $x,\delta$ and $B$ as in the statement of the theorem. Suppose that we can prove that
$$\lim_{n \to \infty} (t_n)^{\delta}\,\esper\!\left[f((t_n)^\delta (Y_n-x))\right] = p_{c,\alpha,\beta}(x)\,\left(\int_{\R}f(y)\DD{y}\right).$$
for any $f \in \testf_0(\R)$. Then, for any $\eta>0$, Corollary \ref{cor:approx} shows that there exist $f_1,f_2 \in \testf_0(\R)$ with $f_1\leq 1_{B} \leq f_2$ and $\int_{\R}f_2(x)-f_1(x)\DD{x} \leq \eta$, so 
\begin{align*}
\limsup_{n \to \infty} \,(t_n)^{\delta}\,\,&\proba\!\left[Y_n-x \in \frac{1}{(t_n)^\delta}\,B\right] 
\leq \lim_{n \to \infty} \,(t_n)^{\delta}\,\,\esper\!\left[f_2((t_n)^\delta (Y_n-x))\right] \\
&\leq p_{c,\alpha,\beta}(x)\,\left(\int_{\R} f_2(y)\DD{y}\right) \\
&\leq p_{c,\alpha,\beta}(x)\,\left(\int_{\R} f_1(y)\DD{y}+\eta\right)  \\
&\leq p_{c,\alpha,\beta}(x)\,\eta + \lim_{n \to \infty} (t_n)^{\delta}\,\,\esper\!\left[f_1((t_n)^\delta (Y_n-x))\right] \\
&\leq p_{c,\alpha,\beta}(x)\,\eta + \liminf_{n \to \infty}\, (t_n)^{\delta}\,\,\proba\!\left[Y_n-x \in \frac{1}{(t_n)^\delta}\,B\right] 
\end{align*}
so the local limit theorem holds. Hence, as in the proof of the Stone--Feller local limit theorem, we have reduced our problem to estimates on test functions in $\testf_0(\R)$. Fix $f \in \testf_0(\R)$, and denote $f_n(y)=f((t_n)^{\delta}(y-x))$. If $[-C,C]$ is the support of $\widehat{f}$, then $[-C(t_n)^{\delta},C(t_n)^{\delta}]$ is the support of $\widehat{f}_n$, and since $\delta<\gamma+\frac{1}{\alpha}$, it is included in $[-K(t_n)^{\gamma+1/\alpha},K(t_n)^{\gamma+1/\alpha}]$ for $n$ large enough ($K$ being given by Condition \ref{hyp:zone1} of zone of control). Hence, by the previous lemma, 
\begin{align*}
\esper[f_n(Y_n)] &= \left(\int_{\R} f_n(y)\,\phi_{c,\alpha,\beta}(\!\DD{y})\right) + O\left(\frac{\|f_n\|_{\leb^1}}{(t_n)^{\frac{\nu}{\alpha}}}\right) =\left(\int_{\R} f_n(y)\,\phi_{c,\alpha,\beta}(\!\DD{y})\right) + O\left(\frac{\|f\|_{\leb^1}}{(t_n)^{\frac{\nu}{\alpha}+\delta}}\right),
\end{align*}
which implies
\begin{align*}
(t_n)^{\delta} \,\,\esper[f((t_n)^\delta (Y_n-x))]&=(t_n)^{\delta}\,\,\esper[f_n(Y_n)] \\
&= (t_n)^{\delta}\, \left(\int_{\R} f_n(y)\,\phi_{c,\alpha,\beta}(\!\DD{y})\right) + O\!\left(\frac{\|f\|_{\leb^1}}{(t_n)^{\frac{\nu}{\alpha}}}\right)\\
&=\int_{\R} f(u)\,p_{c,\alpha,\beta}\left(x+\frac{u}{(t_n)^{\delta}}\right)\DD{u} + o(1).
\end{align*}
Since $p_{c,\alpha,\beta}(y)=\frac{1}{2\pi}\int_{\R}\E^{\eta_{c,\alpha,\beta}(\I\xi)}\,\E^{-\I y \xi}\DD{\xi}$ is bounded by $\frac{1}{2\pi}\int_{\R} \E^{-|c\xi|^\alpha}\DD{\xi}$, by dominated convergence, the limit of the integral is 
\begin{equation*}
p_{c,\alpha,\beta}(x)\,\left(\int_{\R} f(u)\DD{u}\right).\qedhere
\end{equation*}
\end{proof}
\medskip

If we want Theorem \ref{thm:locallimit1} to be meaningful, it is natural to ask when one can give an explicit formula for the density $p_{c,\alpha,\beta}$ at the real point $x$. Unfortunately, there is no general closed formula for the density of a stable law and it is known explicitly only for the Lévy, Cauchy and normal distributions. However, the following proposition gives a sufficient condition for the existence of a closed formula at the origin.

\begin{proposition} Suppose that $|\beta \tan(\frac{\alpha\pi}{2})|< 1$. Then, the density of the stable distribution $\phi_{c,\alpha,\beta}$ at $x=0$ is given by the convergent series
$$ p_{c,\alpha,\beta}(0)= \frac{1}{\pi \alpha c}\,\sum_{k=0}^\infty (-1)^k\,\left(\beta \,\tan\left(\frac{\pi \alpha}{2}\right)\right)^{2k}\,\frac{\Gamma(2k+\frac{1}{\alpha})}{\Gamma(2k+1)}.$$ 
\end{proposition}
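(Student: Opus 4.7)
The plan is to compute $p_{c,\alpha,\beta}(0)$ directly from the Fourier inversion formula
$$p_{c,\alpha,\beta}(0) = \frac{1}{2\pi}\int_{\R}\E^{\eta_{c,\alpha,\beta}(\I\xi)}\DD{\xi},$$
which is legitimate since $|\E^{\eta_{c,\alpha,\beta}(\I\xi)}| = \E^{-|c\xi|^\alpha}$ is integrable. The hypothesis $|b|<1$ with $b := \beta\tan(\pi\alpha/2)$ forces $\alpha \neq 1$, so for $\xi>0$ the exponent equals $-(c\xi)^\alpha(1-\I b)$, and for $\xi<0$ it equals $-(c|\xi|)^\alpha(1+\I b)$. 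After flipping the sign on the negative half-line, this reduces the density to a sum of two integrals on $(0,\infty)$.

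First, I would evaluate each integral via the substitution $u = (c\xi)^\alpha$, which gives the ``complex Gamma'' identity
$$\int_0^\infty \E^{-(c\xi)^\alpha z}\DD{\xi} = \frac{\Gamma(1/\alpha)}{\alpha c\, z^{1/\alpha}}, \qquad \mathrm{Re}(z)>0,$$
with the principal branch of $z^{1/\alpha}$. Since $1 \pm \I b$ lies in the open right half-plane, applying this to $z = 1 \mp \I b$ and adding the two conjugate contributions yields
$$p_{c,\alpha,\beta}(0) = \frac{\Gamma(1/\alpha)}{2\pi\alpha c}\left[(1-\I b)^{-1/\alpha} + (1+\I b)^{-1/\alpha}\right] = \frac{\Gamma(1/\alpha)}{\pi\alpha c}\,\mathrm{Re}\!\left((1+\I b)^{-1/\alpha}\right).$$

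Second, I would expand via the generalised binomial series
$$(1+\I b)^{-1/\alpha} = \sum_{n=0}^\infty \binom{-1/\alpha}{n}(\I b)^n = \sum_{n=0}^\infty \frac{(-1)^n\,\Gamma(n+1/\alpha)}{n!\,\Gamma(1/\alpha)}\,(\I b)^n,$$
using $\binom{-1/\alpha}{n} = (-1)^n\,\Gamma(n+1/\alpha)/(n!\,\Gamma(1/\alpha))$. The radius of convergence being $1$, this step is precisely what requires the hypothesis $|b|<1$. Taking the real part kills the odd-$n$ terms, and for $n=2k$ the factor $(-1)^n \I^{n}$ equals $(-1)^k$, so
$$\mathrm{Re}\!\left((1+\I b)^{-1/\alpha}\right) = \sum_{k=0}^\infty \frac{(-1)^k\,\Gamma(2k+1/\alpha)}{\Gamma(2k+1)\,\Gamma(1/\alpha)}\,b^{2k}.$$
Substituting back and cancelling $\Gamma(1/\alpha)$ yields the announced formula.

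The computation is essentially mechanical. The one point that demands a little care is the coherence of the branch of $z^{-1/\alpha}$ used in the two places: both values $1\pm\I b$ lie in the open right half-plane and are complex conjugates, so the principal branch (the unique one real and positive on the positive real axis) makes the two summands in the bracket complex conjugate to each other, and this is exactly the branch on which the binomial series converges for $|\I b|<1$. No other genuine subtlety appears; in particular, the degenerate cases $\alpha=1,\beta=0$ (Cauchy) and $\alpha=2,\beta=0$ (Gaussian) reduce to the $k=0$ term of the series and are immediately verified against the classical formulas $\frac{1}{\pi c}$ and $\frac{1}{\sqrt{2\pi}}$ respectively.
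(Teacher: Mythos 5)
Your proof is correct and arrives at the identity by a slightly different route than the paper. The paper pairs the contributions from $\xi>0$ and $\xi<0$ at the start, turning the Fourier inversion integral into the real integral $\frac{1}{\pi\alpha c}\int_0^\infty \E^{-u}\cos\bigl(u\,\beta\tan(\tfrac{\pi\alpha}{2})\bigr)u^{1/\alpha-1}\DD{u}$, and then expands the cosine in power series, interchanging $\sum$ and $\int$; the hypothesis $|b|<1$ (with $b=\beta\tan(\tfrac{\pi\alpha}{2})$) is what guarantees the absolute convergence needed for that interchange, since $\Gamma(2k+1/\alpha)/\Gamma(2k+1)\sim(2k)^{1/\alpha-1}$. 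You instead evaluate each half-line integral in closed form via the complex Gamma identity $\int_0^\infty \E^{-uz}u^{s-1}\DD{u}=\Gamma(s)z^{-s}$ on $\mathrm{Re}(z)>0$, reducing the density to $\frac{\Gamma(1/\alpha)}{\pi\alpha c}\,\mathrm{Re}\bigl((1+\I b)^{-1/\alpha}\bigr)$, and the hypothesis $|b|<1$ is then exactly the radius of convergence of the generalised binomial series. The two proofs do the same computation in essentially transposed order; yours pushes the convergence question entirely onto a textbook power-series fact rather than a Fubini argument, at the cost of a brief branch-coherence check, which you address correctly.

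One small inaccuracy: the hypothesis does not literally \emph{force} $\alpha\neq 1$; it is also satisfied (in the limiting interpretation adopted by the paper) when $\alpha=1$ and $\beta=0$, which is the Cauchy case. You do verify that case separately at the end, so the conclusion is unaffected, but the opening sentence overstates the reduction.
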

\begin{proof}
Suppose first that $\alpha \neq 1$. One computes
\begin{align*}
p_{c,\alpha,\beta}(0)&=\frac{1}{2\pi} \int_{\R}\E^{\eta(\I \xi)}\DD{\xi} = \frac{1}{\pi} \int_{0}^\infty \E^{-(c\xi)^\alpha}\,\cos\left((c\xi)^\alpha \beta\,\tan\left(\frac{\alpha\pi}{2}\right)\right) \DD{\xi} \\
&=\frac{1}{\pi\alpha c} \int_{0}^\infty \E^{-u}\,\cos\left(u \beta\,\tan\left(\frac{\alpha\pi}{2}\right)\right) u^{\frac{1}{\alpha}-1}\DD{u}.
\end{align*}
Under the assumption $|\beta \tan(\frac{\alpha\pi}{2})|<1$, one can develop in power series the cosinus and change the order of summation to obtain the formula claimed; this ends the proof when $\alpha \neq 1$. If $\alpha=1$, then $|\beta \tan(\frac{\alpha\pi}{2})|< 1$ is satified if and only if $\beta=0$. In this case, one deals with the Cauchy law 
$$\frac{1}{\pi c}\,\frac{1}{1+\frac{x^2}{c^2}}\DD{x},$$ 
which has density $\frac{1}{c\pi}$ at $x=0$. This is also what is obtained by specialisation of the power series, because, if $\beta=0$, then for every $\alpha \in (0,2]$, the power series specialises to
\begin{equation*}
\frac{1}{\pi\alpha c}\,\,\Gamma\!\left(\frac{1}{\alpha}\right).\qedhere
\end{equation*}
\end{proof}
\bigskip

\section{Sums of random variables}\label{sec:sum}
In this section, we apply our main result to various examples of random variables which admit a representation in law as a sum of elementary components which are independent (Sections \ref{subsec:zeroes}, \ref{subsec:partition} and \ref{subsec:randomzeta}) or dependent (Sections \ref{subsec:dependencygraph} and \ref{subsec:markov}).

\subsection{Size of a random integer partition or plane partition}\label{subsec:partition}
To illustrate our theory of zone of controls and the related local limit theorems, we shall consider as a first example the size of a random integer partition or plane partition chosen with probability proportional to $q^{\mathrm{vol}(\lambda)}$. Let us start with \emph{integer partitions}; we refer to \cite[\S1.1]{Mac95} for the details of their combinatorics. An integer partition of size $n$ is a non-increasing sequence $\lambda = (\lambda_1\geq \lambda_2 \geq \cdots \geq \lambda_r)$ of positive integers such that $\lambda_1+\lambda_2+\cdots+\lambda_r = n$. We then denote $n=|\lambda|$, and we represent $\lambda$ by its Young diagram, which is the array of boxes with $\lambda_1$ boxes on the first row, $\lambda_2$ boxes on the second row, \emph{etc.} For instance, $\lambda = (5,5,3,2)$ is an integer partition of size $15$ represented by the Young diagram
\ytableausetup{aligntableaux=bottom}
$$
\ydiagram{2,3,5,5}\,\,.
$$
Let $\mathfrak{Y}$ be the set of all integer partitions, and $\proba_q$ be the probability measure on $\mathfrak{Y}$ which is proportional to $q^{|\lambda|}$, $q$ being a fixed parameter in $(0,1)$. The corresponding partition function is given by Euler's formula
$$Z(q) = \sum_{\lambda \in \mathfrak{Y}} q^{|\lambda|} = \prod_{n=1}^\infty \frac{1}{1-q^n}.$$
Thus, $\proba_q[\lambda] = \left(\prod_{n=1}^\infty 1-q^n\right)\,q^{|\lambda|}$. We are interested in the asymptotics of the size $S_q$ of a random integer partition taken according to the probability measure $\proba_q$. The Laplace transform of $S_q$ is
$$\esper[\E^{z S_q}] = \prod_{n=1}^\infty \frac{1-q^n}{1-q^n\E^{nz}};$$
it is well defined for $\mathrm{Re}(z)<-\log q$. This formula shows that $S_q$ has the same law as a random series
$$S_q = \sum_{n=1}^\infty nY_n,$$
where the $Y_n$'s are independent, and $Y_n$ is a geometric random variable of parameter $1-q^n$, with distribution
$\proba[Y_n=k] = (1-q^n)q^{nk}$ for any $k \in \N$.
Set $A_n=nY_n$, and $f_n(\xi) = \log \esper[\E^{\I \xi A_n}] - \frac{nq^n\,\I \xi}{1-q^n} + \frac{n^2q^n}{(1-q^n)^2}\,\frac{\xi^2}{2} $. The function $f_n$ and its two first derivatives vanish at $0$, and
$$f_n'''(\xi) = -\I n^3 \frac{(q\E^{\I \xi})^n + (q\E^{\I \xi})^{2n}}{(1-(q\E^{\I \xi})^n)^3}\qquad;\qquad |f_n'''(\xi)| \leq n^3 \frac{q^n + q^{2n}}{(1-q^n)^3}.$$
Set $M_q = \sum_{n=1}^\infty \frac{nq^n}{1-q^n}$ and $V_q = \sum_{n=1}^\infty \frac{n^2q^n}{(1-q^n)^2}$. 
\begin{lemma}\label{lem:meanvariancepartition}
The mean $M_q$ and the variance $V_q$ of the random variable $S_q$ have for asymptotic behavior
\begin{align*}
M_q &= \frac{\zeta(2)}{(\log q)^2} + \frac{1}{2\,\log q} +\frac{1}{24} + o(1) =\frac{\zeta(2)}{(1-q)^2}\,(1+o(1));\\
V_q &=\frac{2\zeta(2)}{(1- q)^3}\,(1+o(1))
\end{align*}
as $q$ goes to $1$.
\end{lemma}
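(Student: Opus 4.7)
\medskip

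\noindent\textbf{Proof plan.} The asymptotics will be extracted by a Mellin-transform argument. Writing $t=-\log q>0$, which tends to $0^+$ as $q\to 1^-$, and expanding $\frac{q^n}{1-q^n} = \sum_{k\geq 1}q^{nk}$ and $\frac{q^n}{(1-q^n)^2} = \sum_{k\geq 1}k\,q^{nk}$, I rewrite
\[
M(t):=M_{\E^{-t}}=\sum_{n,k\geq 1}n\,\E^{-nkt},\qquad V(t):=V_{\E^{-t}}=\sum_{n,k\geq 1}n^2k\,\E^{-nkt}.
\]
Taking Mellin transforms term by term via $\int_0^\infty t^{s-1}\E^{-nkt}\DD{t}=\Gamma(s)(nk)^{-s}$, and invoking the absolute convergence of the resulting Dirichlet series, I obtain the closed forms
\[
\widetilde{M}(s)=\Gamma(s)\,\zeta(s)\,\zeta(s-1),\ \ \mathrm{Re}\,s>2,\qquad \widetilde{V}(s)=\Gamma(s)\,\zeta(s-1)\,\zeta(s-2),\ \ \mathrm{Re}\,s>3.
\]

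The second step is Mellin inversion combined with a contour shift. For $M(t)$, I integrate along $\mathrm{Re}\,s=3$ and move the line of integration to $\mathrm{Re}\,s=-1/2$, picking up the three simple poles in between: at $s=2$ (from $\zeta(s-1)$) with residue $\Gamma(2)\zeta(2)=\zeta(2)$; at $s=1$ (from $\zeta(s)$) with residue $\Gamma(1)\zeta(0)=-\tfrac12$; and at $s=0$ (from $\Gamma(s)$) with residue $\zeta(0)\zeta(-1)=\tfrac{1}{24}$. This yields
\[
M(t)=\frac{\zeta(2)}{t^2}-\frac{1}{2t}+\frac{1}{24}+O(t^{1/2}).
\]
For $V(t)$ only the leading pole at $s=3$ is needed, whose residue is $\Gamma(3)\,\zeta(2)=2\zeta(2)$, producing $V(t)=2\zeta(2)/t^3\,(1+o(1))$ after shifting the contour slightly to the left of $s=3$.

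It only remains to convert $t$ into $1-q$: the Taylor expansion $-\log q=(1-q)+O((1-q)^2)$ gives $t^{-k}=(1-q)^{-k}(1+O(1-q))$ for every integer $k\geq 1$, so inserting this into the two expansions above immediately yields the formulas of the lemma (note that the subleading terms $-1/(2t)+1/24+o(1)$ in $M_q$ are $O(1/(1-q))$, hence absorbed into the factor $1+o(1)$ of the equivalent $\zeta(2)/(1-q)^2$). The only delicate point is the justification of the contour shifts, which relies on the fact that on any fixed vertical strip $|\Gamma(\sigma+\I\tau)|$ decays like $\E^{-\pi|\tau|/2}$ while $\zeta(s)$ grows at most polynomially in $|\tau|$, so the Mellin integrand decays super-polynomially along vertical lines and the horizontal segments of the rectangular contour vanish in the limit. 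This mild analytic estimate is the only real obstacle; everything else is straightforward bookkeeping of the special values $\Gamma(n)$, $\zeta(0)$ and $\zeta(-1)$.
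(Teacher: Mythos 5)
Your proof is correct, and it takes a genuinely different route from the paper's. For the mean, the paper (following Banerjee--Wilkerson) applies the operator $\tfrac{D}{\E^D-1}$ to the Lambert-type series $L(q^x)=\sum_{k\geq 1}q^{kx}$ and matches terms against an expansion of $L$ involving $\zeta(-k)$; the Hurwitz zeta values $\zeta(1-k,x)$ then encode the finite-difference calculus. Your Mellin-transform computation, identifying $\widetilde M(s)=\Gamma(s)\zeta(s)\zeta(s-1)$ and $\widetilde V(s)=\Gamma(s)\zeta(s-1)\zeta(s-2)$ and shifting the contour past the poles at $s=2,1,0$ (resp.\ $s=3$), produces the same residues and is the textbook Hardy--Ramanujan-style argument; the two methods are morally equivalent, but yours is more uniform and would deliver a full asymptotic expansion for \emph{both} $M_q$ and $V_q$ with no extra work. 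For the variance, the paper instead uses the elementary rearrangement $V_q=\frac{1}{(1-q)^3}\sum_{k\geq 1}\frac{k(q^k+q^{2k})}{(1+q+\cdots+q^{k-1})^3}$ and passes to the termwise limit $2/k^2$, which is lighter machinery but does require a (tacitly invoked) dominated-convergence justification for interchanging the limit and the sum; your contour-shift argument handles that estimate via the exponential decay of $\Gamma$ on vertical lines, which is the standard way to make it airtight. Both are valid; the Mellin route buys systematicity and explicit error terms, while the paper's route keeps the variance estimate elementary at the cost of treating the two quantities by two separate methods.
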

\begin{proof}
For the first asymptotic expansion, we follow closely \cite[Theorem 2.2]{BW17}. Introduce the series $L(q^x) = \sum_{k=1}^\infty q^{kx}=\frac{q^x}{1-q^x}$, and consider the operator $D = \frac{\partial}{\partial x}$, and 
$$\frac{D}{\E^D-1} = \sum_{n=0}^\infty \frac{B_n\,D^n}{n!},$$ 
where the $B_n$'s are the Bernoulli numbers. We have
\begin{align*}
\left(\frac{D}{\E^D-1}\right) (L(q^x)) &= \sum_{k=1}^\infty \left(\frac{D}{\E^D-1}\right)(q^{kx}) = \sum_{k=1}^\infty \sum_{n=0}^\infty \frac{B_n}{n!}\,D^n(q^{kx}) \\
&= \sum_{k=1}^\infty \left(\sum_{n=0}^\infty \frac{B_n (k \log q)^n}{n!}\right)\,q^{kx} = (-\log q) \sum_{k=1}^\infty \frac{k\,q^{kx}}{1-q^k}.
\end{align*}
On the other hand, we have the expansion in powers of $x \log q$:
$$L(q^x) = -\frac{1}{x\log q} + \sum_{k=0}^\infty \frac{\zeta(-k)}{k!}\,(x\log q)^k.$$
We have the relation $\frac{D}{\E^D-1}(x^k) = -k\,\zeta(1-k,x)$ where $\zeta(s,x)=\sum_{n=0}^\infty \frac{1}{(n+x)^s}$ is Hurwitz' zeta function (extended to a meromorphic function of the complex parameter $s$). Therefore, we obtain:
\begin{align*}
\sum_{k=1}^\infty \frac{k\,q^{kx}}{1-q^k} &= \frac{\zeta(2,x)}{(\log q)^2} + \frac{1}{2\log q}+\sum_{k=0}^\infty \frac{\zeta(-k-1)\,\zeta(-k,x)}{k!}\,(\log q)^{k},
\end{align*}
hence the first equivalent by taking $x=1$. 
For the equivalent of the variance, let us remark that
\begin{align}
V_q &= \sum_{n=1}^\infty \frac{n^2 q^n}{(1-q^n)^2} = \sum_{n=1}^\infty \sum_{k=1}^\infty n^2 k\, q^{nk} = \sum_{k=1}^\infty k\,\frac{q^k+q^{2k}}{(1-q^k)^3} \nonumber \\
&= \frac{1}{(1-q)^3}\sum_{k=1}^\infty \frac{k}{(1+q+\cdots+q^{k-1})^3}\,(q^{k}+q^{2k}).\label{eq:variancepartition}
\end{align}
As $q$ goes to $1$, each term of the series in \eqref{eq:variancepartition} converges to $\frac{2}{k^2}$, and it is an easy exercice to see that one can sum these limits. 
\end{proof}
\medskip

Set $X_q = \frac{S_q-M_q}{(V_q)^{4/9}}$. We have:
\begin{align}
\log \esper[\E^{\I \xi X_q}] + \frac{(V_q)^{1/9}\xi^2}{2} &= \sum_{n=1}^\infty f_n\left(\frac{\xi}{(V_q)^{4/9}}\right) \nonumber \\
\left|\log \esper[\E^{\I \xi X_q}] + \frac{(V_q)^{1/9}\xi^2}{2}\right| &\leq \frac{|\xi|^3}{6\,(V_q)^{4/3}} \sum_{n=1}^\infty \frac{n^3(q^n+q^{2n})}{(1-q^n)^3}.\label{eq:thirdmomentpartition}
\end{align}
As $q$ goes to $1$, the series in Equation \eqref{eq:thirdmomentpartition} behaves as 
\begin{align*}
\sum_{n=1}^\infty &\frac{n^3(q^n+q^{2n})}{(1-q^n)^3} = \sum_{n=1}^\infty \sum_{k=1}^\infty n^3\,\frac{k(k+1)}{2} (q^{kn} + q^{(k+1)n}) \\
&= \sum_{k=1}^\infty \frac{k(k+1)}{2} \left(\frac{q^k(1+4q^k+q^{2k})}{(1-q^k)^4}+\frac{q^{k+1}(1+4q^{k+1}+q^{2k+2})}{(1-q^{k+1})^4}\right)\\
&=\frac{3(1+o(1))}{(1-q)^4} \sum_{k=1}^\infty \left(\frac{1}{k^2}+\frac{1}{k^3} + \frac{1}{(k+1)^2} - \frac{1}{(k+1)^3}\right) = \frac{6\,\zeta(2)\,(1+o(1))}{(1-q)^4}.
\end{align*}
It follows that for any constant $C>\frac{1}{2^{4/3}(\zeta(2))^{1/3}}$, there exists $q_0 \in (0,1)$ such that if $q \geq q_0$, then 
\begin{align*}
\left|\log \esper[\E^{\I \xi X_q}] + \frac{(V_q)^{1/9}\xi^2}{2}\right| &\leq C|\xi^3|; \\
|\theta_q(\xi)-1| &\leq C|\xi|^3\,\exp(C|\xi|^3),
\end{align*}
uniformly on the parameter $\xi \in \R$, with $\theta_q(\xi)=\esper[\E^{\I\xi X_q}]\,\E^{\frac{(V_q)^{1/9}\xi^2}{2}}$. Hence, the family $(X_q)_{q \in (0,1)}$ has a zone of control of mod-Gaussian convergence for the parameter $t_q = (V_q)^{1/9}$, with index $(3,3)$ and with size $O((t_q)^{3/2})=O((V_q)^{1/6})$ if one wants Condition \ref{hyp:zone2} to be satisfied. We conclude with Theorem \ref{thm:locallimit1} and \cite[Theorem 2.16]{FMN17}:
\begin{proposition}
Let $S_q$ be the size of a random integer partition chosen with probability proportional to $q^{|\lambda|}$, and $M_q$ and $V_q$ be defined as in Lemma \ref{lem:meanvariancepartition}. As $q$ goes to $1$, the random variable $Y_q = (S_q-M_q)/\sqrt{V_q}$ converges in law to the standard Gaussian distribution, and one has more precisely:
$$\dkol(Y_q\,,\,\gauss) = O\!\left((1-q)^{1/2}\right).$$
Moreover, one has the following local limit theorem: for any exponent $\delta \in (0,\frac{1}{2})$ and any Jordan measurable subset $B$ with $m(B)>0$,
$$\lim_{q \to 1} \,(1-q)^{-\delta}\,\,\proba\!\left[Y_q - x \in (1-q)^{\delta}\,B\right] = \frac{\E^{-\frac{x^2}{2}}}{\sqrt{2\pi}}\,m(B).$$
\end{proposition}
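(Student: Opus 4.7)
The plan is to assemble the ingredients already built up in the paragraphs preceding the statement. The bound
$$|\theta_q(\xi) - 1| \le C |\xi|^3 \exp(C |\xi|^3),$$
valid on all of $\R$ for $q$ close to $1$ with $\theta_q(\xi) = \esper[\E^{\I \xi X_q}] \E^{(V_q)^{1/9} \xi^2/2}$ and $X_q = (S_q - M_q)/(V_q)^{4/9}$, gives a zone of control in the sense of Definition \ref{def:zone} with $\alpha = 2$, $\nu = \omega = 3$ and $t_q = (V_q)^{1/9}$. Since the estimate is uniform on $\R$, Condition \ref{hyp:zone1} holds for any $\gamma > -1/\alpha$, so I would enforce Condition \ref{hyp:zone2} by taking $\gamma = 1/(\omega - \alpha) = 1$, which is the maximal admissible value.

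First, the canonical renormalisation of Proposition \ref{prop:convlaw} is $X_q / (t_q)^{1/\alpha} = X_q/(V_q)^{1/18} = (S_q - M_q)/\sqrt{V_q} = Y_q$, so that proposition delivers the central limit theorem $Y_q \rightharpoonup \gauss$ at once. Next, \cite[Theorem 2.16]{FMN17} translates the zone of control into a Berry--Esseen type estimate: with $\nu = 3$ and $\alpha = 2$, it yields $\dkol(Y_q, \gauss) = O((t_q)^{-\nu/\alpha}) = O((V_q)^{-1/6})$, and the asymptotics $V_q \sim 2\zeta(2)(1-q)^{-3}$ established in Lemma \ref{lem:meanvariancepartition} then give the announced $O((1-q)^{1/2})$.

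Third, for the local limit theorem I would invoke Theorem \ref{thm:locallimit1}: it applies for any exponent $\delta' \in (0, \gamma + 1/\alpha) = (0, 3/2)$ and any Jordan measurable subset $B' \subset \R$ with $m(B') > 0$, yielding
$$\lim_{q \to 1} (t_q)^{\delta'}\,\proba\!\left[Y_q - x \in (t_q)^{-\delta'} B'\right] = \frac{\E^{-x^2/2}}{\sqrt{2\pi}}\,m(B').$$
Writing $t_q = c_0 (1-q)^{-1/3}(1+o(1))$ with $c_0 = (2\zeta(2))^{1/9}$, the scale $(t_q)^{-\delta'}$ is asymptotic to $c_0^{-\delta'}(1-q)^{\delta'/3}$. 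Setting $\delta = \delta'/3 \in (0, 1/2)$ and applying the display above to the dilated set $B = c_0^{-\delta'} B'$, whose Lebesgue measure is $c_0^{-\delta'} m(B')$, reorganises the identity into the precise form claimed.

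The proof requires no genuine obstacle beyond this bookkeeping: the analytical work (the cumulant estimates, the asymptotic expansions of $M_q$ and $V_q$, and the uniform cubic bound on $\theta_q - 1$) has been completed before the statement, and the mod-stable/local-limit machinery of Sections \ref{sec:testfunction} and \ref{sec:zone} is specifically designed to turn such a zone of control into exactly the conclusion claimed. The only mildly delicate point is verifying that Condition \ref{hyp:zone2} with $\omega = 3$ forces $\gamma \le 1$ (so that larger $\gamma$ cannot be used to extend the range of $\delta$ beyond $(0, 1/2)$), and that choosing any $\omega > 3$ only decreases $\gamma + 1/\alpha$; this justifies the stated endpoint $1/2$.
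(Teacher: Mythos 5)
Your proposal is correct and follows the same route as the paper: derive a zone of control with index $(3,3)$ and maximal admissible exponent $\gamma = 1$ from the uniform cubic bound on $\theta_q - 1$, then read off the Kolmogorov-distance estimate from \cite[Theorem 2.16]{FMN17} and the local limit theorem from Theorem~\ref{thm:locallimit1}, translating the scale $t_q = (V_q)^{1/9}$ back into powers of $1-q$ via Lemma~\ref{lem:meanvariancepartition}. One small remark on the final bookkeeping: converting $(t_q)^{-\delta'}$ into $(1-q)^{\delta}$ involves a $(1+o(1))$ factor in addition to the constant dilation (since $t_q \sim (2\zeta(2))^{1/9}(1-q)^{-1/3}$ is only an asymptotic equivalence), which is most cleanly absorbed by observing that the test-function estimate in the proof of Theorem~\ref{thm:locallimit1} holds for any sequence of scales $s_n \to \infty$ with $s_n = O((t_n)^{\delta'})$ for some $\delta' < \gamma + 1/\alpha$, not merely $s_n = (t_n)^{\delta'}$.
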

\noindent Note that this result does not allow one to go up to the discrete scale. Indeed, the estimate of the variance shows that the Gaussian approximation for $S_q-M_q$ holds at scales $(1-q)^{\delta-3/2}$ with $\delta < \frac{1}{2}$, so one cannot describe what happens for the scales $(1-q)^{-\gamma}$ with $\gamma \in (0,1]$.

\begin{remark}
The asymptotics of the expectations $M_q$ are easy to retrieve from the Hardy--Ramanujan asymptotic formula for the number $p(n)$ of integer partitions of size $n$:
$$p(n) = \frac{(1+o(1))}{4n\sqrt{3}}\,\exp\!\left(\pi \sqrt{\frac{2n}{3}}\right),$$
see \cite{HR18,Rad38}. It implies that the probability measure $\proba_q[\lambda] = \frac{q^{|\lambda|}}{Z(\lambda)}$ is concentrated on partitions of size $n$ such that $p(n)\,q^n$ is maximal, that is roughly with
$$f_q(n)=\pi \sqrt{\frac{2n}{3}} + n \log q $$
maximal. When $q$ is fixed, the maximal of $f_q(n)$ is attained at $n=\frac{\pi^2}{6(\log q)^2}=\frac{\zeta(2)}{(\log q)^2}$; this is the leading term in the asymptotic expansion of $M_q$.
\end{remark}
\medskip

Similarly, one can study the size of a random \emph{plane partition} chosen with a probability proportional to $q^{\mathrm{vol}(\lambda)}$. A plane partition is a sequence $\lambda = (\lambda^{(1)},\lambda^{(2)},\ldots,\lambda^{(s)})$ of non-empty integer partitions such that the following inequalities hold: 
$$\forall i \leq s-1,\,\,\forall j \leq \ell(\lambda^{(i)}),\,\,\,\lambda^{(i)}_j \geq \lambda^{(i+1)}_j .$$
We refer to \cite[Chapter 11]{And76} for the combinatorics of these objects. They can be represented by $3$-dimensional Young diagrams, so for instance,
\vspace{2mm}
\newcounter{x}
\newcounter{y}
\newcounter{z}
\newcommand\xaxis{210}
\newcommand\yaxis{-30}
\newcommand\zaxis{90}
\newcommand\topside[3]{
  \fill[fill=NavyBlue, draw=black,shift={(\xaxis:#1)},shift={(\yaxis:#2)},
  shift={(\zaxis:#3)}] (0,0) -- (30:1) -- (0,1) --(150:1)--(0,0);
}
\newcommand\leftside[3]{
  \fill[fill=red!50!white, draw=black,shift={(\xaxis:#1)},shift={(\yaxis:#2)},
  shift={(\zaxis:#3)}] (0,0) -- (0,-1) -- (210:1) --(150:1)--(0,0);
}
\newcommand\rightside[3]{
  \fill[fill=NavyBlue!50!white, draw=black,shift={(\xaxis:#1)},shift={(\yaxis:#2)},
  shift={(\zaxis:#3)}] (0,0) -- (30:1) -- (-30:1) --(0,-1)--(0,0);
}
\newcommand\cube[3]{
  \topside{#1}{#2}{#3} \leftside{#1}{#2}{#3} \rightside{#1}{#2}{#3}
}
\newcommand\planepartition[1]{
 \setcounter{x}{-1}
  \foreach \a in {#1} {
    \addtocounter{x}{1}
    \setcounter{y}{-1}
    \foreach \b in \a {
      \addtocounter{y}{1}
      \setcounter{z}{-1}
      \foreach \c in {1,...,\b} {
        \addtocounter{z}{1}
        \cube{\value{x}}{\value{y}}{\value{z}}
      }
    }
  }
}

\begin{center}
\begin{tikzpicture}[scale=0.8]
\planepartition{{4,4,3,2,2},{4,2,2,1},{2,2},{1},{1}}
\end{tikzpicture}\vspace{2mm}
\end{center}
is the diagram of the plane partition $((5,5,3,2),(4,3,1,1),(2,2),(1),(1))$. The volume of a plane partition is the number of boxes of its diagram, that is $\mathrm{vol}(\lambda)=|\lambda^{(1)}| + \cdots + |\lambda^{(r)}|$. The generating series of the volumes of the plane partitions is given by MacMahon's formula:
$$\sum_{\lambda \text{ plane partition}}q^{\mathrm{vol}(\lambda)} = \prod_{n=1}^\infty \frac{1}{(1-q^n)^n}.$$
Therefore, if $S_q'$ is the size of a random plane partition chosen according to the probability measure $\proba_q'[\lambda] = \frac{q^{\mathrm{vol}(\lambda)}}{Z'(q)} = (\prod_{n=1}^\infty (1-q^n)^n)\,q^{\mathrm{vol}(\lambda)}$, then the Laplace transform of $S_q'$ is
$$\esper[\E^{z S_q'}] = \prod_{n=1}^\infty \left(\frac{1-q^n}{1-q^n\E^{nz}}\right)^n.$$
Thus, $S_q'$ admits a representation in law as a random series of independent random variables
$$S_q' = \sum_{n=1}^\infty \sum_{i=1}^n n\,Y_{n,i},$$
where $Y_{n,i}$ is a geometric random variable with parameter $(1-q^n)$. Set $M_q' = \sum_{n=1}^\infty \frac{n^2\,q^n}{1-q^n}$ and $V_q' = \sum_{n=1}^\infty \frac{n^3\,q^n}{(1-q^n)^2}$. Similar arguments as those used in the proof of Lemma \ref{lem:meanvariancepartition} show that
\begin{align*}
M_q' &= \frac{2\,\zeta(3)}{(-\log q)^3} + \frac{1}{12\,\log q} +o(1) = \frac{2\,\zeta(3)}{(1- q)^3}\,(1+o(1));\\
V_q' &= \frac{6\,\zeta(3)}{(1- q)^4}\,(1+o(1))
\end{align*}
as $q$ goes to $1$. Again, the asymptotics of $M_q'$ are related to the asymptotic formula for the number on plane partitions with volume $n$:
$$p'(n) = \frac{(1+o(1))\,(\zeta(3))^{7/36}}{\sqrt{12\pi}}\,\left(\frac{n}{2}\right)^{-\frac{25}{36}}\,\exp\!\left(3(\zeta(3))^{1/3}\left(\frac{n}{2}\right)^{2/3}+\zeta'(-1)\right),$$
see \cite{Wright31,KM06}. Set $$X_q' = \frac{S_q'-M_q'}{(V_q')^{5/12}}.$$ Since $S_q'$ involves the same geometric random variables as before, we can perform the same computations as before to prove that $(X_q')_{q \in (0,1)}$ admits a zone of control of mod-Gaussian convergence for the parameter $t_q = (V_q')^{1/12}$. This zone of control has again index $(3,3)$, and its size can be taken equal to $O((V_q')^{1/8})$. We conclude:
\begin{proposition}
Let $S_q'$ be the size of a random plane partition chosen with probability proportional to $q^{\mathrm{vol}(\lambda)}$, and $M_q'$ and $V_q'$ be  the expectation and the variance of $S_q'$. As $q$ goes to $1$, $Y_q' = \frac{S_q'-M_q'}{\sqrt{V_q'}}$ converges in law to the standard Gaussian distribution, and one has more precisely:
$$\dkol(Y_q'\,,\,\gauss) = O\!\left((1-q)^{1/2}\right).$$
Moreover, for any exponent $\delta \in (0,\frac{1}{2})$ and any Jordan measurable subset $B$ with $m(B)>0$,
$$\lim_{q \to 1} \,(1-q)^{-\delta}\,\,\proba\!\left[Y_q' - x \in(1-q)^{\delta}\,B\right] = \frac{\E^{-\frac{x^2}{2}}}{\sqrt{2\pi}}\,m(B).$$
\end{proposition}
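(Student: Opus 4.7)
The proof is a direct transcription of the integer partition argument. The starting point is the identity in law $S_q' \stackrel{d}{=} \sum_{n=1}^\infty \sum_{i=1}^n n\,Y_{n,i}$, where the $Y_{n,i}$ are independent geometric variables of parameter $1-q^n$. For each independent component, I would introduce the centered-and-quadratic-subtracted function
$$f_{n,i}(\xi)=\log\esper[\E^{\I\xi n Y_{n,i}}]-\I\xi\,\frac{nq^n}{1-q^n}+\frac{n^2q^n}{(1-q^n)^2}\,\frac{\xi^2}{2},$$
which vanishes to order two at $0$ and whose third derivative is bounded by $n^3(q^n+q^{2n})/(1-q^n)^3$ (identical computation to the integer case, since the summands are of the same form $nY_{n,i}$). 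Adding the $n$ i.i.d.\ copies at each level $n$ and summing over $n$ then yields the Taylor-type estimate
$$\left|\log\esper[\E^{\I\xi(S_q'-M_q')}]+V_q'\,\frac{\xi^2}{2}\right|\le\frac{|\xi|^3}{6}\sum_{n=1}^\infty\frac{n^4(q^n+q^{2n})}{(1-q^n)^3},$$
the extra factor of $n$ in the summand, compared with Equation \eqref{eq:thirdmomentpartition}, reflecting the multiplicity of independent components at each level.

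The core estimate is then $\sum_n n^4(q^n+q^{2n})/(1-q^n)^3 = O((1-q)^{-5})$ as $q \to 1$. I would obtain it by expanding $(1-q^n)^{-3}=\sum_{k\ge 0}\binom{k+2}{2}q^{nk}$, swapping the two summations, and applying the elementary equivalent $\sum_{n\ge 1} n^4 q^{nm}\sim 24/(m(1-q))^5$ as $q\to 1$ to each inner series; the residual $k$-series converges because the factor $(k+1)^{-5}$ dominates $\binom{k+2}{2}$. Combined with $V_q'\sim 6\zeta(3)(1-q)^{-4}$, the normalisation $X_q'=(S_q'-M_q')/(V_q')^{5/12}$, for which $(V_q')^{5/4}\sim c\,(1-q)^{-5}$, absorbs the divergence and produces a uniform bound of the form
$$|\theta_q(\xi)-1|\le C|\xi|^3\exp(C|\xi|^3)\qquad\text{for all }\xi\in\R,$$
where $\theta_q(\xi)$ is the rescaled residue as in \S\ref{subsec:partition}. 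This is a zone of control of mod-Gaussian convergence with index $(\nu,\omega)=(3,3)$ and validity on the entire real line; in particular, after enforcing Condition \ref{hyp:zone2} with $\alpha=2$, one may take $\gamma=1$.

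With this zone of control in hand, the convergence in law $Y_q' \rightharpoonup \gauss$ and the Berry--Esseen estimate $\dkol(Y_q',\gauss)=O((1-q)^{1/2})$ follow from Proposition \ref{prop:convlaw} and \cite[Theorem 2.16]{FMN17} applied to $X_q'$. For the local limit theorem, one invokes Theorem \ref{thm:locallimit1} with $\alpha=2$, $\omega=3$, $\gamma=1$: the admissible Fourier exponents are those in $(0,\gamma+1/\alpha)=(0,3/2)$, and translating back to the natural scale $(1-q)^\delta$ via the asymptotic of $t_q$ as a power of $(1-q)^{-1}$ covers the range $\delta\in(0,1/2)$ claimed in the proposition.

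The only genuinely non-routine step is the asymptotic evaluation of $\sum_n n^4(q^n+q^{2n})/(1-q^n)^3$: the extra factor of $n$ relative to the integer partition computation must be tracked carefully throughout, and one must justify both the interchange of summations and the convergence of the residual $k$-series. Once this estimate is secured, the rest is a mechanical application of Theorem \ref{thm:locallimit1}, the arguments being lifted verbatim from the integer partition case.
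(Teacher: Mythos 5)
Your proof follows the paper's argument step for step: the representation $S_q'\stackrel{d}{=}\sum_{n\ge 1}\sum_{i=1}^n n Y_{n,i}$, the third-derivative bound $|f_{n,i}'''(\xi)|\le n^3(q^n+q^{2n})/(1-q^n)^3$, the extra factor $n$ from the multiplicity at each level, and the asymptotic $\sum_n n^4(q^n+q^{2n})/(1-q^n)^3\sim 24\zeta(3)(1-q)^{-5}$ (which, combined with $V_q'\sim 6\zeta(3)(1-q)^{-4}$, makes $(V_q')^{5/4}$ exactly the right normalising power). Up to this point your computation is correct and identical in spirit to the paper's.

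The one step you do not actually carry out is the translation from the $t_q$-scale of Theorem \ref{thm:locallimit1} to the natural scale $(1-q)^{-\delta}$; you merely assert it ``covers the range $\delta\in(0,1/2)$''. If you do it explicitly you will notice a discrepancy. With the normalization $X_q'=(S_q'-M_q')/(V_q')^{5/12}$ the mod-Gaussian parameter is the variance $t_q=\Var(X_q')=(V_q')^{1/6}\sim c\,(1-q)^{-2/3}$ (not $\sim(1-q)^{-1/3}$ as in the integer-partition case, where $V_q\sim(1-q)^{-3}$ rather than $(1-q)^{-4}$). Pushing the admissible range $\delta'\in(0,\gamma+1/\alpha)=(0,3/2)$ through this asymptotic gives $\delta\in(0,1)$, and the Berry--Esseen bound is $O((t_q)^{-3/2})=O(1-q)$; both are \emph{strictly stronger} than the proposition's $\delta<1/2$ and $O((1-q)^{1/2})$. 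The weaker stated bounds of course follow a fortiori, so your argument does prove the proposition; but you should be aware that the statement appears to be suboptimal. The source of the confusion is that the paper's text asserts $t_q=(V_q')^{1/12}$ (which would reproduce the exponents in the statement), yet this value is incompatible with the displayed normalization $(V_q')^{5/12}$, which forces $t_q=(V_q')^{1/6}$; one of the two is an arithmetical slip, most likely from carrying over the integer-partition scaling $t_q\sim(1-q)^{-1/3}$ without adjusting for the different power of $(1-q)$ in $V_q'$.
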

\medskip

\subsection{Determinantal point processes and zeroes of a random analytic series}\label{subsec:zeroes} The determinantal point processes form another framework which often yields mod-Gaussian random variables satisfying Theorem \ref{thm:locallimit1}. Consider a locally compact, separable and complete metric space $\mathfrak{X}$ endowed with a locally finite measure $\lambda$, and a Hermitian non-negative linear operator $\mathscr{K} : \leb^2(\mathfrak{X},\lambda) \to \leb^2(\mathfrak{X},\lambda)$ such that for any relatively compact subset $A \subset \mathfrak{X}$, the induced operator $\mathscr{K}_A = 1_A\,\mathscr{K}\,1_A$ on $\leb^2(A,\lambda_{|A})$ is a trace class operator with spectrum included in $[0,1]$. The operator $\mathscr{K}$ can then be represented by a Hermitian locally square-integrable kernel $K$:
$$(\mathscr{K}f)(x) = \int_\mathfrak{X} K(x,y)\,f(y)\,\lambda(\!\DD{y}).$$
In this setting, there is a unique law of random point process $M=\sum_{i \in I} \delta_{X_i}$ on $\mathfrak{X}$ such that the correlation functions of $M$ with respect to the reference measure $\lambda$ write as
$$\rho_n(x_1,\ldots,x_n) = \det(K(x_i,x_j))_{1\leq i,j\leq n}.$$
One says that $M$ is the determinantal point process associated to the kernel $K$, see for instance \cite{Sosh00,Joh05} and \cite[Chapter 4]{HKPV09} for details. For any relatively compact set $A$, the number of points $M(A)$ of the random point process that falls in $A$ writes then as
$$M(A) =_{(\mathrm{law})} \sum_{j\in J} \mathrm{Ber}(p_{A,j}),$$
where the $p_{A,j}$'s are the eigenvalues of the trace class integral operator $\mathscr{K}_A$, and the Bernoulli random variables are independent; see \cite[Theorem 4.5.3]{HKPV09}. 

\begin{proposition}\label{prop:determinantal}
 We consider a determinantal point process $M$ as above, with a continuous kernel $K$ that is locally square-integrable but not square-integrable: $\int_{\spa^2} |K(x,y)|^2\,\lambda(\!\DD{x})\,\lambda(\!\DD{y})=+\infty$. We also fix a growing sequence $(A_n)_{n \in \N}$ of relatively compact subsets of $\spa$ such that $\bigsqcup_{n \in \N} A_n = \spa$, and such that the ratio 
 $$r_n = \left(\frac{\int_{A_n} K(x,x)\,\lambda(\!\DD{x})}{\int_{(A_n)^2} |K(x,y)|^2\,\lambda(\!\DD{x})\,\lambda(\!\DD{y})}\right) \to_{n \to +\infty} r $$
 admits a limit $r \in (1,+\infty]$ (we allow $r=+\infty$, and we shall see that $r_n \geq 1$ for any $n \in \N$). Then, with $m_n = \esper[M(A_n)]$ and $v_n = \Var(M(A_n))$, we have mod-Gaussian convergence of $X_n = (M(A_n)-m_n)/(v_n)^{1/3}$ with parameters $t_n=(v_n)^{1/3}$, and with a zone of control of size $O((v_n)^{1/3})$ and with index $(3,3)$. Therefore, for any $\delta \in (0,\frac{1}{2})$ and any Jordan measurable subset $B$ with $m(B)>0$,
 $$\lim_{n \to \infty} (v_n)^{\frac{1}{2}-\delta}\,\proba\!\left[M(A_n) - m_n - x(v_n)^{\frac{1}{2}} \in (v_n)^{\delta}\,B\right] = \frac{\E^{-\frac{x^2}{2}}}{\sqrt{2\pi}}\,m(B).$$
 If $Y_n = \frac{M(A_n)-m_n}{\sqrt{v_n}}$, then we also have $\dkol(Y_n,\gauss)=O((v_n)^{-1/2})$.
 \end{proposition}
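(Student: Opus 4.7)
My plan is to exploit the Bernoulli representation $M(A_n) =_{(\mathrm{law})} \sum_{j \in J_n}\mathrm{Ber}(p_{A_n,j})$ recalled just before the proposition, with $p_{A_n,j}\in[0,1]$ the eigenvalues of $\mathscr{K}_{A_n}$. I first identify $m_n = \tr(\mathscr{K}_{A_n}) = \int_{A_n}K(x,x)\,\lambda(\!\DD{x})$ and $v_n = m_n - \tr((\mathscr{K}_{A_n})^2) = m_n - \int_{(A_n)^2}|K(x,y)|^2\,\lambda(\!\DD{x})\,\lambda(\!\DD{y})$. Since $p_{A_n,j}\in[0,1]$, one has $\sum_j (p_{A_n,j})^2\le \sum_j p_{A_n,j}$, hence $r_n\ge 1$ always, and the identity $v_n=m_n(1-(r_n)^{-1})$ together with the non-square-integrability of $K$ (which forces $\tr((\mathscr{K}_{A_n})^2)\to\infty$, hence $m_n\to\infty$) and the assumption $r>1$ yields $v_n\to\infty$.

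The crux of the proof is a cubic Taylor estimate, uniform in $p\in[0,1]$, for
$$g_p(\xi) = \log(1-p+p\E^{\I\xi}) - \I p \xi + \tfrac{1}{2}p(1-p)\,\xi^2,$$
which satisfies $g_p(0)=g_p'(0)=g_p''(0)=0$ and $g_p'''(\xi) = \I\, u(1-u)(1-2u)$ with $u=p\E^{\I\xi}/(1-p+p\E^{\I\xi})$. On the zone $|\xi|\le \pi/2$, the identity $|1-p+p\E^{\I\xi}|^2 = 1-4p(1-p)\sin^2(\xi/2) \ge 1/2$ will produce $|u(1-u)|\le 2p(1-p)$ and $|1-2u|\le \sqrt{2}$, hence $|g_p'''(\xi)| \le 2\sqrt{2}\,p(1-p)$, and by Taylor $|g_p(\xi)| \le p(1-p)\,|\xi|^3/2$. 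Setting $X_n=(M(A_n)-m_n)/(v_n)^{1/3}$ and $t_n=(v_n)^{1/3}$ and summing over $j$, this will yield
$$\left|\log \esper[\E^{\I\xi X_n}] + \frac{t_n\,\xi^2}{2}\right| = \left|\sum_j g_{p_{A_n,j}}\!\!\left(\frac{\xi}{(v_n)^{1/3}}\right)\right| \le \frac{|\xi|^3}{2}$$
on the zone $|\xi|\le (\pi/2)\,t_n$; applying $|\E^h-1|\le |h|\,\E^{|h|}$ and shrinking the constant $K$ to fulfil \ref{hyp:zone2} with $\alpha=2$ and $c=1/\sqrt{2}$ will then give a zone of control of size $O(t_n)$ with index $(\nu,\omega)=(3,3)$ and $\gamma=1$. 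Mod-Gaussian convergence of $(X_n)$ with parameters $(t_n)$ and limit $\psi\equiv 1$ follows at once from the local uniform convergence $\theta_n\to 1$.

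Once the zone of control is in place, Theorem \ref{thm:locallimit1} applies for every $\delta\in(0,\gamma+1/\alpha)=(0,3/2)$. With $Y_n = X_n/\sqrt{t_n} = (M(A_n)-m_n)/\sqrt{v_n}$, the change of variables $\delta' = \tfrac{1}{2}-\tfrac{\delta}{3} \in (0,\tfrac{1}{2})$ rewrites the conclusion of Theorem \ref{thm:locallimit1} exactly as the local limit theorem claimed in the proposition, while the Kolmogorov bound $\dkol(Y_n,\gauss) = O((v_n)^{-1/2})$ follows from \cite[Theorem 2.16]{FMN17} applied to the same zone of control. The main technical hurdle is the uniform-in-$p$ cubic Taylor estimate for $g_p$: the singularities of $\log(1-p+p\E^{\I\xi})$ approach $\xi = \pi$ as $p\to 1/2$, so no global bound is possible, but confining $\xi$ to $|\xi|\le \pi/2$ is harmless since the zone of control only needs to grow like $t_n$ to reach the optimal exponent $\gamma = 1$.
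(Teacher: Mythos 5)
Your proof is correct and follows the same macroscopic scheme as the paper's: Bernoulli representation of $M(A_n)$, identification of $m_n$, $v_n$ and $r_n$, a uniform-in-$p$ cubic estimate for $g_p(\zeta)=\log(1-p+p\E^{\I\zeta})-\I p\zeta+\tfrac12 p(1-p)\zeta^2$, summation to a zone of control of size $O(t_n)$ with index $(3,3)$, and then Theorem~\ref{thm:locallimit1} together with the Kolmogorov estimate of \cite{FMN17}. The step where you genuinely diverge is the cubic estimate. The paper truncates the power series of $\log(1+t)$ and obtains $|g_p(\zeta)|\le A\,p\,|\zeta|^3$ for $|\zeta|\le c$; after summation this gives $A\,(m_n/v_n)\,|\xi|^3$, and one has to invoke $m_n/v_n\to r/(r-1)$ to turn this into constants $K_1=K_2=2A\,r/(r-1)$. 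You instead compute $g_p'''=-\I\,u(1-u)(1-2u)$ (you wrote $+\I$, a harmless sign slip since only $|g_p'''|$ matters) and use the identity $|1-p+p\E^{\I\zeta}|^2=1-4p(1-p)\sin^2(\zeta/2)\ge 1/2$ on $|\zeta|\le\pi/2$ to get $|g_p(\zeta)|\le\tfrac12 p(1-p)|\zeta|^3$. The extra factor $(1-p)$ is precisely what the summation rewards: $\sum_j p_{n,j}(1-p_{n,j})=v_n$ and you land on $|\log\theta_n(\xi)|\le\tfrac12|\xi|^3$ with $K_1=K_2=\tfrac12$ \emph{independent of $r$}. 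This is a mild but real gain in cleanness over the paper's constants, which blow up as $r\downarrow 1$; both routes however deliver the same $\gamma=1$, hence the same range $\delta\in(0,\tfrac32)$ in Theorem~\ref{thm:locallimit1}, and your change of variables to the $(v_n)^{\delta}$ scaling of the statement is correctly worked out.

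One small overreach: you claim that mod-Gaussian convergence of $(X_n)$ with limiting function $\psi\equiv 1$ ``follows at once from the local uniform convergence $\theta_n\to 1$,'' but you have not established $\theta_n\to 1$. Your bound $|\theta_n(\xi)-1|\le\tfrac12|\xi|^3\E^{|\xi|^3/2}$ is uniform in $n$ but does not tend to $0$; in fact the leading term of $\log\theta_n(\xi)$ is $\tfrac{\xi^3}{6}\,g_p'''(0)$ summed, i.e.\ $-\tfrac{\I\xi^3}{6}\,\kappa^{(3)}(M(A_n))/v_n$, and the ratio $\kappa^{(3)}(M(A_n))/v_n$ need not converge (and certainly need not vanish) under the stated hypotheses. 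As the paper remarks after Definition~\ref{def:zone}, a zone of control does not imply mod-$\phi$ convergence even when $\gamma\ge 0$. This gap is inconsequential here, since Theorem~\ref{thm:locallimit1} and the Kolmogorov bound of \cite[Theorem 2.16]{FMN17} rest solely on the zone of control, which you do establish correctly -- and indeed the paper's own proof only verifies the zone of control as well -- but the sentence as written is not justified.
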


\begin{proof}
Denote $(p_{n,j})_{j \in \N}$ the non-increasing sequence of eigenvalues of the compact operator $\mathscr{K}_{A_n}$, these eigenvalues being counted with multiplicity; they all belong to $[0,1]$ by \cite[Theorem 3]{Sosh00}. The expectation of $M(A_n)$ is
$$m_n= \sum_{j \in \N} p_{n,j} =\int_{A_n} \rho_1(x)\,\lambda(\!\DD{x}) = \int_{A_n} K(x,x)\,\lambda(\!\DD{x}) = \sum_{j \in \N} p_{n,j},$$
and its variance is
\begin{align*}
v_n&= \sum_{j \in \N} p_{n,j}(1-p_{n,j}) =  \esper[M(A_n)] + \esper[M(A_n)(M(A_n)-1)] - (\esper[M(A_n)])^2 \\
&= \int_{A_n} K(x,x)\,\lambda(\!\DD{x}) + \int_{(A_n)^2} \left(\det\left(\begin{smallmatrix}K(x,x) & K(x,y) \\ K(y,x) & K(y,y)\end{smallmatrix} \right) - K(x,x)\,K(y,y) \right)\,\lambda(\!\DD{x})\,\lambda(\!\DD{y}) \\
&= \int_{A_n} K(x,x)\,\lambda(\!\DD{x}) - \int_{(A_n)^2} |K(x,y)|^2\,\lambda(\!\DD{x})\,\lambda(\!\DD{y}).
\end{align*}
In particular, we always have $r_n = \frac{m_n}{m_n - v_n} \geq 1$. Since $K$ is not square-integrable on $\spa^2$, 
$$\lim_{n \to \infty} m_n - v_n = \lim_{n \to \infty} \int_{(A_n)^2} |K(x,y)|^2\,\lambda(\!\DD{x})\,\lambda(\!\DD{y}) = +\infty.$$ 
If $r \in (1,+\infty)$, then $m_n$ and $v_n$ grow to infinity at the same speed $s_n=m_n-v_n$, but with different rates $rs_n$ and $(r-1)s_n$. If $r=+\infty$, then $m_n$ and $v_n$ grow to infinity faster than the speed $s_n$, and $m_n/v_n \to 1$.\bigskip

Consider a real parameter $\zeta$ with $|\zeta| \leq c$ for some constant $c<1$ sufficiently small, so that $\sum_{n=3}^\infty \frac{c^{n-3}}{n} \leq \frac{1}{2}$. Then, with $p \in [0,1]$, the power series expansion of $\log(1+t)$ yields
\begin{align*}
&\left|\log(1+p(\E^{\I \zeta}-1)) - \I p\zeta +p(1-p)\frac{\zeta^2}{2}\right| \\
&\leq p \left|\E^{\I \zeta}-1-\I\zeta+\frac{\zeta^2}{2}\right| + \frac{p^2}{2} \left|(\E^{\I \zeta}-1)^2 + \zeta^2\right| + \frac{p^3 |\zeta|^3}{2} \leq A\, p\,|\zeta|^3
\end{align*}
for some positive constant $A$, since $|\E^{\I \zeta}-1| \leq |\zeta|$ for any $\zeta$. Therefore, with $\zeta = \frac{\xi}{(v_n)^{1/3}}$, we obtain on the zone $\xi \in [-c(v_n)^{1/3},c(v_n)^{1/3}|$ the estimate
$$\left|\sum_{j \in \N} \log\!\left(1+p_{n,j}\left(\E^{\frac{\I \xi}{(v_n)^{1/3}}}-1\right)\right) - \I \,\frac{m_n}{(v_n)^{1/3}}\, \xi + (v_n)^{1/3}\,\frac{\xi^2}{2}\right| \leq A\,\frac{m_n}{v_n}\,|\xi|^3.$$
So, if $t_n=(v_n)^{1/3}$ and $X_n = (M(A_n)-m_n)/(v_n)^{1/3}$, then the identity $\esper[\E^{\I \zeta M(A_n)}]=\prod_{j \in \N} (1+p_{n,j}\,(\E^{\I \zeta}-1))$ leads to
$$|\theta_n(\xi)-1|\leq A\,\frac{m_n}{v_n}\,|\xi|^3\,\exp\left(A\,\frac{m_n}{v_n}\,|\xi|^3\right) \leq K_1|\xi^3|\,\exp(K_2|\xi|^3)$$
with $K_1=K_2 = 2A\,\frac{r}{r-1}$, for $n$ large enough. Once this zone of control is established, the probabilistic estimates follow readily from Theorem \ref{thm:locallimit1} and \cite{FMN17}.
\end{proof}

As an application of the previous proposition, consider $G(z) = \sum_{n=0}^\infty G_n\,z^n$ a random analytic series, with the $G_n$'s that are independent standard complex Gaussian variables. The radius of convergence of $G$ is almost surely equal to $1$, and the set of zeroes of $G$ is a determinantal point process on $D(0,1)=\{z \in \C\,|\,|z|<1\}$ with kernel
$$K(z_1,z_2) = \frac{1}{\pi(1-z_1\overline{z_2})^2};$$
see \cite[Theorem 1]{PV05}. As a consequence, the number of zeroes $Z_R$ of the random series $G$ that fall in the disk $D(0,R)=\{z \in \C\,|\,|z|<R\}$ with $R<1$ admits for representation in law
$$Z_R = \sum_{k=1}^\infty \mathrm{Ber}(R^{2k}),$$
where the Bernoulli random variables are taken independent (Theorem 2 in \emph{loc.~cit.}). In \cite[Section 7.1]{FMN16} and \cite[Section 3.1]{FMN17}, we used this representation to prove the mod-Gaussian convergence of $Z_R$ as $R$ goes to $1$. Here, we remark that
\begin{align*}
m_R &= \esper[Z_R] = \sum_{k=1}^\infty R^{2k} = \frac{R^2}{1-R^2};\\
v_R &= \Var(Z_R) = \sum_{k=1}^\infty R^{2k}(1-R^{2k}) = \frac{R^2}{1-R^2} - \frac{R^4}{1-R^4} = \frac{R^2}{1-R^4},
\end{align*}
so as $R$ goes to $1$, we have $m_R/(m_R-v_R) = 1+R^{-2} \to 2 \in (1,+\infty]$. Consequently, if we introduce the hyperbolic area $h=\frac{4\pi R^2}{(1-R^2)}$ of the disc $D(0,R)$, and if we use the conformal invariance of the point process of zeroes \cite[Section 2.3]{HKPV09}, we obtain:
\begin{proposition}
Denote $Z^h$ the number of zeroes of a random analytic series $G=\sum_{n=0}^\infty G_n\,z^n$ that fall in a disc with hyperbolic area $h$. For any $\delta \in (0,\frac{1}{2})$ and any Jordan measurable subset $B$ with $m(B)>0$,
$$\lim_{h \to +\infty} h^{\frac{1}{2}-\delta}\,\,\proba\!\!\left[Z^h-\frac{h}{4\pi} - \frac{xh^{\frac{1}{2}}}{\sqrt{8\pi}} \in \frac{h^{\delta}}{\sqrt{8\pi}}\,B\right] = \frac{\E^{-\frac{x^2}{2}}}{\sqrt{2\pi}}\,m(B).$$
\end{proposition}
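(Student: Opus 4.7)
The plan is to apply Proposition~\ref{prop:determinantal} directly to the determinantal point process of zeroes of $G$ on $D(0,1)$, with kernel $K(z_1,z_2) = \frac{1}{\pi(1-z_1\overline{z_2})^2}$ and exhaustion $A_R = D(0,R)$, $R\uparrow 1$. Every hypothesis has already been verified just above the statement: $K$ is continuous on $D(0,1)^2$ and locally square-integrable; it fails to be globally square-integrable since $K(z,z)$ diverges like $(1-|z|^2)^{-2}$ near the boundary; and the explicit formulas $m_R = R^2/(1-R^2)$ and $v_R = R^2/(1-R^4)$ give $r_R = m_R/(m_R - v_R) = 1 + R^{-2} \to 2 \in (1,+\infty)$. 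Proposition~\ref{prop:determinantal} therefore yields, for every Jordan measurable $B$ with $m(B)>0$ and every $\delta \in (0,\frac{1}{2})$,
$$\lim_{R \to 1}\,(v_R)^{\frac{1}{2}-\delta}\,\proba\!\left[Z_R - m_R - x(v_R)^{\frac{1}{2}} \in (v_R)^{\delta}B\right]=\frac{\E^{-x^2/2}}{\sqrt{2\pi}}\,m(B).$$

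The second step is to reparametrise by the hyperbolic area $h = 4\pi R^2/(1-R^2)$. One checks directly that $m_R = h/(4\pi)$ (exact) and $v_R = h/(8\pi) + O(1)$; consequently $(v_R)^{1/2} = h^{1/2}/\sqrt{8\pi} + O(h^{-1/2})$, $(v_R)^\delta = (h/(8\pi))^\delta(1+O(1/h))$ and $(v_R)^{1/2-\delta} = (h/(8\pi))^{1/2-\delta}(1+O(1/h))$. Since the local limit theorem is invariant under the rescaling $B \mapsto \lambda B$ (which multiplies both sides by $\lambda$, via $m(\lambda B) = \lambda m(B)$ in dimension one), I would choose $\lambda = (8\pi)^{1/2-\delta}$ so that the window $(v_R)^\delta B$ becomes $h^\delta/\sqrt{8\pi}\cdot B$ up to a $1+o(1)$ multiplicative error. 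This choice simultaneously absorbs the surplus factor $(8\pi)^{1/2-\delta}$ appearing in the prefactor, leaving $h^{1/2-\delta}$ on the left and $\frac{1}{\sqrt{2\pi}}\E^{-x^2/2}\,m(B)$ on the right, precisely as claimed.

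The last step is to pass from $Z_R$ to $Z^h$. By the M\"obius invariance of the determinantal process of zeroes of $G$ recalled in \cite[Section 2.3]{HKPV09}, any disc of hyperbolic area $h$ in the Poincar\'e model is the image of $D(0,R)$ with $4\pi R^2/(1-R^2)=h$ under a M\"obius automorphism of $D(0,1)$ preserving the law of the zero process, so $Z^h$ and $Z_R$ are equidistributed. The only technical subtlety is to verify that the $1+o(1)$ relative errors on the window and the additive error $O(h^{-1/2})$ on the centering $x(v_R)^{1/2}$ are asymptotically harmless; this is routine, since the limiting density $\frac{1}{\sqrt{2\pi}}\E^{-x^2/2}$ is continuous and bounded in $x$, and the Lebesgue measure depends linearly on dilations of $B$, so the limit is unaffected. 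I anticipate no serious obstacle beyond this bookkeeping.
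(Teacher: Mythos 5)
Your proposal is correct and follows essentially the same route as the paper: apply Proposition~\ref{prop:determinantal} to the zero process with the Bergman kernel (whose hypotheses were already verified in the text, including $r_R = 1+R^{-2}\to 2$), reparametrise via $m_R = h/(4\pi)$ and $v_R = h/(8\pi)+O(1)$, and invoke the M\"obius invariance of the zero process to reduce a general hyperbolic disc to $D(0,R)$. The bookkeeping with the constant $(8\pi)^{\delta-\frac12}$ and the $o(1)$ perturbations of the centering and window is exactly what the paper leaves implicit behind the phrase ``we obtain,'' and you have filled it in correctly.
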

\noindent This result is optimal, because $\delta=0$ corresponds to the discrete scale, where the Gaussian approximation cannot hold.
\medskip

\subsection{Random zeta functions}\label{subsec:randomzeta}
In this section, we consider a multi-dimensional example stemming from number theory and the study of the Riemann $\zeta$ function. Notice that Theorem \ref{thm:approx} and Lemma \ref{lem:estimatetestfunction} hold also in $\R^d$ with $d \geq 2$. Therefore, we have the following extension to $\R^{d \geq 2}$ of our main Theorem \ref{thm:locallimit1} (we only state this extension for mod-Gaussian sequences):
\begin{proposition}\label{prop:locallimitmultidim}
Let $(\XEC_n)_{n \in \N}$ be a sequence of random vectors in $\R^d$, and $(t_n)_{n \in \N}$ a sequence going to $+\infty$. We denote
$$ \theta_n(\XIEC) = \esper[\E^{\I \scal{\XIEC}{\XEC_n} }]\,\E^{\frac{t_n\,\|\XIEC\|^2}{2}},\quad \text{with }\scal{\XIEC}{\XEC_n} = \sum_{i=1}^d \xi_i X_{n,i}\text{ and }\|\XIEC\|^2 = \sum_{i=1}^d (\xi_i)^2. $$
We assume that there is a zone $[-K(t_n)^{\gamma},K(t_n)^\gamma]^d$ such that, for any $\boldsymbol{\xi}$ in this zone,
$$|\theta_n(\XIEC)-1|\leq K_1\,\|\XIEC\|^v\,\exp(K_2\,\|\XIEC\|^w)$$
where $v>0$, $w \geq 2$ and $-\frac{1}{2}<\gamma\leq \frac{1}{w-2}$. \medskip

\noindent Then, $\YEC_n = \XEC_n/\sqrt{t_n}$ converges in law to a standard Gaussian law $\mathcal{N}_{\R^d}(0,I_d)$, and for any $\delta \in (0,\frac{1}{2}+\gamma)$, any $\mathbf{y} \in \R^d$ and any Jordan measurable subset $B \subset \R^d$ with $m(B)>0$,
$$\lim_{n \to \infty} \,\,(t_n)^{d\delta}\,\proba\!\!\left[\YEC_n -\mathbf{y} \in (t_n)^{-\delta}B\right] = \frac{\E^{-\frac{\|\mathbf{y}\|^2}{2}}}{(2\pi)^{\frac{d}{2}}}\,m(B),$$
where $m(B)$ is the $d$-dimensional Lebesgue measure of $B$.
\end{proposition}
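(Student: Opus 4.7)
The plan is to transfer the one-dimensional argument used for Theorem \ref{thm:locallimit1} to the $d$-dimensional Gaussian setting. The first step is the reduction to smooth test functions. A $d$-dimensional analog of Corollary \ref{cor:approx} holds: any Jordan measurable subset $B\subset \R^d$ can be framed, for arbitrary $\eta>0$, between two functions $g_1,g_2\in\testf_0(\R^d)$ with $g_1\leq \mathbf{1}_B\leq g_2$ and $\int_{\R^d}(g_2-g_1)\,d\mathbf{u}\leq\eta$ (one frames $\mathbf{1}_B$ by step functions, smooths them into $\mathscr{D}(\R^d)$, and finally applies the $d$-dimensional Theorem \ref{thm:approx}). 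A standard sandwich then reduces the claim to
\begin{equation*}
\lim_{n\to\infty}(t_n)^{d\delta}\,\esper\!\left[f\bigl((t_n)^{\delta}(\YEC_n-\mathbf{y})\bigr)\right] \;=\; \frac{\E^{-\|\mathbf{y}\|^2/2}}{(2\pi)^{d/2}}\int_{\R^d}f(\mathbf{u})\,d\mathbf{u}
\end{equation*}
for every $f\in\testf_0(\R^d)$.

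Next, fix such an $f$ with $\widehat{f}$ supported in $[-C,C]^d$, and set $f_n(\mathbf{z})=f((t_n)^{\delta}(\mathbf{z}-\mathbf{y}))$, so that $\widehat{f_n}$ is supported in $[-C(t_n)^{\delta},C(t_n)^{\delta}]^d$. Since $\delta<\frac{1}{2}+\gamma$, for $n$ large enough this support, once rescaled by $1/\sqrt{t_n}$ (corresponding to passing from $\XEC_n$ to $\YEC_n$), lies inside the zone of control $[-K(t_n)^{\gamma},K(t_n)^{\gamma}]^d$. The $d$-dimensional version of Lemma \ref{lem:estimatetestfunction}, which extracts $\esper[\E^{\I\scal{\XIEC}{\YEC_n}}]$ by Plancherel, writes the difference of expectations as a Fourier integral of $\widehat{f_n}(\XIEC)\,(\theta_n(\XIEC/\sqrt{t_n})-1)\,\E^{-\|\XIEC\|^2/2}$, and uses the bound $|\theta_n(\XIEC)-1|\leq K_1\|\XIEC\|^v\exp(K_2\|\XIEC\|^w)$ together with the condition $\gamma\leq \frac{1}{w-2}$ to dominate the exponential error by the Gaussian decay, yielding
\begin{equation*}
\esper[f_n(\YEC_n)] \;=\; \int_{\R^d} f_n(\mathbf{z})\,\frac{\E^{-\|\mathbf{z}\|^2/2}}{(2\pi)^{d/2}}\,d\mathbf{z} \;+\; O\!\left(\frac{\|f_n\|_{L^1}}{(t_n)^{v/2}}\right).
\end{equation*}
Since $\|f_n\|_{L^1}=(t_n)^{-d\delta}\|f\|_{L^1}$, after multiplication by $(t_n)^{d\delta}$ the error term is $O((t_n)^{-v/2})=o(1)$.

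It remains to handle the leading Gaussian integral. The change of variable $\mathbf{u}=(t_n)^{\delta}(\mathbf{z}-\mathbf{y})$ gives
\begin{equation*}
(t_n)^{d\delta}\int_{\R^d}f_n(\mathbf{z})\,\frac{\E^{-\|\mathbf{z}\|^2/2}}{(2\pi)^{d/2}}\,d\mathbf{z} \;=\; \int_{\R^d}f(\mathbf{u})\,\frac{\E^{-\|\mathbf{y}+(t_n)^{-\delta}\mathbf{u}\|^2/2}}{(2\pi)^{d/2}}\,d\mathbf{u},
\end{equation*}
and the integrand is dominated by $(2\pi)^{-d/2}|f(\mathbf{u})|$, which is in $L^1$, so dominated convergence yields the limit $\frac{\E^{-\|\mathbf{y}\|^2/2}}{(2\pi)^{d/2}}\int_{\R^d}f(\mathbf{u})\,d\mathbf{u}$. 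Combined with the sandwich step, this closes the proof. The principal technical point is the multidimensional analog of Lemma \ref{lem:estimatetestfunction}: one must verify that the exponential factor $\exp(K_2\|\XIEC\|^w)$ coming from the zone of control, when evaluated at $\XIEC/\sqrt{t_n}$ on the support of $\widehat{f_n}$, does not swamp the Gaussian damping $\exp(-\|\XIEC\|^2/2)$, which is precisely what the compatibility conditions $w\geq 2$ and $\gamma\leq \frac{1}{w-2}$ guarantee, exactly as in the scalar case treated in \cite{FMN17}.
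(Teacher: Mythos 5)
Your proof is correct and follows exactly the route the paper intends: the paper's own justification for Proposition \ref{prop:locallimitmultidim} is the one-line remark that Theorem \ref{thm:approx} and Lemma \ref{lem:estimatetestfunction} hold verbatim in $\R^d$, and you have simply spelled out the $d$-dimensional sandwich, the support-rescaling calculation (\,$C(t_n)^{\delta-1/2}<K(t_n)^\gamma$ from $\delta<\frac{1}{2}+\gamma$\,), the error $(t_n)^{d\delta}\cdot O(\|f_n\|_{L^1}(t_n)^{-v/2})=O((t_n)^{-v/2})$ from $\|f_n\|_{L^1}=(t_n)^{-d\delta}\|f\|_{L^1}$, and the final dominated convergence, all in direct parallel with the proof of Theorem \ref{thm:locallimit1}.
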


We refer to \cite[Theorem 4]{KN12} for a similar statement, with slightly different assumptions. The reader should beware that in the theory of mod-$\phi$ convergent sequences, this local limit theorem is the only multi-dimensional extension of the results in dimension $1$ that is straightforward. Thus, for the speed of convergence estimates and the large deviation results, new phenomenons occur in dimension $d \geq 2$, and the extension of the one-dimensional results is much more involved \cite{FMN17b}. Now, let us apply Proposition \ref{prop:locallimitmultidim} to the sequence of complex random variables
\begin{equation}
    X_n = - \sum_{p \leq n} \log\left(1-\frac{U_p}{\sqrt{p}}\right), \label{eq:randomzeta}
\end{equation}
where the sum runs over prime numbers $p$ smaller than $n$, and the random variables $U_p$ are independent and uniformly distributed on the unit circle. The random variables $X_n$ are simple models of the logarithm of the random zeta function on the critical line, 
see \cite[Section 4.1]{JKN11} and \cite[Example 2]{KN12}. The $2$-dimensional Fourier transform of $X_n$ was computed in \cite{KN12}:
$$\esper[\E^{\I (\xi_1\mathrm{Re}(X_n) + \xi_2 \mathrm{Im}(X_n))}] = \prod_{p \leq n} \hypergeom{\frac{\I \xi_1+\xi_2}{2}}{\frac{\I \xi_1-\xi_2}{2}}{1}\left(\frac{1}{p}\right),$$
where $\hypergeom{a}{b}{c}(z)$ is the hypergeometric function defined by
$\hypergeom{a}{b}{c}(z) = \sum_{m=0}^\infty \frac{a^{\uparrow m}\,b^{\uparrow m}}{c^{\uparrow m}\,m!}\,z^m,$
with $k^{\uparrow m}=k(k+1)\cdots (k+m-1)$. \bigskip

Therefore, if $\theta_n(\XIEC) = \esper[\E^{\I \scal{\XIEC}{X_n}}]\,\E^{\frac{t_n\|\XIEC\|^2}{2}}$ with $t_n = \frac{1}{2}\sum_{p\leq n}\frac{1}{p}$, then
\begin{align*}
\theta_n(\XIEC) = \prod_{p \leq N} \left(1 - \frac{\|\XIEC\|^2}{4p}  +  \sum_{m \geq 2} \frac{(\frac{\I \xi_1+\xi_2}{2})^{\uparrow m}\,(\frac{\I \xi_1-\xi_2}{2})^{\uparrow m}}{(m!)^2}\,p^{-m}\right) \E^{\frac{\|\XIEC\|^2}{4p}}.
\end{align*}
Denote $T_p(\XIEC)$ the terms of the product on the right-hand side. We have
\begin{align*}
|R_p(\XIEC)|&=\left|\sum_{m \geq 2}\frac{(\frac{\I \xi_1+\xi_2}{2})^{\uparrow m}\,(\frac{\I \xi_1-\xi_2}{2})^{\uparrow m}}{(m!)^2}\,p^{-m} \right|\\
 &\leq \sum_{m \geq 2}\left(\frac{(\frac{\|\XIEC\|}{2})^{\uparrow m}}{m!}\right)^{\!2}\,p^{-m} = \frac{1}{2\pi} \int_{0}^{2\pi} \left|\sum_{m\geq 2}\frac{(\frac{\|\XIEC\|}{2})^{\uparrow m}}{m!} \E^{\I m\theta}\,p^{-\frac{m}{2}}\right|^2\DD{\theta}\\
&\leq \frac{1}{2\pi} \int_{0}^{2\pi} \left|\frac{1}{(1-\E^{\I\theta}p^{-1/2})^{\frac{\|\XIEC\|}{2}}} - 1 - \frac{\|\XIEC\|}{2}\,\E^{\I\theta}p^{-1/2}\right|^2\DD{\theta} \\
&\leq \left(\frac{(\frac{\|\XIEC\|}{2})(\frac{\|\XIEC\|}{2}+1)}{2}\,\frac{1}{(1-p^{-1/2})^{\frac{\|\XIEC\|}{2} +2}\,p}\right)^2.
\end{align*}
Suppose $\|\XIEC\|\leq \sqrt{p}$. Then, $\E^{\frac{\|\XIEC\|^2}{4p}} \leq \E^{\frac{1}{4}}$ and $(1-(1-\frac{\|\XIEC\|^2}{4p})\E^{\frac{\|\XIEC\|^2}{4p}})\leq (1-\frac{3}{4}\E^{\frac{1}{4}}) \frac{\|\XIEC\|^4}{p^2}$, so
\begin{align*}
|R_p(\XIEC)| &\leq \frac{1}{64(1-p^{-1/2})^{4+p^{1/2}}}\,\frac{\|\XIEC\|^2(\|\XIEC\|+2)^2}{p^2}\leq \frac{12.06\,\|\XIEC\|^2(\|\XIEC\|+2)^2}{p^2};\\
|T_p(\XIEC)-1|&\leq \left(1-\left(1-\frac{\|\XIEC\|^2}{4p}\right)\E^{\frac{\|\XIEC\|^2}{4p}}\right)+ \E^{1/4}\,|R_p(\XIEC)| \leq \frac{16\,\|\XIEC\|^2(\|\XIEC\|+2)^2}{p^2} \\
&\leq \frac{16\,\|\XIEC\|^2(\|\XIEC\|+2)^2}{p^2}\,\E^{\frac{16\,\|\XIEC\|^2(\|\XIEC\|+2)^2}{p^2}};\\
|T_p(\XIEC)| &\leq 1+\frac{16\,\|\XIEC\|^2(\|\XIEC\|+2)^2}{p^2} \leq \E^{\frac{16\,\|\XIEC\|^2(\|\XIEC\|+2)^2}{p^2}}.
\end{align*}
On the first line, we used the fact that $p \in \mathbb{P} \mapsto (1-p^{-1/2})^{4+p^{1/2}}$ attains its minimum at $p=2$. On the other hand, if $\|\XIEC\| \geq \sqrt{p}$, then 
\begin{align*}
|T_p(\XIEC)| &= \left|\esper\!\left[\E^{-\I \scal{\XIEC}{\log\!\left(1-\frac{U_p}{\sqrt{p}}\right)}}\right]\right|\,\E^{\frac{\|\XIEC\|^2}{4p}} \leq \E^{\frac{\|\XIEC\|^2}{4p}} \leq \E^{\frac{16\,\|\XIEC\|^2(\|\XIEC\|+2)^2}{p^2}};\\
|T_p(\XIEC)-1| &\leq 1 + \E^{\frac{\|\XIEC\|^2}{4p}} \leq 2\,\E^{\frac{\|\XIEC\|^2}{4p}}\leq \frac{16\,\|\XIEC\|^2(\|\XIEC\|+2)^2}{p^2}\,\E^{\frac{16\,\|\XIEC\|^2(\|\XIEC\|+2)^2}{p^2}}.
\end{align*}
From these inequalities, one deduces that \emph{for any} $\XIEC \in \R^2$,
\begin{align*}
|\theta_n(\XIEC)-1| &= \left|\left(\prod_{p \leq n} T_p(\XIEC)\right)-1\right| \leq \sum_{p\leq n} \left(\prod_{\substack{p'\leq n\\ p'\neq p}} |T_{p'}(\XIEC)|\right)\,|T_p(\XIEC)-1| \leq S \exp S
\end{align*}
where $S = \sum_{p\leq n} \frac{16\,\|\XIEC\|^2(\|\XIEC\|+2)^2}{p^2} \leq 8\,\|\XIEC\|^2(\|\XIEC\|+2)^2$. It follows immediately that one has a control of index $(2,4)$ over $\theta_n(\XIEC)-1$, which holds over the whole real line. Hence:
\begin{proposition}
Let $X_n$ be the random log-zeta function defined by Equation \eqref{eq:randomzeta}. For any exponent $\delta \in (0,1)$ and any $z \in \C$,
$$\lim_{n \to \infty} \left(\log \log n\right)^{2\delta}\,\proba\!\left[X_n - z\,\sqrt{\log \log n} \in \left(\log \log n\right)^{\frac{1}{2}-\delta}\,B \right] = \frac{\E^{-|z|^2}}{\pi}\,m(B)$$
for any Jordan measurable bounded set $B \subset \C$ with $m(B)>0$.
\end{proposition}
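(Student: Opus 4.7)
The plan is to apply the two-dimensional version of Theorem~\ref{thm:locallimit1}, namely Proposition~\ref{prop:locallimitmultidim}, to the $\R^2$-valued random vector $\XEC_n = (\mathrm{Re}(X_n),\mathrm{Im}(X_n))$, and then to convert the resulting normalisation by $\sqrt{t_n}$ into the normalisation by $\sqrt{\log\log n}$ that appears in the statement.

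The estimate $|\theta_n(\XIEC)-1|\leq S\,\E^{S}$ with $S = 8\,\|\XIEC\|^2(\|\XIEC\|+2)^2$ derived above is valid on all of $\R^2$, and for suitably large constants $K_1,K_2$ it can be rewritten as $|\theta_n(\XIEC)-1|\leq K_1\,\|\XIEC\|^2\,\exp(K_2\,\|\XIEC\|^4)$. This is a control of index $(v,w)=(2,4)$ on the whole plane, so Proposition~\ref{prop:locallimitmultidim} applies with the maximal exponent $\gamma = \frac{1}{w-2}=\frac{1}{2}$, allowing any $\delta\in(0,1)$: for every $\mathbf{y}\in\R^2$ and every Jordan measurable $B'\subset\R^2$ with $m(B')>0$,
$$\lim_{n\to\infty}\,(t_n)^{2\delta}\,\proba\!\left[\XEC_n-\mathbf{y}\sqrt{t_n}\in (t_n)^{1/2-\delta}\,B'\right]=\frac{\E^{-\|\mathbf{y}\|^2/2}}{2\pi}\,m(B').$$

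To convert this into the form claimed, identify $z = z_1+\I z_2\in\C$ with $\mathbf{y}_z = (z_1,z_2)\in\R^2$ and set $\mathbf{y}=\sqrt{2}\,\mathbf{y}_z$ and $B'=2^{1/2-\delta}B$, so that $\|\mathbf{y}\|^2/2 = |z|^2$ and $m(B') = 2^{1-2\delta}\,m(B)$. By Mertens' theorem $2t_n = \sum_{p\leq n}\frac{1}{p} = \log\log n + M + o(1)$, hence $\mathbf{y}\sqrt{t_n} = \mathbf{y}_z\sqrt{2t_n} = \mathbf{y}_z\sqrt{\log\log n}\,(1+o(1))$ and $(t_n)^{1/2-\delta}B' = (\log\log n)^{1/2-\delta}B\,(1+o(1))$. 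Multiplying the intermediate limit through by $(\log\log n)^{2\delta}/(t_n)^{2\delta}\to 2^{2\delta}$ turns its left-hand side into $(\log\log n)^{2\delta}\,\proba[X_n - z\sqrt{\log\log n}\in (\log\log n)^{1/2-\delta}B]$, while the right-hand side becomes $2^{2\delta}\cdot\frac{\E^{-|z|^2}}{2\pi}\cdot 2^{1-2\delta}\,m(B) = \frac{\E^{-|z|^2}}{\pi}\,m(B)$, as required.

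The only technical obstacle is handling the $o(1)$ adjustment of the centering and of the window scale, which one deals with by sandwiching $B$ between two Jordan measurable sets $B^-\subset B\subset B^+$ whose boundaries are moved by more than the vanishing discrepancies $|z|\,O(1/\sqrt{\log\log n})$ (from the centering) and $(1+o(1))$ (from the window), and whose Lebesgue measures both converge to $m(B)$. This is feasible precisely because the centering discrepancy of order $1/\sqrt{\log\log n}$ is smaller than the window size $(\log\log n)^{1/2-\delta}$ exactly when $\delta<1$, which is the hypothesis of the proposition; applying the intermediate local limit theorem to $B^\pm$ and then letting the sandwich tighten produces the desired limit.
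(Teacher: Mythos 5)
Your proof is correct and follows the approach the paper intends: it applies Proposition~\ref{prop:locallimitmultidim} with the uniform control of index $(2,4)$ established in Section~\ref{subsec:randomzeta}, which gives $\gamma = \tfrac{1}{w-2} = \tfrac{1}{2}$ and hence $\delta\in(0,1)$, and then carefully converts the $\sqrt{t_n}$-normalisation into the $\sqrt{\log\log n}$-normalisation via Mertens' theorem, with the factor $2$ coming from $2t_n\sim\log\log n$ and the change of measure from $\mathcal{N}_{\R^2}(0,I_2)$ to the complex standard Gaussian. The paper itself presents this last step as immediate, so your write-up usefully fills in the bookkeeping (the rescaling of $\mathbf{y}$ and $B'$ by powers of $2$, and the sandwiching that absorbs the $o(1)$ discrepancies, the latter being precisely where the restriction $\delta<1$ is used).
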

\noindent This result improves on \cite[Section 3, Example 2]{KN12} and \cite[Section 3.5]{DKN15}, which dealt only with the case $\delta\leq \frac{1}{2}$.
\medskip

\subsection{Sums with sparse dependency graphs}\label{subsec:dependencygraph}
In the previous paragraphs, we looked at sums of independent random variables, but the theory of zones of control is also useful when dealing certain sums of weakly dependent random variables. A general setting where one can prove mod-Gaussian convergence with a zone of control is if one has strong bounds on the \emph{cumulants} of the random variables considered. Recall that if $X$ is a random variable with convergent Laplace transform $\esper[\E^{zX_n}]$, then its cumulants $\kappa^{(r \geq 1)}(X)$ are the coefficients of the log-Laplace transform:
$$\log \esper[\E^{zX_n}] = \sum_{r=1}^\infty \frac{\kappa^{(r)}(X)}{r!}\,z^r.$$
If $(S_n)_{n \in \N}$ is a sequence of random variables, one says that one has uniform bounds on the cumulants with parameters $(D_n,N_n,A)$ if
\begin{equation}
    \forall r \geq 1,\,\,\,|\kappa^{(r)}(S_n)| \leq r^{r-2}\,A^r\,(2D_n)^{r-1}\,N_n.\label{eq:boundoncumulants}
\end{equation}
This definition was introduced in \cite[Definition 4.1]{FMN17}. If $D_n = o(N_n)$ and $N_n \to +\infty$, then the uniform bounds on cumulants imply that $X_n = \frac{S_n-\esper[S_n]}{(N_n)^{1/3}(D_n)^{2/3}}$ admits a zone of control of mod-Gaussian convergence, with parameters
$$ t_n = \frac{\Var(S_n)}{(N_n)^{2/3}(D_n)^{4/3}},$$
index $(3,3)$ and size $O(t_n)$ \cite[Corollary 4.2]{FMN17}. Therefore:
\begin{proposition}
Let $(S_n)_{n \in \N}$ be a sequence of random variables that admit uniform bounds on cumulants (Equation \eqref{eq:boundoncumulants}). We suppose that $\frac{(\mathrm{var}(S_n))^{3/2}}{N_n(D_n)^{2}}$ goes to $+\infty$, and that $D_n = o(N_n)$. Then, if $Y_n = \frac{S_n-\esper[S_n]}{\sqrt{\Var(S_n)}}$, for any $\delta \in (0,1)$ and any Jordan measurable subset $B$ with $m(B)>0$,
$$\lim_{n \to \infty} \left(\frac{(\Var(S_n))^{3/2}}{N_n(D_n)^2}\right)^{\!\delta} \,\,\proba\!\left[Y_n - y \in \left(\frac{(\Var(S_n))^{3/2}}{N_n(D_n)^2}\right)^{\!-\delta}B\right] = \frac{\E^{-\frac{y^2}{2}}}{\sqrt{2\pi}}\, m(B).$$
In particular, if $\liminf_{n \to \infty} \frac{\Var(S_n)}{N_nD_n}>0$, then for any $\gamma \in (0,\frac{1}{2})$,
$$\lim_{n \to \infty} \left(\frac{N_n}{D_n}\right)^{\!\gamma} \,\,\proba\!\left[Y_n - y \in \left(\frac{D_n}{N_n}\right)^{\!\gamma}B\right] = \frac{\E^{-\frac{y^2}{2}}}{\sqrt{2\pi}}\, m(B).$$
\end{proposition}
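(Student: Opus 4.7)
The plan is to apply Theorem \ref{thm:locallimit1} directly, using the zone of control recalled from \cite[Corollary 4.2]{FMN17} in the discussion just preceding the statement. Under the cumulant bounds \eqref{eq:boundoncumulants}, the renormalisation $X_n = (S_n - \esper[S_n])/((N_n)^{1/3}(D_n)^{2/3})$ is mod-Gaussian convergent with parameters $t_n = \Var(S_n)/((N_n)^{2/3}(D_n)^{4/3})$, index of control $(\nu,\omega) = (3,3)$, and size of zone $O(t_n)$, i.e.\ $\gamma = 1$ in Definition \ref{def:zone}. The hypothesis $(\Var S_n)^{3/2}/(N_n (D_n)^2) \to +\infty$ is exactly $t_n \to +\infty$. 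Moreover, since $\alpha = 2$, the renormalisation $Y_n = X_n/\sqrt{t_n}$ from Proposition \ref{prop:convlaw} simplifies algebraically to $(S_n - \esper[S_n])/\sqrt{\Var(S_n)}$, matching the $Y_n$ of the statement.

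Next, I apply Theorem \ref{thm:locallimit1} with $\alpha = 2$, $c = 1/\sqrt{2}$, $\beta = 0$, which yields for every exponent $\delta' \in (0, \gamma + 1/\alpha) = (0, 3/2)$ and every Jordan measurable $B$ with $m(B) > 0$,
$$\lim_{n \to \infty}(t_n)^{\delta'}\,\proba\!\left[Y_n - y \in (t_n)^{-\delta'} B\right] = \frac{\E^{-y^2/2}}{\sqrt{2\pi}}\,m(B).$$
The first displayed formula follows by writing $\delta' = 3\delta/2$ with $\delta \in (0,1)$ and using the identity $(t_n)^{3/2} = (\Var S_n)^{3/2}/(N_n (D_n)^2)$.

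For the ``in particular'' part, I note that the cumulant bound evaluated at $r = 2$ reads $\Var(S_n) = |\kappa^{(2)}(S_n)| \le 2 A^2 D_n N_n$; combined with $\liminf \Var(S_n)/(N_n D_n) > 0$, this yields $\Var(S_n) = \kappa_n N_n D_n$ with $\kappa_n$ pinched between two positive constants, and hence $t_n \asymp (N_n/D_n)^{1/3}$. Taking $s_n = (N_n/D_n)^\gamma$ for any $\gamma \in (0,1/2)$, we have $s_n \asymp (t_n)^{3\gamma}$ with $3\gamma < 3/2 = \gamma_{\mathrm{zone}} + 1/\alpha$.

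The only non-bookkeeping step is to run Theorem \ref{thm:locallimit1} at the scale $s_n$ rather than at a literal power $(t_n)^{\delta'}$. The proof of that theorem, based on Lemma \ref{lem:estimatetestfunction} and Corollary \ref{cor:approx}, goes through verbatim for any scale $(s_n)_{n \in \N}$ with $s_n \to +\infty$ and $C s_n \le K (t_n)^{\gamma + 1/\alpha}$ for $n$ large enough, where $[-C,C]$ contains the Fourier support of the sandwiching test functions; the main-term limit $\int f(u)\,p_{c,\alpha,\beta}(x+u/s_n) \DD{u} \to p_{c,\alpha,\beta}(x) \int f(u) \DD{u}$ still holds by dominated convergence, and the error term becomes $O(\|f\|_{\leb^1}/(t_n)^{\nu/\alpha}) = o(1)$ independently of $s_n$. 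Applying this directly to $s_n = (N_n/D_n)^\gamma$ gives the ``in particular'' part. A sandwich alternative, absorbing the bounded factor $\kappa_n^{3\gamma}$ by dilating $B$ along accumulation subsequences of $(\kappa_n)$, would also work, but the direct route is cleaner.
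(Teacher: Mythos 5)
Your proof is correct and takes the same route the paper intends (the paper states the proposition without a written proof, presenting it as a direct consequence of Theorem \ref{thm:locallimit1} and the zone of control from \cite[Corollary 4.2]{FMN17}). The first display is indeed a literal reparametrisation $\delta' = 3\delta/2 \in (0,3/2)$ of Theorem \ref{thm:locallimit1}, and your algebra checking that $Y_n = X_n/\sqrt{t_n}$ and $t_n^{3/2} = (\Var S_n)^{3/2}/(N_n D_n^2)$ is right. You correctly flag the one point the paper glosses over: for the ``in particular'' part the scale $s_n = (N_n/D_n)^\gamma$ is only $\asymp (t_n)^{3\gamma}$ (through the pinched factor $\kappa_n = \Var(S_n)/(N_n D_n) \in [c_0, 2A^2]$), not a literal power of $t_n$, so Theorem \ref{thm:locallimit1} as stated does not directly apply; your observation that its proof runs verbatim for any $s_n \to \infty$ with $C s_n \le K(t_n)^{\gamma_{\mathrm{zone}}+1/\alpha}$ eventually — with the error term $O(\|f\|_{\leb^1}/(t_n)^{\nu/\alpha})$ going to zero independently of $s_n$ and the main term handled by dominated convergence — is exactly the needed extension, and is cleaner than the sandwich-by-dilation alternative you mention.
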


\begin{example}
As a particular case, suppose that $S_n = \sum_{i=1}^{N_n} A_i$ is a sum of random variables with $\|A_i\|_\infty \leq A$ for all $i \leq N_n$, and such that there exists a \emph{dependency graph} $G = (\lle 1,N_n\rre,E_n)$ with the following property:
\begin{enumerate}
    \item We have $D_n = 1+\max_{i \in \lle 1,N_n\rre} \deg(i)$.
    \item If $I$ and $J$ are two disjoint sets of vertices of $\lle 1,N_n\rre$ without edge $e \in E_n$ connecting a vertex $i \in I$ with a vertex $j \in J$, then $(A_i)_{i \in I}$ and $(A_j)_{j \in J}$ are independent.
\end{enumerate}
Then, it was shown in \cite[Section 9]{FMN16} that $S_n$ has uniform bounds on cumulants with parameters $(D_n,N_n,A)$. Therefore, if $S_n$ is a sum of random variables with a sparse dependency graph, then $Y_n = (S_n - \esper[S_n])/\sqrt{\Var(S_n)}$ usually satisfies a local limit theorem which holds up to the scale $\sqrt{D_n/N_n}$. \bigskip

For instance, consider the graph subcount $I(H,G_n)$ of a motive $H$ in a random Erd\"os--R\'enyi graph $G_n$ of parameters $(n,p)$, with $p \in (0,1)$ fixed (see \cite[Example 4.10]{FMN17} for the precise definitions). It is shown in \emph{loc.~cit.} that $I(H,G_n)$ admits a dependency graph with parameters \begin{align*}
N_n &= n^{\downarrow k};\\
D_n &= 2\binom{k}{2}\,(n-2)^{\downarrow k-2},
\end{align*}
where $k$ is the number of vertices of the graph $H$, and $n^{\downarrow k}=n(n-1)\cdots (n-k+1)$. Moreover, \begin{align*}
\esper[I(H,G_n)]&=p^h n^{\downarrow k};\\
\Var(I(H,G_n)) &= 2h^2 p^{2h-1}(1-p)\,n^{2k-2} + O(n^{2k-3}),
\end{align*}
where $h$ is the number of edges of $H$. Therefore, we have the local limit theorem:
$$\proba\!\left[\frac{I(H,G_n)}{p^h}-n^{\downarrow k} - 2h n^{k-1}x \in n^{k-1-\gamma}\,B\right] \simeq n^{-\gamma}\,\frac{\E^{-\frac{px^2}{1-p}}\,m(B)}{2h\sqrt{\pi\,(\frac{1}{p}-1)}}$$
for any $\gamma \in (0,1)$. For example, if $T_n$ is the number of triangles in a random Erdös--Rényi graph $G(n,p)$, then for any $\gamma \in (0,1)$,
$$\lim_{n \to \infty} n^{\gamma}\,\,\proba\!\left[\frac{T_n}{p^3} - n^{\downarrow 3} -6n^2 x\in n^{2-\gamma}\,B\right]  = \frac{\E^{-\frac{px^2}{1-p}}\,m(B)}{6\sqrt{\pi\,(\frac{1}{p}-1)}}.$$
We cannot attain with our method the discrete scale (which would correspond to the exponent $\gamma=2$ in the case of triangles). In the specific case of triangles, this strong local limit theorem has been proved recently by Gilmer and  Kopparty, see \cite{GK16}. Our local limit theorem holds at larger scales and for any graph subcount.
\end{example}
\medskip

\subsection{Numbers of visits of a finite Markov chain}\label{subsec:markov}
The method of cumulants can also be applied to sums of random variables that are all dependent (there is no sparse dependency graph), but still with a ``weak'' dependency structure. We refer to \cite[Section 5]{FMN17}, where this is made rigorous by means of the notion of weighted dependency graph. Consider for instance an ergodic Markov chain $(X_n)_{n \in \N}$ on a finite state space $\mathfrak{X}=\lle 1,M\rre$, where by ergodic we mean that the transition matrix $P$ of $(X_n)_{n \in \N}$ is irreducible and aperiodic. We also fix a state $a \in \lle 1,M\rre$, and we denote $\pi(a)$ the value of the unique stationary measure $\pi$ of the Markov chain at $a$. If $T_a$ is the first return time to $a$, then it is well known that $\pi(a) = \frac{1}{\esper_a[T_a]}$. In the sequel, we assume to simplify that the Markov chain has for initial distribution the stationary measure $\pi$, and we denote $\proba$ and $\esper$ the corresponding probabilities and expectations on trajectories in $\mathfrak{X}^{\N}$. If
$$N_{n,a} = \card \{ i \in \lle 1,n\rre\,|\,X_i=a\}$$
is the number of visits of $a$ from time $1$ to time $n$, then $\esper[N_{n,a}] =n\,\pi(a)$ and 
$$\lim_{n \to \infty}\frac{\Var(N_{n,a})}{n} = (\pi(a))^3\,\Var(T_a).$$
This identity is a particular case of the following general result: if $f$ is a function on $\mathfrak{X}$, then
$$\lim_{n \to \infty} \frac{\Var(\sum_{i=1}^n f(X_i))}{n} = \pi(a)\,\esper_a\!\left[\left(\sum_{i=1}^{T_a} f(X_i)-\pi(f)\right)^2\right].$$
In \cite[Theorem 5.14]{FMN17}, we proved that $N_{n,a}$ has uniform bounds on cumulants with parameters $A=1$, $N_n=n$, and $D_n = \frac{1+\theta_P}{1-\theta_P}$, where $\theta_P <1$ is a constant depending only on $P$ (it is the square root of the second largest eigenvalue of the multiplicative reversiblization $P\widetilde{P}$ of $P$, see \cite[\S2.1]{Fill91} for details on this construction). From this, we deduce:
\begin{proposition}
Let $(X_n)_{n \in \N}$ be a stationary finite ergodic Markov chain, and $a$ be an element of the space of states. The numbers of visits $N_{n,a}$ satisfy the local limit theorem
$$\lim_{n \to \infty}\,n^{\gamma}\,\,\proba\!\!\left[\frac{N_{n,a} - n\pi(a)}{\sqrt{\Var(N_{n,a})}} - x \in n^{-\gamma}B\right] = \frac{\E^{-\frac{x^2}{2}}\,m(B)}{\sqrt{2\pi}}$$
for any $\gamma \in (0,\frac{1}{2})$ and any Jordan measurable subset $B$ with $m(B)>0$.
\end{proposition}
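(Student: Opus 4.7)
\medskip

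\noindent\textbf{Proof plan.} The plan is to identify this statement as a direct application of the proposition in Section \ref{subsec:dependencygraph} to the sequence $S_n = N_{n,a}$, since the main input it requires, namely uniform bounds on cumulants, has just been recalled. More precisely, Theorem 5.14 of \cite{FMN17} provides such bounds with parameters $A=1$, $N_n = n$, and $D_n = \frac{1+\theta_P}{1-\theta_P}$. The crucial feature is that $D_n$ does not depend on $n$; in particular, $D_n$ is trivially $o(N_n)$.

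The only non-trivial hypothesis of the proposition in its second (cleaner) form is $\liminf_{n\to\infty} \Var(N_{n,a}) / (N_n\,D_n) > 0$. For this I would invoke the variance asymptotic $\Var(N_{n,a})/n \to (\pi(a))^3\,\Var(T_a)$ already stated in the text, and argue that $\Var(T_a) > 0$ whenever $|\mathfrak{X}| \geq 2$: indeed, if $T_a$ were almost surely equal to some constant $k$, then $P^j(a,a) = 0$ for every $j$ not a multiple of $k$, forcing the period of $a$ to be $k$. Aperiodicity then gives $k=1$, i.e.\ $P(a,a)=1$, which contradicts irreducibility on a state space of size at least $2$. (The case $|\mathfrak{X}|=1$ is degenerate and $N_{n,a}=n$ is deterministic.) Hence $\Var(T_a) > 0$ and the $\liminf$ is a strictly positive constant.

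Applying the proposition of \S\ref{subsec:dependencygraph} directly yields, for every $\gamma \in (0,\frac{1}{2})$ and every Jordan measurable $B$ with $m(B)>0$,
$$ \lim_{n \to \infty} \left(\frac{n}{D_n}\right)^{\!\gamma}\,\proba\!\left[\frac{N_{n,a}-n\pi(a)}{\sqrt{\Var(N_{n,a})}} - x \in \left(\frac{D_n}{n}\right)^{\!\gamma}\!B\right] = \frac{\E^{-x^2/2}}{\sqrt{2\pi}}\,m(B). $$
Since $D_n$ is a constant, I would absorb it by substituting $B' = (D_n)^{\gamma}\,B$, which is still Jordan measurable with $m(B') = (D_n)^{\gamma}\,m(B)$, and using the fact that the limiting identity is required to hold for every such set. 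The constant factors $(D_n)^{\gamma}$ cancel on both sides, leaving precisely the formula with scale $n^{-\gamma}$ claimed in the proposition.

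I do not foresee any serious obstacle: the zone of control and the local limit theorem (Theorem \ref{thm:locallimit1}) that do the heavy lifting have already been established, and the reduction from the Markov-chain setting to the ``sums with uniform cumulant bounds'' setting is provided by Theorem 5.14 of \cite{FMN17}. The only step demanding a moment's thought is the non-degeneracy of $T_a$, handled as above by a brief period/irreducibility argument.
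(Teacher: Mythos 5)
Your proof is correct and follows the same route the paper intends: invoke the uniform cumulant bounds from \cite[Theorem 5.14]{FMN17} with $A=1$, $N_n=n$, $D_n = \frac{1+\theta_P}{1-\theta_P}$ constant, and feed them into the proposition of Section~\ref{subsec:dependencygraph}. The paper itself only writes ``from this, we deduce'' and leaves the verification implicit; you have usefully filled in the two small gaps — the non-degeneracy $\Var(T_a)>0$ via the aperiodicity/irreducibility argument, and the rescaling $B'=(D_n)^\gamma B$ that absorbs the constant $D_n$ — so your write-up is, if anything, slightly more complete than the original.
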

For finite ergodic Markov chains, the discrete local limit theorem $(\gamma = \frac{1}{2})$ is known and due to Kolmogorov, see 
\cite{Kol49}. However, since there is no uniformity in the local estimates of $\proba[N_{n,a}=k]$, our result is not a direct consequence (and does not imply) the local limit theorem of Kolmogorov.
\bigskip

\section{Examples from random matrix theory}\label{sec:matrix}
In this section, we examine examples stemming from or closely related to random matrix theory, and which exhibit mod-Gaussian behavior.

\subsection{Number of charge one particles in a two charge system}\label{subsec:charge}
Let $L,M$ be non-negative random integers and $n\in\N$ be a fixed natural number, such that $L+2M=2n$.
We consider the two charge ensembles proposed in \cite{RSX13} and \cite{SS14}, where the particles are located on the real line, respectively on the unit circle. These models can be considered as interpolations between the classical ensembles GOE and GSE, respectively COE and CSE from random matrix theory.
\medskip

\paragraph{\emph{The real line.}}
The system consists of $L$ particles with unit charge and $M$ particles with charge two, located on the real line at positions $\xi=(\xi_1,\ldots,\xi_L)$ and $\zeta=(\zeta_1,\ldots,\zeta_M)$. We denote by
$E_{L,M}$ the total potential energy of the state $(\xi,\zeta)$. For this model $E_{L,M}$ is the sum of the total interaction energy between particles and of an external harmonic oscillator potential:
\begin{align*}
E_{L,M} &= -\sum_{1\leq i<j\leq L }\log |\xi_i-\xi_j| - 2 \sum_{i=1}^L\sum_{j=1}^M \log |\xi_i-\zeta_j|- 4\sum_{1\leq i<j\leq M }\log |\zeta_i-\zeta_j|\\
& \qquad+\sum_{i=1}^L \frac{(\xi_i)^2}{2} + 2\sum_{j=1}^M \frac{(\zeta_j)^2}{2}.
\end{align*}
The ensemble has population vector $(L,M)$ with probability proportional to 
$$X^LZ_{L,M},$$
where $X\ge 0$ is a parameter called \emph{fugacity} and $Z_{L,M}$ is given by
\begin{equation*}
Z_{L,M} = \frac{1}{L! M!} \int_{\mathbb{R}^L}  \int_{\mathbb{R}^M} \E^{-E_{L,M} (\xi, \zeta)} d\mu^L(\xi) d\mu^M(\zeta), 
\end{equation*}
$\mu^L,\ \mu^M$ being the Lebesgue measures on $\R^L$ and $\R^M$ respectively.
 We denote by $Z_n(X)$ the total partition function of the system, that is
$$Z_n(X)=\sum_{L+2M=2n}X^L Z_{L,M}.$$
Let $L_n(\gamma)$ represent the number of charge one particles, in the scaling $X=\sqrt{2n\gamma}$ with $\gamma>0$. In this regime, the proportion of such particles is non-trivial. In \cite{RSX13}, the authors gave a representation of the total partition function $Z_n(X)$ as a product of generalized Laguerre polynomials with parameters $-\frac{1}{2}$. As a consequence, in \cite[Section 2]{DHR18}, it is shown that the normalized sequence $$\left(\frac{L_n(\gamma)-\mathbb{E}\left[L_n(\gamma)\right]}{n^{1/3}}\right)_{n\in\N}$$
is mod-Gaussian convergent on the complex plane with parameters $t_n=n^{1/3}\sigma_n^2(\gamma)$ and limiting function $\psi(z)=\exp (M(\gamma)\frac{z^3}{6})$ ; here $\sigma_n^2(\gamma)=\frac{\text{Var}\left(L_n(\gamma)\right)}{n}$ and 
$$M(\gamma) = \lim_{n \to \infty} \frac{\kappa^{(3)}(L_n(\gamma))}{n}.$$ 
The precise values of $\sigma^2(\gamma) = \lim_{n \to \infty} \sigma_n^2(\gamma)$ and $M(\gamma)$ are computed in \cite[Proposition 2.2]{DHR18}.  On the other hand, the mod-Gaussian convergence has a zone of control of order $O\left(t_n\right)$ and index $(3,3)$. A straightforward application of Theorem \ref{thm:locallimit1} shows that for any $\delta \in (0,\frac{1}{2})$,
\begin{equation*}
\proba\left[\frac{L_n(\gamma)-\esper[L_n(\gamma)]}{n^{1/2}\,\sigma_n(\gamma)}-x\in n^{-\delta}B\right]\simeq n^{-\delta}\,\frac{\E^{-\frac{x^2}{2}}}{\sqrt{2\pi}}\,m(B).
\end{equation*}
\medskip

\paragraph{\emph{The unit circle.}}
In the circular version of the above ensemble introduced in \cite{SS14}, the particles are located on the unite circle, instead of on the real line. Unlike the previous model, the total energy of the system is given by the interacting energy between the particles, and there is no external field contributing to it:
\begin{align*}
E_{L,M} &= -\sum_{1\leq i<j\leq L }\log |\xi_i-\xi_j| - 2 \sum_{i=1}^L\sum_{j=1}^M \log |\xi_i-\zeta_j|- 4\sum_{1\leq i<j\leq M }\log |\zeta_i-\zeta_j|.
\end{align*}
Let $L_n(\rho)$ denote the number of charge one particles, in the scaling $X=2n\rho$ with $\rho>0$. Using the polynomial product structure of the partition function established by Forrester (see \cite[Section 7.10]{For10}), it is possible to prove \cite[Section 3]{DHR18} that the normalized sequence $$\left(\frac{L_n(\rho)-\mathbb{E}\left[L_n(\rho)\right]}{n^{1/3}}\right)_{n\in\N}$$
converges mod-Gaussian on the whole complex plane with parameters $t_n=n^{1/3}\sigma_n^2(\rho)$, limiting function
 $$\psi(z)=\exp \left(\frac{z^3}{6} \left(\rho\arctan\frac{1}{\rho}-\frac{\rho^4+3\rho^2}{\left(\rho^2+1\right)^2}\right) \right)$$ 
 and with a zone of control of order $O\left(t_n\right)$ and index $(3,3)$. Again $\sigma_n^2(\rho)=\frac{\text{Var}\left(L_n(\rho)\right)}{n}$. Therefore, for any $\delta \in (0,\frac{1}{2})$,
\begin{equation*}
\proba\left[\frac{L_n(\rho)-\mathbb{E}\left[L_n(\rho)\right]}{n^{1/2}\,\sigma_n(\rho)}-x\in n^{-\delta}B\right]\simeq n^{-\delta}\,\frac{\E^{-\frac{x^2}{2}}}{\sqrt{2\pi}}\,m(B).
\end{equation*}
\medskip

\subsection{Determinant of the GUE}\label{subsec:gue}
Let $W_n^H$ be a $n\times n$ random matrix in the Gaussian Unitary Ensemble.
Denote by
\begin{equation*}
X_n^{H}:=\log|\det W_{n}^{H}|-\mu_n^H,
\end{equation*}
the logarithm of the modulus of the determinant, properly centered. The centering is given by
\begin{equation*}
\mu_n^H=\frac{1}{2}\log2\pi-\frac{n}{2}+\frac{n}{2}\log n,
\end{equation*}
and corresponds (up to constant terms) to the expectation of $\log\det |W_{n}^{H}|$. 
The Mellin transform of this statistics has been calculated explicitly by Metha and Normand in \cite{MN98}. Thus
$$
\esper\!\left[|\det W_{n}^{H}|^z\right]=2^{\frac{nz}{2}}\,\prod_{k=1}^n\frac{\Gamma\!\left(\frac{z+1}{2}+\lfloor \frac{k}{2}\rfloor\right)}{\Gamma\!\left(\frac{1}{2}+\lfloor \frac{k}{2}\rfloor\right)},
$$
is analytic for all $z\in\C$ with $\mathrm{Re}(z)>-1$. Recently, the same representation has been derived by Edelman and La Croix in \cite{CE15}, noticing that $|\det W_n^H|$ is distributed as the product of the singular values of the GUE. Relying on this explicit formula, it is possible to prove that the sequence $(X_n^H)_{n\in\N}$ is mod-Gaussian convergent on $D=\left(-1,+\infty\right)\times \I\R$, with parameters $t_n=\frac{1}{2}\log\frac{n}{2}$ and limiting function
\begin{equation*}
\psi(z)=\log\frac{\Gamma(\frac{1}{2})\,\left(G(\frac{1}{2})\right)^2}{\Gamma(\frac{z+1}{2})\,\left(G(\frac{z+1}{2})\right)^2}.
\end{equation*}
Moreover, this sequence has a zone of control of size $O(t_n)$ and index $(1,3)$. Hence, by Theorem \ref{thm:locallimit1}, we obtain that
\begin{equation*}
\proba\left[\frac{X_n^H}{\sqrt{\frac{1}{2}\log\frac{n}{2}}}-x\in (\log n)^{-\delta}B\right]\simeq (\log n)^{-\delta}\,\frac{\E^{-\frac{x^2}{2}}}{\sqrt{2\pi}}\,m(B),
\end{equation*}
for every $\delta\in \left(0,\frac{3}{2}\right)$.\medskip

\subsection{Determinants of \texorpdfstring{$\beta$}{beta}-ensembles}\label{subsec:beta}
A result analogous to the previous one can be established for log-determinants of matrices in some well-known $\beta$-ensembles. Namely, let $W_{n}^{i,\beta}$ be a random matrix in the:
\begin{enumerate}
\item[\footnotesize$(i=L)$] Laguerre ensemble with parameters $(n,n,\beta)$,
\item[\footnotesize$(i=J)$] Jacobi ensemble, with parameters $(\lfloor n\tau_1\rfloor,\lfloor n\tau_1\rfloor,\lfloor n\tau_2\rfloor,\beta)$, where $\tau_1,\,\tau_2>0$,
\item[\footnotesize$(i=G)$] Uniform Gram ensemble of parameters $(n,n,\beta)$.
\end{enumerate}
We refer to \cite[Section 3]{DHR19} for the precise definitions. For all $i$, denote by 
\begin{equation*}
X_n^{i,\beta}:=\log\det W_{n}^{i,\beta}-\mu_{n}^{i,\beta}
\end{equation*}
the logarithm of the determinant, properly centered. As for the GUE case, the centering parameters $\mu_{n}^{i,\beta}$ correspond (up to constant terms) to the expectation of the log-determinants; see \cite[Lemma 4.2]{DHR19} for their explicit expressions.\medskip

For these statistics, the moment generating functions can be calculated by means of Selberg integrals. As a consequence, they are all given by a product of Gamma functions; for instance, for the $\beta$-Laguerre ensemble,
$$\esper\left[\E^{zX_n^{L,\beta}}\right]=\E^{-z\mu_{n}^{L,\beta}}2^{nz}\prod_{k=1}^n\frac{\Gamma(\frac{\beta}{2}k+z)}{\Gamma(\frac{\beta}{2}k)}.
$$
Classical techniques of complex analysis enable us to find an asymptotic expansion of these product formulas as $n$ goes to infinity. In turn, these expansions imply mod-Gaussian convergence for all the sequences $(X_n^{i,\beta})_{n\in\N}$ on
$$D=\left(-\frac{\beta}{2},+\infty\right)\times \I\R,$$
with parameters $t_n=\frac{2}{\beta}\log n$
and a zone of control of size $O(t_n)$, with index $(1,3)$.\\
Therefore, for all $i=L,J,G$ and any $\delta\in\left(0,\frac{3}{2}\right)$,
\begin{equation*}
\proba\left[\frac{X_n^{i,\beta}}{\sqrt{\frac{2}{\beta}\log n}}-x\in (\log n)^{-\delta}B\right]\simeq (\log n)^{-\delta}\,\frac{\E^{-\frac{x^2}{2}}}{\sqrt{2\pi}}\,m(B).
\end{equation*}
\medskip

\subsection{Characteristic polynomial of the circular \texorpdfstring{$\beta$}{beta}-Jacobi ensemble} \label{subsec:circular}
Let $W_n^{CJ,\beta}$ be a random matrix in the circular $\beta$-Jacobi ensemble of size $n$. We recall that the joint density function of the eigenangles $\left(\theta_1,\ldots,\theta_n\right)\in [0,2\pi]^n$ is proportional to
$$\prod_{1\le k<j\le n}\left|\E^{i\theta_k}-\E^{i\theta_l}\right|\prod_{k=1}^n\left(1-\E^{-i\theta_k}\right)^\delta\left(1-\E^{i\theta_k}\right)^{\bar{\delta}},$$
with $\delta\in\C$, $\mathrm{Re} (\delta)>-\frac{1}{3}$. Denote by
$$X_n^{CJ,\beta}:=\log\det\left|\mathrm{Id}-W_n^{CJ,\beta}\right|- \frac{\delta+\bar{\delta}}{\beta}$$
the logarithm of the determinant of the characteristic polynomial evaluated at $1$ and properly shifted. 
Starting from the representation of Laplace transform of $X_n^{CJ,\beta}$ computed in \cite[Formula 4.2]{BNR09}, one can establish the complex mod-Gaussian convergence of the sequence 
 $(X_n^{CJ,\beta})_{n\in\N}$ on the subset
$$D=\left(-\frac{\beta}{2},+\infty\right)\times \I\R,$$
with parameters $t_n=\frac{1}{2\beta}\log n$  and a zone of control of size $O(t_n)$, with index $(1,3)$. It follows then from Theorem \ref{thm:locallimit1} that for all $\delta\in\left(0,\frac{3}{2}\right)$,
\begin{equation*}
\proba\left[\frac{X_n^{CJ,\beta}}{\sqrt{\frac{1}{2\beta}\log n}}-x\in (\log n)^{-\delta}\,B\right]\simeq (\log n)^{-\delta}\,\frac{\E^{-\frac{x^2}{2}}}{\sqrt{2\pi}}\,m(B).
\end{equation*}
\bigskip

\section{\texorpdfstring{$\leb^1$}{L1}-mod-\texorpdfstring{$\phi$}{phi} convergence and local limit theorems}\label{sec:l1mod}

\subsection{Mod-\texorpdfstring{$\phi$}{phi} convergence in \texorpdfstring{$\leb^1(\I \R)$}{L1(iR)}}\label{subsec:l1}
In all the previous examples, by using the notion of zone of control, we identified a range of scales $(t_n)^{-\delta}$ at which the stable approximation of the random variables $Y_n$ holds. However, in certain examples, one has a control over the residues $\theta_n(\xi)$ that is valid over the whole real line. This raises the question whether the theory of mod-$\phi$ convergence can be used to prove local limit theorems that hold \emph{for any} infinitesimal scale. A sufficient condition for these strong local limit theorems is the notion of mod-$\phi$ convergence in $\leb^1(\I \R)$. It is a more abstract and restrictive condition than the notion of zone of control used in Theorem \ref{thm:locallimit1}, but it yields stronger results.

\begin{definition}\label{def:L1modphi}
Fix a reference stable law $\phi=\phi_{c,\alpha,\beta}$.
Let $(X_n)_{n\in\N}$ be a sequence that is mod-$\phi$ convergent on $D=\I \R$, with parameters $(t_n)_{n \in \N}$ and limiting function $\theta(\xi)$. We say that there is mod-$\phi$ convergence in $\leb^1(\I \R)$ if:
\begin{itemize}
     \item $\theta$ and the functions $\theta_n(\xi) = \esper[\E^{\I \xi X_n}]\,\eta^{-t_n\eta_{c,\alpha,\beta}(\I \xi)}$ belong to $\leb^1(\R)$;
     \item the convergence
\begin{equation*}
\theta_n(\xi)\longrightarrow\theta(\xi)
\end{equation*}
takes place in $\leb^1(\R)$: $\|\theta_n -\theta\|_{\leb^1(\R)}\to 0$.
 \end{itemize}  
\end{definition}
 
Roughly speaking, mod-convergence in $\leb^1(\I\R)$ is equivalent to the assumption that $\gamma=+\infty$ in the zone of control. The following theorem makes this statement more precise.

\begin{theorem}\label{thm:locallimit2}
Let $(X_n)_{n \in \N}$ be a sequence that converges mod-$\phi_{c,\alpha,\beta}$ in $\leb^1(\I\R)$, with parameters $(t_n)_{n \in \N}$ and limiting function $\theta$. 
Let $x \in \R$ and $B$ be a fixed Jordan measurable subset with $m(B)>0$. Then, for any sequence $s_n \to +\infty$,
$$ \lim_{n \to \infty} s_n\,\,\proba\!\!\left[Y_n-x \in \frac{1}{s_n}\,B\right] = p_{c,\alpha,\beta}(x)\,m(B),$$
where $Y_n$ is obtained from $X_n$ as in Proposition \ref{prop:convlaw}.
\end{theorem}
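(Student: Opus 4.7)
The strategy is to upgrade the $\leb^1$ convergence of the residues into \emph{uniform} convergence of the densities $p_n$ of $Y_n$ toward $p_{c,\alpha,\beta}$, and then to conclude by a direct dominated convergence argument on the bounded Jordan set $B$. This bypasses the approximation by test functions $\testf_0(\R)$ used in Theorem~\ref{thm:locallimit1}: morally, mod-$\phi$ convergence in $\leb^1(\I\R)$ plays the role of a zone of control of infinite size $\gamma = +\infty$, so the stable approximation should hold at \emph{every} infinitesimal scale $(s_n)^{-1}$.

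The first step is to check that $Y_n$ admits a density. The scaling property of $\eta_{c,\alpha,\beta}$ together with the definition of $Y_n$ in Proposition~\ref{prop:convlaw} (whose logarithmic drift for $\alpha=1$ is designed precisely to absorb the extra term in the scaling identity) gives, uniformly in $\xi \in \R$,
\begin{equation*}
\esper\!\left[\E^{\I \xi Y_n}\right] \;=\; \theta_n\!\left(\tfrac{\xi}{(t_n)^{1/\alpha}}\right)\,\E^{\eta_{c,\alpha,\beta}(\I \xi)}.
\end{equation*}
Since $\theta_n \in \leb^1(\R)$ and $|\E^{\eta_{c,\alpha,\beta}(\I\xi)}| = \E^{-|c\xi|^\alpha}\leq 1$, this characteristic function is in $\leb^1(\R)$ (with norm bounded by $(t_n)^{1/\alpha}\|\theta_n\|_{\leb^1}$), so Fourier inversion produces a continuous density $p_n$ for $Y_n$, and $p_{c,\alpha,\beta}$ is likewise continuous. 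Subtracting the two inversion formulas and changing variables $\zeta = \xi/(t_n)^{1/\alpha}$ yields the uniform bound
\begin{equation*}
\|p_n - p_{c,\alpha,\beta}\|_\infty \;\leq\; \frac{(t_n)^{1/\alpha}}{2\pi}\int_\R |\theta_n(\zeta) - 1|\,\E^{-t_n\,|c\zeta|^\alpha}\DD{\zeta},
\end{equation*}
and the heart of the proof is to show the right-hand side tends to $0$. I would split at $|\zeta|=\varepsilon$: on $[-\varepsilon,\varepsilon]$, the locally uniform convergence $\theta_n \to \theta$ and the continuity of $\theta$ at $0$ with $\theta(0)=1$ make $\sup_{|\zeta|\leq\varepsilon}|\theta_n(\zeta)-1|$ arbitrarily small (first take $\varepsilon$ small, then $n$ large), while $(t_n)^{1/\alpha}\int_\R \E^{-t_n |c\zeta|^\alpha}\DD{\zeta}$ is a constant independent of $n$ after the change of variables $\omega = c\zeta(t_n)^{1/\alpha}$. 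On the complement $|\zeta|>\varepsilon$, I bound $|\theta_n(\zeta)-1|\,\E^{-t_n|c\zeta|^\alpha}$ by $|\theta_n(\zeta)|\,\E^{-t_n|c\varepsilon|^\alpha} + \E^{-t_n|c\zeta|^\alpha}$ and integrate: the first contribution is at most $(t_n)^{1/\alpha}\E^{-t_n|c\varepsilon|^\alpha}\|\theta_n\|_{\leb^1}$, which vanishes because exponential decay in $t_n$ beats the polynomial prefactor (using that $\|\theta_n\|_{\leb^1}\to\|\theta\|_{\leb^1}$), and the second contribution again reduces by scaling to $\frac{1}{c}\int_{|\omega|>c\varepsilon (t_n)^{1/\alpha}}\E^{-|\omega|^\alpha}\DD{\omega}\to 0$.

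Once $\|p_n - p_{c,\alpha,\beta}\|_\infty \to 0$ is in hand, the conclusion is straightforward: write
\begin{equation*}
s_n\,\proba\!\left[Y_n - x \in (s_n)^{-1} B\right] \;=\; \int_B p_n\!\left(x + \tfrac{u}{s_n}\right)\DD{u},
\end{equation*}
observe that $p_n(x + u/s_n) \to p_{c,\alpha,\beta}(x)$ pointwise in $u$ (by the uniform density convergence and continuity of $p_{c,\alpha,\beta}$ at $x$), that the integrand is uniformly bounded by $\|p_{c,\alpha,\beta}\|_\infty + o(1)$, and that $B$ is bounded, so dominated convergence yields $p_{c,\alpha,\beta}(x)\,m(B)$. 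The main obstacle is the tail estimate in the second step: the prefactor $(t_n)^{1/\alpha}$ is precisely what one fears will blow up, and it is tamed only through the joint use of the $\leb^1$ control on $\theta_n$ and the super-exponential weight $\E^{-t_n|c\varepsilon|^\alpha}$ available away from zero.
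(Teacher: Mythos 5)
Your proof is correct, and it takes a genuinely different route from the paper's. The paper reduces the statement to estimates on smooth test functions $g\in\testf_0(\R)$ via Corollary~\ref{cor:approx}, writes $\esper[g(s_n(Y_n-x))]$ as a Fourier integral by Parseval, and establishes $\leb^1$ convergence of the integrand by splitting the error into three pieces $A+B+C$, each handled by a compact/tail dichotomy (locally uniform control of $\theta_n$ and continuity of $\theta$ at the origin for the compact part, $\leb^1(\R)$ control and the decay of the stable weight for the tail). You instead observe that the $\leb^1(\I\R)$ hypothesis forces $\esper[\E^{\I\xi Y_n}]$ to be integrable, so that $Y_n$ acquires a bounded continuous density $p_n$, and you prove the stronger intermediate statement $\|p_n - p_{c,\alpha,\beta}\|_\infty\to 0$ by Fourier inversion and the very same $\varepsilon$-split; the local limit theorem then follows by a one-line dominated convergence on the bounded set $B$, bypassing the test-function machinery entirely. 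The two routes use the same analytic ingredients, but yours isolates a cleaner and slightly stronger conclusion (uniform density convergence). The paper's route has the virtue of being structurally parallel to Theorem~\ref{thm:locallimit1}, where only a zone of control is assumed and $Y_n$ need not have a density, so the test-function framework is unavoidable there; your route exploits the additional strength of the $\leb^1$ hypothesis, which is precisely what makes densities available. One small caveat: your identity $\esper[\E^{\I\xi Y_n}] = \theta_n(\xi/(t_n)^{1/\alpha})\,\E^{\eta_{c,\alpha,\beta}(\I\xi)}$, which you correctly flag as relying on the logarithmic drift when $\alpha=1$, $\beta\neq 0$, should be checked against the sign conventions in Proposition~\ref{prop:convlaw} and the scaling identity; the paper's own proof uses the same relation implicitly, so this is not a gap in your argument relative to the paper, but it is a point worth spelling out when writing the proof in full.
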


\begin{proof}
For the same reasons as in the proof of Theorem \ref{thm:locallimit1}, it suffices to prove the estimate on test functions $g \in \testf_0(\R)$:
$$\lim_{n \to \infty} s_n\,\,\esper\!\left[g(s_n (Y_n-x))\right] = p_{c,\alpha,\beta}(x)\,\left(\int_{\R}g(y)\DD{y}\right).$$
By using Parseval's theorem and making the adequate changes of variables, we get
$$\esper\!\left[g(s_n (Y_n-x))\right] = \frac{1}{2\pi\,s_n}\,\int_{\R} \widehat{g}\!\left(\frac{\xi}{s_n}\right)\,\theta_n\!\left(-\frac{\xi}{(t_n)^{1/\alpha}}\right)\,\E^{\eta(-\I\xi)+\I x \xi}\DD{\xi}. $$
The function under the integral sign converges pointwise towards $\widehat{g}(0)\,\E^{\eta(-\I \xi)+\I x \xi}$, and this convergence actually occurs in $\leb^1(\R)$. Indeed,
$$
\int_{\R} |\E^{\eta(-\I \xi)+\I x \xi}|\,\left|\widehat{g}\!\left(\frac{\xi}{s_n}\right)\,\theta_n\!\left(-\frac{\xi}{(t_n)^{1/\alpha}}\right)-\widehat{g}(0)\right|\DD{\xi} \leq A+B+C
$$
with
\begin{align*}
A&= \int_{\R}|\E^{\eta(-\I \xi)}|\,\left|\widehat{g}\!\left(\frac{\xi}{s_n}\right)\right|\,\left|\theta_n\!\left(-\frac{\xi}{(t_n)^{1/\alpha}}\right)-\theta\!\left(-\frac{\xi}{(t_n)^{1/\alpha}}\right)\right|\DD{\xi};\\
B&=\int_{\R}|\E^{\eta(-\I \xi)}|\,\left|\widehat{g}\!\left(\frac{\xi}{s_n}\right)\right|\,\left|\theta\!\left(-\frac{\xi}{(t_n)^{1/\alpha}}\right)-1\right|\DD{\xi};\\
C&=\int_{\R}|\E^{\eta(-\I \xi)}|\,\left|\widehat{g}\!\left(\frac{\xi}{s_n}\right)-\widehat{g}(0)\right|\DD{\xi}.
\end{align*}
Since $\widehat{g}$ is bounded and $\widehat{g}(\frac{\xi}{s_n})-\widehat{g}(0)\to 0$ pointwise, one can apply the dominated convergence theorem to show that $C  \to 0$. For $A$, we make another change of variables and write
\begin{align*}
A&=\int_{\R}(t_n)^{1/\alpha}\,\E^{-t_n |c\upsilon|^\alpha}\,\left|\widehat{g}\!\left(\frac{\upsilon}{s_n\,(t_n)^{-1/\alpha}}\right)\right|\,\left|\theta_n(-\upsilon)-\theta(-\upsilon)\right|\,d\upsilon\\
&\leq \|\widehat{g}\|_\infty\,\int_{\R}(t_n)^{1/\alpha}\,\E^{-t_n |c\upsilon|^\alpha}\,\left|\theta_n(-\upsilon)-\theta(-\upsilon)\right|\,d\upsilon. 
\end{align*}
Fix $\eps>0$. Since $\theta_n$ converges locally uniformly towards $\theta$, there exists an interval $[-C,C]$ such that $|\theta_n(-\upsilon)-\theta(-\upsilon)| \leq \eps$ for any $\upsilon \in [-C,C]$. The part of $A$ corresponding to this interval is therefore smaller than
$$\eps\,\|\widehat{g}\|_\infty\,\int_{-C}^C(t_n)^{1/\alpha}\,\E^{-t_n |c\upsilon|^\alpha}\,d\upsilon \leq \eps \,\|\widehat{g}\|_\infty\,\int_{\R}\E^{ -|c\xi|^\alpha}\DD{\xi},$$
that is to say a constant times $\eps$. On the other hand, for $|\upsilon| \geq C$, 
$$(t_n)^{1/\alpha}\,\E^{-t_n |c\upsilon|^\alpha} \leq (t_n)^{1/\alpha}\,\E^{-t_n (cC)^\alpha} \to 0,$$
and the part of $A$ corresponding to $\R \setminus [-C,C]$ is smaller than
$$\|\widehat{g}\|_\infty\,(t_n)^{1/\alpha}\,\E^{-t_n (cC)^\alpha} \|\theta_n-\theta\|_{\mathrm{L}^1} \to 0 $$
since $\|\theta_n-\theta\|_{\leb^1}$ goes to zero. Hence, $A$ goes to zero. The same arguments allows one to show that $B \to 0$, using the continuity of $\theta$ at zero instead of the convergence $\theta_n \to \theta$ for the integral over an interval $[-C,C]$.\medskip

As a consequence of the convergence in $\leb^1$, one can now write
$$\esper\!\left[g(s_n (Y_n-x))\right] \simeq \frac{1}{2\pi\,s_n}\,\int_{\R} \widehat{g}(0)\,\E^{\eta(\I\xi)-\I x\xi}\DD{\xi} = \frac{1}{s_n}\,p_{c,\alpha,\beta}(x)\,\left(\int_{\R}g(y)\DD{y}\right),$$
which is what we wanted to prove.
\end{proof}
\medskip

\subsection{The winding number of the planar Brownian motion}\label{subsec:brownian}
As an application of our theory of $\leb^1$-mod-$\phi$ convergence, consider a standard planar Brownian motion $\left(Z_t\right)_{t\ge 0}$ starting at the point $(1,0)$. With probability 1, $Z_t$ does not visit the origin, so one can write $Z_t=R_t\, \E^{\I\varphi_t}$ with continuous functions $t\rightarrow R_t$ and $t\rightarrow \varphi_t$, and with $\varphi_0=0$. The process $\left(\varphi_t\right)_{t\ge 0}$ is called \emph{winding number} of the Brownian motion around the origin. Its Fourier transform has been calculated by Spitzer (see \cite{Spi58}), in terms of the modified Bessel function $I_\nu(z)=\sum_{k\ge 0}\frac{1}{k!\,\Gamma(\nu+k+1)}\left(\frac{z}{2}\right)^{\nu+2k}$. Thus,
\begin{equation*}
\esper\!\left[\E^{\I\xi\varphi_t}\right]=\sqrt{\frac{\pi}{8t}}\,\E^{-\frac{1}{4t}}\left(I_{\frac{|\xi|-1}{2}}\left(\frac{1}{4t}\right)+I_{\frac{|\xi|+1}{2}}\left(\frac{1}{4t}\right)\right).
\end{equation*}
As a consequence, in \cite{FMN17} it is shown that $\left(\varphi_t\right)_{t\ge 0}$ converges mod-Cauchy with parameters $\log \sqrt{8t}$, limiting function $$\theta(\xi)=\frac{\sqrt{\pi}}{\Gamma (\frac{|\xi|+1}{2})}$$
and with a control of index $(1,1)$ \emph{over the whole real line} $\R$. Since $\frac{1}{\omega-\alpha} = +\infty$, this means that one can consider zones of control $[-K(t_n)^\gamma,K(t_n)^\gamma]$ with $\gamma$ as large as wanted. In the sequel, we shall rework this example by using the notion of mod-$\phi$ convergence in $\leb^1(\I\R)$. 
\medskip

Notice first that
$$\Gamma\left(\frac{|\xi|+1}{2}\right) \geq \frac{2}{1+|\xi|}\,\,\Gamma\!\left(1+\frac{|\xi|}{2}\right) \geq \frac{2}{1+|\xi|}\,\left(\frac{|\xi|}{2\E}\right)^{\frac{|\xi|}{2}},$$
so that the limiting function $\theta(\xi)=\sqrt{\pi}\,(\Gamma (\frac{|\xi|+1}{2}))^{-1}$ is in $\leb^1(\R)$. So are the functions $\theta_t$. It remains to check that the convergence $\theta_t\rightarrow \theta$ happens in $\leb^1(\R)$: 
\begin{align*}
\|\theta_t-\theta\|_{\leb^1} &
\leq \left\|\sum_{k\ge 0}\left(\frac{\sqrt{\pi}\E^{-\frac{1}{4t}}}{k!\Gamma(k+\frac{|\xi|+1}{2})}\left(\frac{1}{8t}\right)^{2k}-\frac{\sqrt{\pi}\E^{-\frac{1}{4t}}}{k!\Gamma(k+\frac{|\xi|+3}{2})}\left(\frac{1}{8t}\right)^{2k+1}\right)-\frac{\sqrt{\pi}}{\Gamma(\frac{|\xi|+1}{2})}\right\|_{\leb^1}\\
&\leq \|\theta\|_{\leb^1}\left(1-\E^{-\frac{1}{4t}}\right)+ \sqrt{\pi}\,\E^{-\frac{1}{4t}} \left( \sum_{k=1}^\infty \frac{1}{k!}\left(\frac{1}{8t}\right)^{\!2k}\left\|\frac{1}{\Gamma(k+\frac{|\xi|+1}{2})}\right\|_{\leb^1} \right)\\
&\quad + \sqrt{\pi}\,\E^{-\frac{1}{4t}} \left( \sum_{k=0}^\infty \frac{1}{k!}\left(\frac{1}{8t}\right)^{\!2k+1}\left\|\frac{1}{\Gamma(k+\frac{|\xi|+3}{2})}\right\|_{\leb^1} \right)\\
&\leq \|\theta\|_{\leb^1}\left(\left(1-\E^{-\frac{1}{4t}}\right) + \E^{-\frac{1}{4t}} \left(\E^{\left(\frac{1}{8t}\right)^2}-1\right) + \frac{1}{8t}\, \E^{-\frac{1}{4t}+\left(\frac{1}{8t}\right)^2}\right) \stackrel{t\to \infty}{\longrightarrow} 0.
\end{align*}
Hence, $(\varphi_t)_{t \in \R_+}$ converges mod-Cauchy in $\leb^1(\I\R)$, and for any family $(s_t)_{t \in \R_+}$ growing to infinity, 
$$\lim_{t \to +\infty} s_t\,\,\proba\!\left[\frac{\varphi_t}{\log \sqrt{8t}} - x \in \frac{1}{s_t}\,B\right] = \frac{m(B)}{\pi(1+x^2)}.$$
\medskip

\subsection{The magnetisation of the Curie--Weiss model}\label{subsec:curieweiss}
In \cite{MN15}, another notion of mod-$\phi$ convergence in $\leb^1$ was introduced, in connection with models from statistical mechanics.
\begin{definition}\label{def:L1R}
Let $(X_n)_{n\in \N}$ be a sequence of random variables that is mod-Gaussian convergent on $D=\R$ (beware that the domain here is $\R$ and not $\I \R$), with parameters $(t_n)_{n \in \N}$ and limiting function $\psi(x)$. We say that there is mod-Gaussian convergence in $\leb^{1}(\R)$ if:
\begin{itemize}
     \item $\psi$ and the functions $\psi_n(x) = \esper[\E^{xX_n}]\,\E^{-\frac{t_nx^2}{2}}$ belong to $\leb^1(\R)$;
     \item the convergence
\begin{equation*}
\psi_n(x)\longrightarrow \psi(x)
\end{equation*}
occurs in $\leb^1(\R)$: $\|\psi_n -\psi\|_{\leb^1(\R)}\to 0$.
 \end{itemize}
\end{definition}

\noindent This definition mimics Definition \ref{def:L1modphi} with a domain $\R$ instead of $\I\R$.  This framework  allows one to prove the convergence in distribution for sequences which are obtained from $(X_n)_{n\in\N}$ by an exponential change of measure. We recall without proof this result (see \cite[Theorem 6]{MN15}).

\begin{theorem}\label{thm:changeofmeasure}
Let $(X_n)_{n \in \N}$ be a sequence of real-valued random variables that converges mod-Gaussian in $\leb^1(\R)$, with parameters $(t_n)_{n \in \N}$ and limit $\psi$. Denote by $(Y_n)_{n \in \N}$ the sequence obtained by the change of measures
\begin{equation*}
\proba_{Y_n}[\!\DD{x}] = \frac{\E^{\frac{x^2}{2t_n}} }{\esper\!\left[\E^{\frac{(X_n)^2}{2t_n}}\right]}\,\proba_{X_n}[\!\DD{x}].
\end{equation*}
Then $(\frac{Y_n}{t_n})_{n \in \N}$ converges in law towards a random variable $W_\infty$ with density $\frac{\psi(x)\DD{x}}{\int_{\R}\psi(x)\DD{x}}$.
\end{theorem}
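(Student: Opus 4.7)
The plan is to compute the characteristic function of $Y_n/t_n$ in closed form as a simple transform of the Fourier transform $\widehat{\psi_n}$, and then to pass to the limit using the $\leb^1(\R)$ hypothesis together with L\'evy's continuity theorem (equivalently, one can mimic the smooth test function argument of Theorem~\ref{thm:locallimit2}).

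The key computation relies on the Hubbard--Stratonovich identity
\[
\E^{z^2/(2t_n)} \;=\; \sqrt{\frac{t_n}{2\pi}}\int_{\R} \E^{zy - t_n y^2/2}\DD{y},
\]
which holds for every complex $z$ by analytic continuation from $z\in\R$. Since $\I\xi X_n/t_n + X_n^2/(2t_n) = \xi^2/(2t_n) + (X_n + \I\xi)^2/(2t_n)$, applying this identity with $z = X_n + \I\xi$, taking expectations, and swapping the $y$-integral with $\esper$ (by Fubini) gives
\[
\esper\!\left[\E^{\I\xi X_n/t_n}\,\E^{X_n^2/(2t_n)}\right] \;=\; \E^{\xi^2/(2t_n)}\,\sqrt{\frac{t_n}{2\pi}}\int_{\R}\psi_n(y)\,\E^{\I\xi y}\DD{y} \;=\; \E^{\xi^2/(2t_n)}\,\sqrt{\frac{t_n}{2\pi}}\,\widehat{\psi_n}(\xi).
\]
Specialising to $\xi=0$ identifies the normaliser $\esper[\E^{X_n^2/(2t_n)}] = \sqrt{t_n/(2\pi)}\,\widehat{\psi_n}(0)$, and dividing then produces the compact formula
\[
\esper\!\left[\E^{\I\xi Y_n/t_n}\right] \;=\; \E^{\xi^2/(2t_n)}\,\frac{\widehat{\psi_n}(\xi)}{\widehat{\psi_n}(0)}.
\]

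To finish, I would use that $\|\psi_n-\psi\|_{\leb^1(\R)}\to 0$ forces the uniform convergence $\widehat{\psi_n}\to\widehat{\psi}$ on $\R$; in particular $\widehat{\psi_n}(0)=\int\psi_n \to \int\psi = \widehat{\psi}(0)$, and this limit is strictly positive because $\psi$ is a nonnegative limit of the nonnegative functions $\psi_n$, is continuous at the origin by mod-Gaussian convergence, and satisfies $\psi(0)=1$. Combined with $\E^{\xi^2/(2t_n)}\to 1$ pointwise, this yields
\[
\esper\!\left[\E^{\I\xi Y_n/t_n}\right] \;\longrightarrow\; \frac{\widehat{\psi}(\xi)}{\int_{\R}\psi(y)\DD{y}},
\]
which is the characteristic function of the probability law with density $\psi(y)/\int\psi$. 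The limit is continuous at $\xi=0$ (as a Fourier transform of an $\leb^1$ function), so L\'evy's continuity theorem delivers the distributional convergence $Y_n/t_n \rightharpoonup W_\infty$.

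The main delicate point is the use of the Hubbard--Stratonovich identity at the complex argument $z=X_n+\I\xi$, which requires interchanging $\esper$ with the $y$-integral. The Fubini hypothesis reduces to the integrability of $|\E^{(X_n+\I\xi)y-t_n y^2/2}| = \E^{X_n y - t_n y^2/2}$, whose $y$-integral is $\sqrt{2\pi/t_n}\,\E^{X_n^2/(2t_n)}$, and whose expectation is $\int\psi_n<\infty$ by the very assumption of mod-$\phi$ convergence in $\leb^1(\R)$. Everything else is a routine consequence of the contraction $\|\widehat{f}\|_\infty\leq\|f\|_{\leb^1}$ applied to $f=\psi_n-\psi$; alternatively, one can avoid invoking L\'evy's theorem and proceed as in the proof of Theorem~\ref{thm:locallimit2}, testing against $f\in\testf_0(\R)$ and then extending to $\mathbf{1}_B$ for any Jordan measurable $B$ via Corollary~\ref{cor:approx}.
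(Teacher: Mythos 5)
The paper states this theorem without proof, citing \cite[Theorem 6]{MN15}, so there is no internal argument to compare against. Your derivation is correct and self-contained. The Hubbard--Stratonovich identity at complex argument cleanly produces the closed form
\[
\esper\!\left[\E^{\I\xi Y_n/t_n}\right] = \E^{\xi^2/(2t_n)}\,\frac{\widehat{\psi_n}(\xi)}{\widehat{\psi_n}(0)},
\]
and your Fubini justification is sound: applying Tonelli to the nonnegative function $(\omega,y)\mapsto \E^{X_n(\omega)y - t_n y^2/2}$ first shows that $\esper[\E^{X_n^2/(2t_n)}] = \sqrt{t_n/2\pi}\,\|\psi_n\|_{\leb^1} < \infty$ is automatic under Definition~\ref{def:L1R}, after which the complex Fubini swap is legitimate. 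Passing to the limit via $\|\widehat{\psi_n}-\widehat{\psi}\|_\infty \le \|\psi_n-\psi\|_{\leb^1}\to 0$, $\E^{\xi^2/(2t_n)}\to 1$, and Lévy's continuity theorem (continuity of $\widehat\psi$ at $0$ being automatic for $\psi\in\leb^1$) completes the argument. It is worth making explicit that $\psi_n(x) = \esper[\E^{xX_n}]\E^{-t_n x^2/2} > 0$ pointwise, which guarantees $\widehat{\psi_n}(0) > 0$, hence the division is licit, and in the limit ensures $\psi\ge 0$ with $\int_\R\psi > 0$ (using $\psi(0)=1$ and continuity) so that $\psi/\int\psi$ is a genuine density --- you gesture at this but it is the one point a reader would want spelled out. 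Your Fourier--Parseval approach together with $\leb^1$ convergence of the residues is entirely consistent with the paper's machinery in the proofs of Theorems~\ref{thm:locallimit2} and~\ref{thm:locallimitchange}.
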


In this setting and with mild additional hypotheses, one can identify the infinitesimal scales at which a local limit theorem holds for $(Y_n)_{n \in \N}$. The precise assumptions are the following:
\begin{assumption}
Let $(X_n)_{n \in \N}$ be a sequence of real-valued random variables. We assume that:
\begin{enumerate}[label=(A\arabic*)]
     \item\label{hyp:change1} The sequence $(X_n)_{n \in \N}$ is mod-Gaussian convergent in $\leb^1(\R)$, with parameters $(t_n)_{n \in \N}$ and limit $\psi$.
  \item\label{hyp:change3}
  For every $M>0$,
  $$ \sup_{n \in \N}\sup_{m \in [-M,M]}\left(\int_{\R} |\psi_n(x+\I m)|\DD{x} \right)<+\infty.$$
  We denote $C(M)$ the constant in this bound.
 \end{enumerate}
\end{assumption}

\begin{theorem}\label{thm:locallimitchange}
If Conditions \ref{hyp:change1} and \ref{hyp:change3} are satisfied, then for any $\eps \in (0,1]$,
$$\lim_{n \to \infty} (t_n)^{\eps} \,\,\proba\!\left[\frac{Y_n}{t_n}-x\in \frac{1}{(t_n)^{\eps}}\,B\right] = \frac{\psi(x)\,m(B)}{\int_{\R}\psi(y)\DD{y}},$$
where $Y_n$ is obtained from $X_n$ by the exponential change of measure of Theorem \ref{thm:changeofmeasure}.
\end{theorem}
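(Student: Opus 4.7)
The plan is to mimic the Fourier-analytic proof of Theorem~\ref{thm:locallimit2}, but replacing the direct control of $\theta_n(\xi)$ on $\I \R$ by a control of the exponentially tilted characteristic function of $Y_n$ obtained through the Gaussian representation of $\E^{X_n^2/(2t_n)}$. As a first reduction, by Corollary~\ref{cor:approx} and the two-sided sandwich argument used in the proof of Theorem~\ref{thm:locallimit1}, it suffices to establish that for every $g \in \testf_0(\R)$,
$$(t_n)^{\eps}\,\esper\!\left[g\bigl((t_n)^{\eps}(Y_n/t_n - x)\bigr)\right] \;\longrightarrow\; \frac{\psi(x)\,\int_{\R}g(y)\DD{y}}{\int_{\R}\psi(z)\DD{z}}.$$

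To access this quantity, I would first derive a clean formula for the characteristic function of $Y_n$. Starting from the Gaussian identity $\E^{x^2/(2t_n)} = \sqrt{t_n/(2\pi)}\int_{\R}\E^{-t_n g^2/2 + xg}\DD{g}$, Fubini and the mod-Gaussian factorisation $\esper[\E^{zX_n}] = \E^{t_n z^2/2}\psi_n(z)$ give
$$\esper\!\left[\E^{\I \zeta X_n + X_n^2/(2t_n)}\right] = \sqrt{\tfrac{t_n}{2\pi}}\,\E^{-t_n\zeta^2/2}\int_{\R}\E^{\I t_n\zeta g}\,\psi_n(g + \I \zeta)\DD{g}.$$
Assumption~\ref{hyp:change3} then authorises a contour shift of the inner integral from $\R$ to $\R - \I \zeta$ (each $\psi_n$ is entire since the Laplace transform is defined on all of $\R$, and the strip-integrability bound $C(|\zeta|)$ forces the vertical boundary terms to vanish along a suitable subsequence $R\to\infty$). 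After normalisation this produces
$$\esper[\E^{\I \zeta Y_n}] = \frac{\E^{t_n\zeta^2/2}\,\widehat{\psi_n}(t_n\zeta)}{\widehat{\psi_n}(0)}.$$
Injecting this identity into Fourier inversion for $g\bigl((t_n)^{\eps}(\cdot - x)\bigr)$ (whose Fourier transform has support in $[-K,K]$) and performing the change of variable $u = \xi(t_n)^{\eps}$ yields
$$(t_n)^{\eps}\,\esper\!\left[g\bigl((t_n)^{\eps}(Y_n/t_n - x)\bigr)\right] = \frac{1}{2\pi\,\widehat{\psi_n}(0)}\int_{-K(t_n)^{\eps}}^{K(t_n)^{\eps}} \widehat{g}\!\left(\tfrac{u}{(t_n)^{\eps}}\right)\E^{\I u x}\,\E^{u^2/(2t_n)}\,\widehat{\psi_n}(-u)\DD{u}.$$

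The finishing move is dominated convergence. Pointwise the integrand tends to $\widehat{g}(0)\,\E^{\I u x}\,\widehat{\psi}(-u)$, using continuity of $\widehat{g}$ at $0$, the trivial limit $\E^{u^2/(2t_n)}\to 1$, and the uniform convergence $\widehat{\psi_n}\to\widehat{\psi}$ inherited from $\|\psi_n - \psi\|_{\leb^1(\R)}\to 0$. The key ingredient for the $\leb^1$-majorant is a \emph{second} contour shift: moving the line of integration in $\widehat{\psi_n}(-u) = \int\psi_n(y)\E^{-\I u y}\DD{y}$ down by $M\,\mathrm{sgn}(u)$, assumption~\ref{hyp:change3} delivers the uniform exponential decay
$$|\widehat{\psi_n}(-u)| \leq C(M)\,\E^{-M|u|}\qquad \text{for every } M>0 \text{ and every } n.$$
Since $\eps \leq 1$, on the range $|u|\leq K(t_n)^{\eps}$ we have $|u|\leq M t_n$ for $n$ large (choosing $M \geq K$), so $\E^{u^2/(2t_n)} \leq \E^{M|u|/2}$ there, and the integrand is dominated by $\|\widehat{g}\|_{\infty}\,C(M)\,\E^{-M|u|/2}$, which belongs to $\leb^1(\R)$. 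Letting $n\to\infty$ and invoking Fourier inversion (legitimate because the same exponential bound shows $\widehat{\psi}\in\leb^1(\R)$) one obtains $\widehat{g}(0)\,\psi(x)/\widehat{\psi}(0)$, which is exactly the desired limit.

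The main obstacle is the rigorous justification of the two contour shifts, together with the extraction of the exponential decay of $\widehat{\psi_n}$. Both facts boil down to the interplay between entireness of $\psi_n$ and the strip-integrability $C(M)$ provided by~\ref{hyp:change3}, and they are in fact the reason why the statement is restricted to $\eps\in(0,1]$: for $\eps>1$ the contour shift in the derivation of the characteristic function would require controlling $\int|\psi_n(x+\I m)|\DD{x}$ for $|m|$ of order $(t_n)^{\eps-1}\to\infty$, which is beyond the scope of~\ref{hyp:change3}.
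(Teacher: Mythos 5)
Your proof is correct and takes essentially the same route as the paper: reduce to test functions via Corollary~\ref{cor:approx}, write $\esper[g_n(Y_n/t_n)]$ by Parseval's formula as $\frac{1}{2\pi\widehat{\psi_n}(0)}\int\widehat{g_n}(\xi)\,\E^{\xi^2/(2t_n)}\,\widehat{\psi_n}(-\xi)\DD{\xi}$, obtain the uniform bound $|\widehat{\psi_n}(\xi)|\le 2C(M)\,\E^{-M|\xi|}$ by the vertical contour shift permitted by~\ref{hyp:change3} (this is exactly Lemma~\ref{lem:exponentialdecay}), and conclude by dominated convergence, the hypothesis $\eps\le 1$ being exactly what makes $\E^{u^2/(2t_n)}\le\E^{M|u|/2}$ on the support $|u|\le K(t_n)^{\eps}$. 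One small quibble: your first contour shift is superfluous, because the identity $\esper[\E^{\I\zeta Y_n}]=\E^{t_n\zeta^2/2}\,\widehat{\psi_n}(t_n\zeta)/\widehat{\psi_n}(0)$ holds for every real $\zeta$ simply by completing the square in $\widehat{\psi_n}(t_n\zeta)=\int\esper[\E^{xX_n}]\,\E^{-t_n x^2/2+\I t_n\zeta x}\DD{x}$, with no appeal to~\ref{hyp:change3}; consequently your closing paragraph's attribution of the restriction $\eps\le 1$ to that shift is not accurate — the genuine obstruction is the domination step you also correctly identify.
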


\begin{lemma}\label{lem:exponentialdecay}
If $(X_n)_{n \in \N}$ satisfies Conditions \ref{hyp:change1} and \ref{hyp:change3}, then
$$|\widehat{\psi_n}(\xi)| \leq 2C(M)\,\E^{-M|\xi|}\quad\text{for any $\xi\in \R$ and any $M>0$}.$$
\end{lemma}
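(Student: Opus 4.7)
The plan is to derive the bound by shifting the contour of integration in the Fourier transform of $\psi_n$. Since the sequence $(X_n)_{n \in \N}$ converges mod-Gaussian on the domain $D = \R$ (cf.~Definition \ref{def:L1R}), the Laplace transform $z \mapsto \esper[\E^{zX_n}]$ extends to an entire function on $\C$, and the same holds for $\psi_n(z) = \esper[\E^{zX_n}]\,\E^{-t_nz^2/2}$. For any $\xi \in \R$ and any $m \in [-M,M]$, I will establish the contour identity
$$\widehat{\psi_n}(\xi) = \int_\R \psi_n(x)\,\E^{\I x\xi}\DD{x} = \E^{-m\xi}\,\int_\R \psi_n(x+\I m)\,\E^{\I x\xi}\DD{x}$$
by applying Cauchy's theorem to the holomorphic function $z \mapsto \psi_n(z)\,\E^{\I z\xi}$ on rectangles with vertices $-R',R,R+\I m,-R'+\I m$, and sending $R,R' \to +\infty$.

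Once this identity is available, the lemma follows immediately: taking $m = M\,\mathrm{sgn}(\xi)$ (and $m=0$ for $\xi=0$) gives $\E^{-m\xi}=\E^{-M|\xi|}$, and hypothesis \ref{hyp:change3} controls the shifted integral:
$$|\widehat{\psi_n}(\xi)| \leq \E^{-M|\xi|}\,\int_\R |\psi_n(x+\I M\,\mathrm{sgn}(\xi))|\DD{x} \leq C(M)\,\E^{-M|\xi|}.$$
The factor $2$ in the statement is harmless slack. Viewed abstractly, the result is simply the classical Paley--Wiener observation that integrability of a holomorphic function on a horizontal strip forces exponential decay of its Fourier transform, applied uniformly in $n$ to the family $(\psi_n)_{n \in \N}$.

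The only real work is to show that the vertical sides of the rectangle contribute negligibly when $R,R' \to +\infty$. This will be handled via a Fubini argument: by \ref{hyp:change3}, the nonnegative function $x \mapsto \int_{-M}^M |\psi_n(x+\I m)|\DD{m}$ is integrable on $\R$ with total mass at most $2M\,C(M)$, so one can extract subsequences $R_k, R'_k \to +\infty$ along which
$$\int_{-M}^M |\psi_n(R_k+\I m)|\DD{m} \to 0 \qquad\text{and}\qquad \int_{-M}^M |\psi_n(-R'_k+\I m)|\DD{m} \to 0.$$
Since $|\E^{\I(x+\I m)\xi}| = \E^{-m\xi}$ is uniformly bounded by $\E^{M|\xi|}$ on the closed rectangle, the two vertical contributions are of order $\E^{M|\xi|}$ times these vanishing quantities. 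Passing to the limit along the subsequence closes Cauchy's theorem, yielding the contour identity and hence the lemma. No additional input beyond entirety of $\psi_n$ and hypothesis \ref{hyp:change3} is required.
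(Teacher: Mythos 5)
Your proof is correct and takes essentially the same approach as the paper: a Cauchy contour shift exploiting the uniform integrability of $\psi_n$ on horizontal lines in the strip $|\mathrm{Im}(z)|\leq M$, as in Reed--Simon. The only differences are that you spell out the disposal of the vertical sides (via the Fubini/subsequence argument, which the paper glosses over by appeal to ``existence and boundedness of all the integrals''), and that your choice $m=M\,\mathrm{sgn}(\xi)$ yields the sharper constant $C(M)$ where the paper gets $2C(M)$ from the cruder inequality $\E^{M|\xi|}\le\E^{M\xi}+\E^{-M\xi}$.
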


\begin{proof}
This is the content of \cite[p.~132]{RS75}, which we reproduce here for the convenience of the reader. Set $\psi_{n,M}(x)=\psi_n(x+\I M)$. Applying the Cauchy integral theorem,
\begin{align*}
\widehat{\psi_{n,M}}(\xi) &= \int_{\R}\psi_n(x+\I M)\,\E^{\I x \xi} \DD{x} =  \left(\int_{\R}\psi_n(x+\I M)\,\E^{\I (x + \I M) \xi} \DD{x}\right)\E^{M\xi} \\
&= \left(\int_{\R}\psi_n(x)\,\E^{\I x\xi} \DD{x}\right)\E^{M\xi} = \widehat{\psi_n}(\xi)\,\E^{M\xi}
\end{align*}
by analyticity of the function $\psi_n(z)\,\E^{\I z\xi}$, and existence and boundedness of all the integrals $ \int_{\R}\psi_n(x+\I m)\,\E^{\I (x+\I m)\xi}\DD{x}$. Therefore, 
\begin{equation*}
|\widehat{\psi_n}(\xi)|\,\E^{M|\xi|} \leq 
|\widehat{\psi_n}(\xi)|\left(\E^{M\xi}+\E^{-M\xi}\right)\leq
|\widehat{\psi_{n,M}}(\xi)| +|\widehat{\psi_{n,-M}}(\xi)| \leq 2C(M).\qedhere
\end{equation*}
\end{proof}
\medskip

\begin{proof}[Proof of Theorem \ref{thm:locallimitchange}] In the sequel we set $I_n := \int_{\R} \psi_n(x)\DD{x}$ and $I_\infty := \int_{\R} \psi(x) \DD{x}$. As usual, it is sufficient to prove the estimate with test functions $f \in \testf_0(\R)$:
$$\lim_{n \to \infty} (t_n)^{\eps} \,\,\esper\!\left[g\!\left((t_n)^{\eps}\,\left(\frac{Y_n}{t_n}-x\right)\right)\right] = \frac{\psi(x)}{I_\infty}\,\left(\int_{\R} g(y)\DD{y}\right).$$
We compute, with $\widehat{g}$ compactly supported on $[-M,M]$ and $g_n(y)=g((t_n)^{\eps}(y-x))$:
\begin{align*}
\esper\!\left[g\!\left((t_n)^{\eps}\,\left(\frac{Y_n}{t_n}-x\right)\right)\right] &= \esper\!\left[g_n\!\left(\frac{Y_n}{t_n}\right)\right] = \frac{1}{2\pi I_n} \int_{\R} \widehat{g_n}(\xi) \,\E^{\frac{\xi^2}{2t_n}}\, \widehat{\psi_n}(-\xi) \DD{\xi} \\
&= \frac{1}{2\pi I_n\,(t_n)^{\eps}} \int_{\R} \widehat{g}\left(\frac{\xi}{(t_n)^{\eps}}\right) \,\E^{\frac{\xi^2}{2t_n}+\I x \xi}\,\widehat{\psi_n}(-\xi)\DD{\xi}.
\end{align*}
In the integral, $\widehat{g}(\frac{\xi}{(t_n)^{\eps}}) \,\E^{\frac{\xi^2}{2t_n}+\I x \xi}\,\widehat{\psi_n}(-\xi)$ converges pointwise to $\widehat{g}(0)\,\E^{\I x \xi}\,\widehat{\psi}(-\xi)$. Moreover, it is dominated by 
$$2\,\|\widehat{g}\|_\infty\,C(M)\,\E^{-M\frac{|\xi|}{2}}
$$
by Lemma \ref{lem:exponentialdecay}, and using the fact that $\eps\leq 1$. Hence, by dominated convergence,
\begin{align*}
  \lim_{n \to \infty} (t_n)^{\eps} \,\esper\!\left[g\!\left((t_n)^{\eps}\,\left(\frac{Y_n}{t_n}-x\right)\right)\right] &= \frac{\widehat{g}(0)}{2\pi I_\infty} \left(\int_{\R} \E^{\I x \xi} \widehat{\psi}(-\xi)\DD{\xi}\right) \\
  &= \frac{\psi(x)}{I_\infty}\,\left(\int_{\R} g(y)\DD{y}\right).\qedhere
 \end{align*} 
\end{proof}

\begin{example}
The Curie--Weiss model at critical temperature is the probability law on spin configurations $\sigma=\left(\sigma_i\right)_{i\in \lle 1,n\rre}\in\{\pm 1\}^n$ given by
\begin{equation*}
\mathbb{CW}(\sigma)=\frac{\E^{\frac{1}{2n}\left(\sum_{i=1}^n\sigma_i\right)^2}}{\sum_{\sigma} \E^{\frac{1}{2n}\left(\sum_{i=1}^n\sigma_i\right)^2}}.
\end{equation*}
The random quantity $M_n:=\sum_{i=1}^n\sigma_i$ under the law $\mathbb{CW}$ is the \emph{total magnetization} of the Curie--Weiss model.\medskip

One can interpret $M_n$ in the mod-Gaussian convergence setting. Namely, consider be a sequence of i.i.d.~Bernoulli random variables $\left(\sigma_i\right)_{i\in\N}$ with $$\proba\left[\sigma_i=1\right]=1-\proba\left[\sigma_i=-1\right]=\frac{1}{2}$$
Then (see \cite[Theorem 8]{MN15}), $X_n=\frac{\sum_{i=1}^n \sigma_i}{n^{1/4}}$ is mod-Gaussian convergent in $\leb^1(\R)$ with parameters $t_n=\sqrt{n}$ and limiting function $\psi(x)=\exp(-\frac{x^4}{12})$. The sequence $Y_n$ obtained by the exponential change of measure of Theorem \ref{thm:changeofmeasure} has the same distribution as the rescaled total magnetization in the Curie--Weiss model, that is
$$Y_n\stackrel{\mathrm{law}}{=}\frac{M_n}{n^{1/4}}.$$ 
Note that, in this particular case,
$$\psi_n(\xi)=\E^{-\sqrt{n}\frac{x^2}{2}}\left(\cosh\left(\frac{x}{n^{1/4}}\right)\right)^n,\qquad \psi(x)=\E^{-\frac{x^4}{12}}.$$
From Proposition 15 in \cite{MN15}, we have that for all $M>0$
$$\sup_{n\in\N}\sup_{m\in[-M,M]}\left(\int_\R|\psi_n(x+\I m)| \DD{x}\right)\lesssim C(M)$$
with 
$$
C(M)=\E^{\frac{13M^4}{12}}\left( 2\sqrt{3}M+I_\infty\right),$$
and where $\lesssim$ means that the inequality holds up to a multiplicative constant $(1+\varepsilon)$, with $\varepsilon>0$, and for $n$ big enough.
Therefore, Conditions \ref{hyp:change1} and \ref{hyp:change3} are verified, and we can deduce from Theorem  \ref{thm:locallimitchange} the following local limit theorem: for any $\varepsilon\in (0,\frac{1}{2}]$, the following local limit theorem holds:
$$\lim_{n \to \infty} n^{\varepsilon}\,\,\proba\!\left[\frac{M_n}{n^{3/4}}-x\in \frac{B}{n^{\varepsilon}}\right] = \frac{\E^{-\frac{x^4}{12}}}{\int_{\R}\E^{-\frac{y^4}{12}}\DD{y}}\,m(B)$$
for any Jordan measurable subset $B$ with $m(B)>0$.
This improves on \cite[Theorem 22]{MN15}, which only dealt with the case $x=0$ and $\varepsilon = \frac{1}{2}$. In the same setting, one can also show that the Kolmogorov distance between $\frac{Y_n}{t_n}$ and its limit in law $W_\infty$ is a $O(\|\psi_n-\psi\|_{\leb^1(\R)}+\frac{1}{t_n})$, see Theorem 21 in \emph{loc.~cit.}
\end{example}

\bibliographystyle{alpha}
\bibliography{local_limit}

\end{document}